\documentclass[a4paper]{article}

\title{Untwisting the twisted character variety via a stacky curve}
\author{Anna Borri}
\date{}

\usepackage[T1]{fontenc} 

\usepackage[utf8]{inputenc} 

\usepackage[main=english]{babel} 

\usepackage[style=alphabetic]{biblatex}
\usepackage{csquotes}

\usepackage{mathpazo} 

\usepackage{indentfirst} 

\usepackage[scale=0.71]{geometry}

\usepackage{microtype}

\usepackage{graphicx}

\usepackage{tikz-cd}
\usetikzlibrary{arrows}

\usepackage{quiver}

\usepackage{tikz}
\usetikzlibrary{calc, arrows.meta, positioning, patterns}

\usepackage{amsfonts,amssymb}

\usepackage{mathrsfs}

\usepackage{stmaryrd}

\usepackage{amsmath}

\usepackage{mathtools}
\mathtoolsset{showonlyrefs,showmanualtags} 
\numberwithin{equation}{section} 

\usepackage{amsthm}

\usepackage{float}

\usepackage{adjustbox}

\usepackage{enumitem}

\usepackage{verbatim}

\usepackage{comment}

\usepackage{zref-clever} 
\zcsetup{capfirst} 
\usepackage{refcount}

\usepackage{hyperref} 
\hypersetup{
  colorlinks,
  citecolor=magenta,
  linkcolor=black,
  urlcolor=blue}

\AddToHook{env/thm/begin}{%
\zcsetup{countertype={thm=theorem}}}

\AddToHook{env/defi/begin}{%
\zcsetup{countertype={thm=definition}}}

\AddToHook{env/prop/begin}{%
\zcsetup{countertype={thm=proposition}}}

\AddToHook{env/lemma/begin}{%
\zcsetup{countertype={thm=lemma}}}

\AddToHook{env/cor/begin}{%
\zcsetup{countertype={thm=corollary}}}

\AddToHook{env/rmk/begin}{%
\zcsetup{countertype={thm=remark}}}

\newlist{thmlist}{enumerate}{1}
\setlist[thmlist]{label=\upshape(\roman*),ref=\thethm(\roman*)}

\newlist{proplist}{enumerate}{1}
\setlist[proplist]{label=\upshape(\roman*),ref=\theprop(\roman*)}

\zcsetup{
  font=\upshape,
  countertype = {
    thmlisti   = theorem ,
    thmlistii  = theorem ,
    thmlistiii = theorem ,
    thmlistiv  = theorem ,
  } ,
  counterresetby = {
    thmlistii  = thmlisti ,
    thmlistiii = thmlistii ,
    thmlistiv  = thmlistiii ,
  } ,
  countertype = {
    proplisti   = proposition ,
    proplistii  = proposition ,
    proplistiii = proposition ,
    proplistiv  = proposition ,
  } ,
  counterresetby = {
    proplistii  = proplisti ,
    proplistiii = proplistii ,
    proplistiv  = proplistiii ,
  } ,
}
\ExplSyntaxOn
\cs_new_protected:Nn \gaussler_extract_subref:n
  {
    \__gaussler_extract_last:n #1
  }
\cs_new:Nn \__gaussler_extract_last:n
  {
    \__gaussler_extract_last:w #1 () \q_stop
  }
\cs_generate_variant:Nn \gaussler_extract_subref:n {e}
\cs_new:Npn \__gaussler_extract_last:w #1 (#2) #3 \q_stop
 {
  \tl_if_empty:nTF {#2} {\textbf{??}} {(#2)}
 }

\NewDocumentCommand{\localref}{m}
  {
    \hyperref [#1] { \textup { \gaussler_extract_subref:e { \getrefnumber{#1} } } }
  }
\ExplSyntaxOff

\theoremstyle{definition}
\newtheorem{thm}{Theorem}[section]
\newtheorem{defi}[thm]{Definition}

\newtheorem{prop}[thm]{Proposition}
\newtheorem{lemma}[thm]{Lemma}
\newtheorem{cor}[thm]{Corollary}
\newtheorem{rmk}[thm]{Remark}

\newtheorem*{thm*}{Theorem}
\newtheorem*{defi*}{Definition}

\newtheorem{introthm}{Theorem}       
\DeclareMathOperator{\Hom}{Hom}

\DeclareMathOperator{\Aut}{Aut}

\DeclareMathOperator{\qcoh}{QCoh}
\DeclareMathOperator{\Spec}{Spec}

\DeclareMathOperator{\rank}{rk}

\DeclareMathOperator{\Higgs}{Higgs}

\DeclareMathOperator{\id}{id}
\DeclareMathOperator*{\holim}{holim}

\newcommand{\inHom}{\underline{\Hom}}

\newcommand{\Cb}{\mathbb{C}}

\newcommand{\Zb}{\mathbb{Z}}
\newcommand{\Qb}{\mathbb{Q}}

\newcommand{\Ab}{\mathbb{A}}
\newcommand{\Gb}{\mathbb{G}}
\newcommand{\Sb}{\mathbb{S}}
\newcommand{\Db}{\mathbb{D}}

\newcommand{\mc}[1]{\mathcal{#1}}
\renewcommand{\O}{\mc{O}}
\newcommand{\shHom}{\mc{H}\kern -.5pt om}
\newcommand{\shEnd}{\mc{E}\kern -.5pt nd}
\newcommand{\shExt}{\mc{E}\kern -.5pt xt}
\newcommand{\Bun}{\mc{B}\kern -.5pt un}

\newcommand{\Ec}{\mc{E}}

\newcommand{\Vc}{\mc{V}}

\newcommand{\Xc}{\mc{X}}

\newcommand{\mymid}{\;\middle|\;}
\newcommand{\dd}{\mathop{}\!\mathrm{d}} 
\DeclareMathSymbol{\mh}{\mathord}{operators}{`\-} 

\newcommand{\modd}{/\!\!/} 

\newcommand{\XX}{\mc{X}} 

\newcommand{\RH}{R\!H} 

\newcommand{\catname}[1]{\mathbf{#1}}

\newcommand{\Cov}{\catname{Cov}}
\newcommand{\Set}{\catname{Set}}
\newcommand{\SSet}{\catname{SSet}}

  \newcommand{\graphfragment}[1]{
    \begin{tikzpicture}[baseline=(v0.base), scale=0.8]
      \node[circle, fill, inner sep=1pt, label=below:{\tiny $v_0$}] (v0) at (0,0) {};
      \node[circle, fill, inner sep=1pt, label=below:{\tiny $v_1$}] (v1) at (0.8,0) {};
      \node[circle, fill, inner sep=1pt, label=below:{\tiny $v_n$}] (vn) at (1.8,0) {};
      \node[circle, fill, inner sep=1pt, label=below:{\tiny $v_0$}] (v0end) at (2.6,0) {};
      
      \draw[-{Stealth}] (v0) -- node[above] {\tiny $e_0$} (v1);
      \draw[dashed] (v1) -- (vn);
      \draw[-{Stealth}] (vn) -- node[above] {\tiny $e_n$} (v0end);
    \end{tikzpicture}
  }

\tikzcdset{
  every diagram/.append style={
    cells={font=\everymath\expandafter{\the\everymath\displaystyle}}
  }
}

\begin{document}
\maketitle

\begin{abstract}
Given a smooth projective curve $X$ over $\Cb$, we show that the twisted character variety of $X$ is equivalent to a component of the character variety of a stacky curve $\XX$, constructed by adding an orbifold point to $X$. This allows to interpret the non abelian Hodge correspondence in degree $d$ over $X$ as a statement about the geometry of the Hodge moduli space of $\lambda$-connections on $\XX$. Following Shende's approach (see arXiv:1411.4975), we exploit this interpretation to compute the weights of the tautological classes on the cohomology of the twisted character variety, by replacing $\XX$ with a simplicial scheme $\Delta_{\XX}$ obtained from a triangulation.
\end{abstract}

\section*{Introduction}
Simpson's non abelian Hodge correspondence establishes a homeomorphism between the moduli space of semistable Higgs bundles on a smooth projective curve $X$ and the character variety of the curve. This can be understood as relating the geometry of the generic fiber and the special fiber inside the moduli space of $\lambda$-connections.
Namely, for a complex parameter $\lambda$ and a vector bundle $\mc{E}$ on $X$, define a $\lambda$-connection on $\mc{E}$ to be a $\Cb$-linear operator
\begin{equation}
\nabla \colon \mc{E} \to \mc{E}\otimes \Omega_X
\end{equation}
satisfying the Leibniz rule up to the scalar $\lambda$. Explicitly, given sections $f \in \O_X(U)$ and $s \in \mc{E}(U)$ we have
\begin{equation}
\nabla(f \, s) = f \, \nabla(s) + \lambda \, s \otimes \dd f.
\end{equation}
The Hodge moduli space $\mc{M}_{n, Hdg}^0(X)$ parametrizes triples $(\mc{E}, \lambda, \nabla)$, where $\mc{E}$ is a rank $n$ and degree $0$ vector bundle on $X$ and $\nabla$ is a $\lambda$-connection on $\mc{E}$, subject to a stability condition.
The assignment
\begin{equation}
(\mc{E}, \lambda, \nabla) \mapsto \lambda
\end{equation}
gives $\mc{M}_{n, Hdg}^0(X)$ the structure of a family over $\Ab^1$, trivial over $\Gb_m$.
Notice that the generic fiber (identified with $\lambda = 1$) is the moduli space of vector bundles on $X$ with a (flat) connection, i.e. the De Rham moduli space $\mc{M}_{n, DR}^0(X)$. The special fiber is the moduli space of semistable Higgs bundles on $X$, i.e. the Dolbeault moduli space $\mc{M}_{n, Dol}^0(X)$.

The Riemann-Hilbert correspondence gives a complex-analytic isomorphism between the De Rham moduli space and the moduli space of rank $n$ local systems on $X$, which can be identified with the character variety
\begin{equation}
\mc{M}_{n, Betti}^0(X) \coloneqq \left[ \Hom(\pi_1(X), GL_n) / GL_n \right].
\end{equation}

In short, we have the following picture
\begin{equation} \label{eq: pullback diagram}
    \begin{tikzcd}[ampersand replacement=\&, column sep=large, row sep=huge, baseline=(current bounding box.center)]
        {\mc{M}_{n, Dol}^0(X)} \& {\mc{M}_{n,Hdg}^0(X)} \& {\mc{M}_{n, DR}^0(X)} \& {\mc{M}_{n,B}^0(X)} \\
        {\{ 0 \}} \& {\Ab^1} \& {\{ 1 \}} 
        \arrow[hook, from=1-1, to=1-2]
        \arrow[from=1-1, to=2-1]
        \arrow["\lrcorner"{anchor=center, pos=0.125}, draw=none, from=1-1, to=2-2]
        \arrow[from=1-2, to=2-2]
        \arrow[hook', from=1-3, to=1-2]
        \arrow["\cong", from=1-3, to=1-4]
        \arrow["\lrcorner"{anchor=center, pos=0.125, rotate=-90}, draw=none, from=1-3, to=2-2]
        \arrow[from=1-3, to=2-3]
        \arrow[from=2-1, to=2-2]
        \arrow[from=2-3, to=2-2]
    \end{tikzcd}
\end{equation}
and Simpson's correspondence shows that the analytic spaces underlying $\mc{M}_{n, Dol}^0(X)$ and $\mc{M}_{n, DR}^0(X)$ are homeomorphic. From an algebraic perspective, the main consequence is that these varieties have isomorphic cohomology rings. Indeed, we get that the inclusions of the fibers in the total space induce isomorphisms of cohomology rings
\begin{equation}
H^*(\mc{M}_{n, Dol}^0(X)) \cong H^*(\mc{M}_{n,Hdg}^0(X)) \cong H^*(\mc{M}_{n, DR}^0(X));
\end{equation}
and that the analytic isomorphism $\mc{M}_{n, DR}^0(X) \cong \mc{M}_{n,B}^0(X)$ induces an isomorphism
\begin{equation}
H^*(\mc{M}_{n, DR}^0(X)) \cong H^*(\mc{M}_{n,B}^0(X)).
\end{equation}

Notice that so far we have described only moduli spaces of degree $0$ vector bundles. Indeed, for a vector bundle to admit a flat connection it needs to have vanishing Chern classes, so the description given of $\mc{M}_{DR}$ only makes sense in degree $0$. However, the moduli space of Higgs bundles $\mc{M}_{Dol}$ can be constructed in the same way for any degree $d$. Classically, this incompatibility has been solved by replacing the De Rham and Betti moduli spaces with \textit{twisted} versions. Namely
$\mc{M}_{n, DR}^d(X)$
is the moduli space of semistable vector bundles with a logarithmic connection with a single pole $p \in X$ of multiplicity at most $n$ and residue $- \tfrac{2 \pi i d}{n}$, and
\begin{equation} \label{eq: twisted betti}
\mc{M}_{n, B}^d(X) = \left[ \bigg\{ (A_i, B_i)_{i = 1}^g \in GL_n^{2g} \mymid \prod_{i=1}^{g}[A_i, B_i] = \mathrm{e}^{-\frac{2 \pi \mathrm{i}}{n} d} Id \bigg\} / GL_n \right]
\end{equation}
is the \textit{twisted character variety}.

The main purpose of this article is to give an alternative description of $\mc{M}_{n, DR}^d(X)$ and $\mc{M}_{n, B}^d(X)$, showing that they can also be realized as generic and special fibers in a family of $\lambda$-connections, in analogy with \eqref{eq: pullback diagram}.

In order to do this, the main insight is that the meromorphic structure around the point $p \in X$ can be interpreted as to be holomorphic around an orbifold point and similarly the twist appearing in $\mc{M}_{n,B}^d(X)$ can be interpreted as monodromy around an orbifold point. Algebraically, the way to realize this orbifold structure is to construct a stacky curve $\XX$ as the $n$-th root stack of $X$ at the point $p$, i.e. as
\begin{equation}
    \begin{tikzcd}[ampersand replacement=\&, column sep=huge, row sep=huge]
            {\XX= \sqrt[n]{p/X} } \&\& {[\Ab^1/\Gb_m]} \\
            X \& {} \& {[\Ab^1/\Gb_m].}
            \arrow[from=1-1, to=1-3]
            \arrow["\pi"', from=1-1, to=2-1]
            \arrow["\lrcorner"{anchor=center, pos=0.125}, draw=none, from=1-1, to=2-2]
            \arrow["{\wedge n}", from=1-3, to=2-3]
            \arrow["{(\O(p), \; s_p)}"', from=2-1, to=2-3]
    \end{tikzcd}
\end{equation}
Since degree $0$ vector bundles on $\XX$ admit $\lambda$-connections, to construct a version of the Hodge moduli space for degree $d$ vector bundles on $X$ we show that these can be realized as degree $0$ vector bundles on $\XX$. Similarly, we show that the Betti moduli space $\mc{M}_{n, B}^d(X)$ can be realized as moduli space of representations of $\pi_1(\XX)$. In this way, comparing the generic and special fiber in the Hodge moduli space of degree $0$ vector bundles on $\XX$, we get a version of the diagram \eqref{eq: pullback diagram} for degree $d$ vector bundles on $X$.
In what follows we give a more detailed description of how this insight is formalized.

Vector bundles on $\XX$ have a third numerical invariant beyond rank and degree, namely the representation obtained as stalk at the stacky point. We denote by $w(-d)$ the weight $-d$ character of $\mu_n$ and by $n \cdot w(-d)$ the corresponding $n$-dimensional representation. Then the main result comparing moduli spaces on $X$ and on $\XX$ is the following.
\begin{introthm}\label{intro thm: equivalence Higgs bundles}
The twisted pullback and pushforward along the moduli map $\mc{X} \to X$ define equivalences of algebraic stacks
\begin{equation}
\mc{M}_{n, Dol}^d(X) \cong \mc{M}_{n \cdot w(-d), Dol}^0(\XX).
\end{equation}
\end{introthm}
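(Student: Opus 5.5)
The plan is to build the equivalence from the well-known description of vector bundles on the root stack $\XX = \sqrt[n]{p/X}$ as parabolic bundles on $(X,p)$, and then to check that Higgs fields and semistability match. Write $\mc{L}$ for the tautological line bundle on $\XX$, so that $\mc{L}^{\otimes n} \cong \pi^*\O(p)$ and $\mu_n$ acts on the fibre of $\mc{L}$ at the stacky point with weight $1$. The twisted pullback of a bundle $\mc{E}$ on $X$ of rank $n$ and degree $d$ is
\begin{equation}
\mc{E} \mapsto \pi^*\mc{E} \otimes \mc{L}^{-d}.
\end{equation}
Its rank is $n$ and its degree is $d - n\cdot \tfrac{d}{n} = 0$, since $\deg \mc{L} = 1/n$. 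The stalk at the stacky point is $n$ copies of $w(-d)$, because $\pi^*\mc{E}$ carries the trivial action there. The inverse functor is the twisted pushforward $\Vc \mapsto \pi_*(\Vc \otimes \mc{L}^{d})$.

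\textbf{Step 1: the underlying bundles.} I will show that these two functors are mutually inverse on families. That is, for any $\Vc$ on $\XX \times S$ whose fibres have uniform stalk type $n\cdot w(-d)$, the counit $\pi^*\pi_*(\Vc\otimes\mc{L}^d) \to \Vc\otimes\mc{L}^d$ is an isomorphism. This can be checked étale locally near $p$, where $\XX$ looks like $[\Spec \Cb[t]/\mu_n]$ with $t^n = z$. A $\mu_n$-equivariant module is a direct sum of weight spaces. Since $\Vc\otimes\mc{L}^d$ has only trivial weight at the origin, it is generated by its invariants. So it is pulled back from $\Spec\Cb[z]$, and $\pi_*$ takes $\mu_n$-invariants, which is exact. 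The other composite is the identity because $\pi_*\O_\XX = \O_X$ (projection formula). This holds in families over any base $S$, so it gives an equivalence of the stacks of bundles, $\Bun^d_n(X)\cong \Bun_{n\cdot w(-d)}^0(\XX)$.

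\textbf{Step 2: Higgs fields.} On $\XX$ a Higgs field is a map $\Vc \to \Vc\otimes\Omega_\XX$. A Higgs field is unchanged by tensoring with a line bundle, so it suffices to compare $\Hom(\pi^*\mc{E},\pi^*\mc{E}\otimes\Omega_\XX)$ with $\Hom(\mc{E},\mc{E}\otimes\Omega_X)$. Here $\Omega_\XX = \pi^*\Omega_X\otimes\mc{L}^{n-1}$, since $dz = n t^{n-1}dt$. Then
\begin{equation}
\pi_*(\pi^*\shEnd(\mc{E})\otimes\Omega_\XX)=\shEnd(\mc{E})\otimes\Omega_X\otimes\pi_*\mc{L}^{n-1}.
\end{equation}
The invariant part of $t^{n-1}\Cb[t]$ is $z\Cb[z]$, so $\pi_*\mc{L}^{n-1}\cong\O_X$ (it is $\O(p)\otimes\O(-p)$). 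Locally this is just the statement that invariant sections of $t^{n-1}\Cb[t]\,dt$ are $\Cb[z]\,dz$. Hence Higgs fields correspond bijectively, functorially in families.

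\textbf{Step 3: semistability, the main obstacle.} I need $\mc{E}$ to be semistable if and only if $\Vc = \pi^*\mc{E}\otimes\mc{L}^{-d}$ is semistable, with the slope on $\XX$ using the rational degree. Subsheaves of $\Vc$ do not all come from pullbacks: a Higgs-invariant saturated subsheaf $\mc{F}\subset\Vc$ can have mixed weights. To handle this, I will show that every such $\mc{F}$ is contained in $\pi^*\mc{F}'\otimes\mc{L}^{-d}$ with the same rank, where $\mc{F}'=\pi_*(\mc{F}\otimes\mc{L}^d)$. Writing $r = \rank\mc{F}$, this gives
\begin{equation}
\deg\mc{F} \le \deg\mc{F}' - \tfrac{d}{n}\, r,
\end{equation}
while $\mu(\Vc)=0$. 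So $\mu(\mc{F})\le 0$ follows from $\mu(\mc{F}')\le d/n=\mu(\mc{E})$. Conversely, a destabilizing $\mc{F}'\subset\mc{E}$ pulls back to a destabilizing subsheaf of $\Vc$. The delicate point is the degree comparison for non-pullback subsheaves. I will do it via the parabolic weights formula $\deg\mc{F} = \deg\pi_*\mc{F} + \sum(\text{weights})/n$, checking the sign at $p$ explicitly.
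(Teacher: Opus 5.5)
Your Steps 1 and 2 follow the paper's route. The paper also identifies $\Bun_n^d(X)$ with $\Bun^0_{n\cdot w(-d)}(\XX)$ through $\pi^*$, $\pi_*$ and a twist by the tautological bundle. It transports Higgs fields via $\Omega_{\XX}\cong\pi^*\Omega_X\otimes\O(\tfrac{n-1}{n}p)$ and $\pi_*\O(\tfrac{n-1}{n}p)\cong\O_X$. Two side remarks: your inverse $\pi_*(\Vc\otimes\mc{L}^{d})$ has the correct sign. Also, locally $\mc{L}^{n-1}$ is $t^{1-n}\Cb[t]$, with invariants $\Cb[z]$, whereas $t^{n-1}\Cb[t]$ models $\mc{L}^{1-n}$; your conclusion $\pi_*\mc{L}^{n-1}\cong\O_X$ is still right. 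You are also right that semistability needs an argument: the paper only says it is preserved because rank and degree are. However, your Step 3 has a genuine gap.

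The premise that a saturated Higgs-invariant $\mc{F}\subset\Vc$ can have mixed weights at $p$ is false. Moreover, the mechanism you propose for that case would fail if the case occurred. Put $\mc{G}=\mc{F}\otimes\mc{L}^{d}\subset\pi^*\mc{E}$ and $\mc{F}'=\pi_*\mc{G}$.

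\begin{itemize}
\item The natural map is the counit $\pi^*\pi_*\mc{G}\to\mc{G}$, so in general $\pi^*\mc{F}'\otimes\mc{L}^{-d}\subseteq\mc{F}$, not the reverse. For example, when $n\ge 2$ the subsheaf $\mc{L}^{-1}\subset\O_{\XX}$ has $\pi^*\pi_*\mc{L}^{-1}=\mc{L}^{-n}\subsetneq\mc{L}^{-1}$.
\item The weight formula $\deg\mc{G}=\deg\pi_*\mc{G}+\tfrac1n\sum_{i=0}^{n-1}i\,m_i$ gives $\deg\mc{G}\ge\deg\mc{F}'$, with equality only when all weights are trivial. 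That is the opposite of the inequality you need.
\end{itemize}

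So for a genuinely mixed-weight subsheaf your bound would be false, and no sign check at $p$ can rescue it.

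The missing observation is that this case never arises. If $\mc{F}$ is saturated, then $\Vc/\mc{F}$ is torsion-free, hence locally free on the smooth curve $\XX$. Therefore $0\to\mc{F}\to\Vc\to\Vc/\mc{F}\to 0$ stays exact after restriction to $B\mu_n$. So $\mc{F}|_{B\mu_n}$ is a subrepresentation of $w(-d)^{\oplus n}$, i.e.\ $\underline{m}(\mc{F})=r\cdot w(-d)$. It then follows that:

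\begin{itemize}
\item by Step 1, $\mc{F}=\pi^*\mc{F}'\otimes\mc{L}^{-d}$ on the nose;
\item $\mc{F}'\subset\mc{E}$ is a Higgs subsheaf, by applying Step 2 to the sub-Higgs bundle $(\mc{G},\eta|_{\mc{G}})$;
\item $\mu(\mc{F})=\mu(\mc{F}')-\tfrac dn$.
\end{itemize}

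Hence saturated Higgs subsheaves of $\Vc$ and of $\mc{E}$ correspond bijectively, with slopes shifted by $\tfrac dn$. This is what actually justifies the paper's ``preserved trivially'', and it closes your Step 3 without any parabolic degree estimate.
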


Passing to the study of the Betti moduli space, a first observation is that the replacement of $p$ by an orbifold point changes the fundamental group of $X$ by adding a cyclic monodromy element. Namely, we get
\begin{equation} \label{introeq: fundamental group of stacky curve}
\pi_1(\Xc) = \left\langle a_i, \, b_i, \, c \mymid \prod_{i = 1}^g [a_i, b_i] = c, \; c^n = 1 \right\rangle .
\end{equation}
Then we get the twisted character variety thanks to the following theorem.
\begin{introthm} \label{intro thm: twisted character variety}
The connected component of the Betti moduli stack of $\mc{X}$ corresponding to the numerical invariant $n \cdot w(-d)$ is isomorphic to the twisted Betti moduli space of $X$, i.e. we have
\begin{equation}
\mc{M}_{n \cdot w(-d), B}^0(\mc{X}) = \left[ \bigg\{ (A_i, B_i)_{i = 1}^g \in GL_n^{2g} \mymid \prod_{i=1}^{g}[A_i, B_i] = \mathrm{e}^{-\frac{2 \pi \mathrm{i}}{n} d} Id \bigg\} / GL_n \right].
\end{equation}
\end{introthm}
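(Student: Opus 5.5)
The Betti moduli stack of $\XX$ is the stack of representations $[\Hom(\pi_1(\XX), GL_n)/GL_n]$. The approach is to write it out using the presentation \eqref{introeq: fundamental group of stacky curve}, identify the numerical invariant $n\cdot w(-d)$ with a condition on the image of $c$, and show that this condition forces $c \mapsto \mathrm{e}^{-\frac{2\pi\mathrm{i}}{n}d}\,Id$.

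First, by the presentation, a representation is a tuple $(A_i,B_i,C)$ in $GL_n^{2g+1}$ with $\prod_{i}[A_i,B_i]=C$ and $C^n=Id$. Eliminating $C$, the representation variety is the closed subvariety of $GL_n^{2g}$ on which $\prod[A_i,B_i]$ has order dividing $n$. The element $C$ has finite order, so it is semisimple with eigenvalues in $\mu_n$. Its conjugacy class is therefore determined by the multiplicities of the characters $\zeta\mapsto \zeta^k$, that is, by an $n$-dimensional representation of $\mu_n=\langle c\rangle$. This conjugacy class is locally constant in families: the characteristic polynomial of $C$ varies continuously but takes only finitely many values, namely products of $(t-\zeta)$ with $\zeta\in\mu_n$. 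Hence the representation variety splits as a disjoint union of open and closed pieces indexed by $\mu_n$-representations $\rho$ of dimension $n$. Each piece is $GL_n$-stable, so the quotient stack splits the same way.

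Second, I will match this decomposition with the numerical invariant on the Betti side, namely the stalk representation at the stacky point. The local system attached to a representation restricts, near the orbifold point, to a local system on $B\mu_n$ (or on the punctured disc with monodromy $c$). Its fiber as a $\mu_n$-representation is exactly $\mathbb{C}^n$ with $c$ acting by $C$. I expect this to be the main obstacle: the sign and normalization conventions must be made to agree. The weight $-d$ of $w(-d)$ has to correspond to the eigenvalue $\mathrm{e}^{-2\pi\mathrm{i}d/n}$ of the loop $c$ around $p$. This is where the paper's conventions for the identification of $\pi_1$ of the root stack, and for the induced monodromy, must be fixed. In particular, the generator of $\mu_n$ at the stacky point has to be matched to the loop $c$ with the correct orientation. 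I would check this on the local model $[\Ab^1/\mu_n]$ with the line bundle of weight $-d$ and its flat structure. Once the generator $c$ is identified with $\mathrm{e}^{2\pi\mathrm{i}/n}\in\mu_n$ acting on the stalk, the isotypic decomposition coincides with the invariant.

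Third, for $\rho=n\cdot w(-d)$ the group $\mu_n$ acts through the single character $w(-d)$ with multiplicity $n$. Then $C$ is diagonalizable with all eigenvalues equal to $\mathrm{e}^{-2\pi\mathrm{i}d/n}$, so $C=\mathrm{e}^{-2\pi\mathrm{i}d/n}Id$. Conversely, any tuple with $\prod[A_i,B_i]$ equal to this scalar satisfies $C^n=Id$ and lies in this component. Thus the component is the closed subvariety $\{\prod[A_i,B_i]=\mathrm{e}^{-\frac{2\pi\mathrm{i}}{n}d}Id\}$ with the conjugation action of $GL_n$. This gives the stated isomorphism of quotient stacks, and the isomorphism is compatible with the $GL_n$-actions because $C$ is determined by the $A_i,B_i$.
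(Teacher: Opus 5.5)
Your proposal is correct and follows essentially the paper's route. The presentation of $\pi_1(\XX)$ turns $\mc{M}_{n,B}(\XX)$ into a quotient of the variety of tuples $(A_i,B_i,C)$. The step you single out as the main obstacle is exactly the paper's proposition identifying the monodromy of $c$ with the stalk representation $\mc{E}\vert_{B\mu_n}$. The paper proves it by pulling back along the atlas $\Db \to [\Db/\mu_n]$, which is the universal cover: there the flat bundle becomes $(\O^n,\dd)$ with $\mu_n$ acting by constant matrices, and evaluation at $0$ identifies the two representations.

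Your extra care is an improvement, not a deviation. The paper never fixes the orientation of $c$, though the opposite convention would only replace $\mathrm{e}^{-\frac{2\pi\mathrm{i}}{n}d}$ by its inverse, giving an isomorphic stack. Also, indexing the pieces by the conjugacy class of $C$ is more accurate than the paper's general formula $\prod[A_i,B_i]=C_{\underline{m}}$ modulo $GL_n$. That locus is $GL_n$-stable only when $C_{\underline{m}}$ is scalar, which is the case for $n\cdot w(-d)$.
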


Using $\lambda$-connections, we can again construct a family over $\Ab^1$ whose special and generic fiber are the Dolbeault and De Rham moduli spaces. Explicitly, we get the following diagram
\begin{equation}
\begin{tikzcd}[ampersand replacement=\&, column sep=large, row sep=huge]
        {\mc{M}_{n \cdot w(-d), Dol}^0(\XX)} \& {\mc{M}_{n \cdot w(-d),Hdg}^0(\XX)} \& {\mc{M}_{n \cdot w(-d), DR}^0(\XX)} \& {\mc{M}_{n \cdot w(-d),B}^0(\XX)} \\
        {\{ 0 \}} \& {\Ab^1} \& {\{ 1 \},}
        \arrow[hook, from=1-1, to=1-2]
        \arrow[from=1-1, to=2-1]
        \arrow["\lrcorner"{anchor=center, pos=0.125}, draw=none, from=1-1, to=2-2]
        \arrow[from=1-2, to=2-2]
        \arrow[hook', from=1-3, to=1-2]
        \arrow["\cong", from=1-3, to=1-4]
        \arrow["\lrcorner"{anchor=center, pos=0.125, rotate=-90}, draw=none, from=1-3, to=2-2]
        \arrow[from=1-3, to=2-3]
        \arrow[from=2-1, to=2-2]
        \arrow[from=2-3, to=2-2]
    \end{tikzcd}
\end{equation}
where the isomorphism
\begin{equation}
\mc{M}_{n \cdot w(-d), DR}^0(\XX) \xlongrightarrow{\cong} \mc{M}_{n \cdot w(-d),B}^0(\XX)
\end{equation}
is complex analytic but not algebraic. As before, we get that the inclusions of the fibers in the total space induce isomorphisms of cohomology rings
\begin{equation}
H^*(\mc{M}_{n \cdot w(-d), Dol}^0(\XX)) \cong H^*(\mc{M}_{n \cdot w(-d),Hdg}^0(\XX)) \cong H^*(\mc{M}_{n \cdot w(-d), DR}^0(\XX));
\end{equation}
and that the analytic isomorphism $\mc{M}_{n \cdot w(-d), DR}^0(\XX) \cong \mc{M}_{n \cdot w(-d),B}^0(\XX)$ induces an isomorphism
\begin{equation}
H^*(\mc{M}_{n \cdot w(-d), DR}^0(\XX)) \cong H^*(\mc{M}_{n \cdot w(-d),B}^0(\XX)).
\end{equation}

This approach has the further advantage of simplifying the calculation of the Hodge weights of the tautological classes on $\mc{M}_{n, B}^d(X)$. In \cite[]{Shende-weights}, Shende computed the weights of the tautological classes on the character variety of a topological space by replacing the space by a triangulation and then realizing this triangulation as a simplicial scheme. He then computed the weights also in the case of twisted character varieties, using a reduction to the case of $GL_n$- and $PGL_n$-character varieties presented in \cite[\S 2.2]{MixedHodgePolynomials}. The following is the result of this computation.
\begin{thm}[Shende] \label{thm shende}
Under the isomorphism $ H^*(\mc{M}_{n, DR}^0(X)) \cong H^*(\mc{M}_{n, B}^0(X))$, the Künneth components $e_{i,j}$ of the $i$-th Chern class of the universal bundle $c_i(\mc{E}_{\text{univ}})$ are mapped to cohomology classes of weight $2i$. The same holds true in the twisted case, under the isomorphism $ H^*(\mc{M}_{n, DR}^d(X)) \cong H^*(\mc{M}_{n, B}^d(X))$.
\end{thm}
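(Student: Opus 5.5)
The untwisted statement is Shende's simplicial argument. The twisted case will be deduced from it by passing to the stacky curve $\XX$. The plan has three steps.

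\textbf{Step 1: the untwisted case.} Choose a finite triangulation of $X$ and let $\Delta_X$ be the associated simplicial scheme, with a point $\Spec \Cb$ for each vertex, edge and face. A rank $n$ local system on $X$ is the same as a cartesian $GL_n$-torsor on $\Delta_X$. This identifies $\mc{M}_{n,B}^0(X)$ with an algebraic stack $\mathrm{Loc}_n(\Delta_X)$ built from products of copies of $GL_n$ and $BGL_n$, one for each simplex. The universal local system then becomes a simplicial family of $GL_n$-torsors over $\mathrm{Loc}_n(\Delta_X)\times \Delta_X$, and its Chern classes can be computed simplicially. On the $k$-simplices the universal bundle is pulled back from $BGL_n$ through the evaluation maps. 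The Künneth component $e_{i,j}$ is therefore obtained as follows: take $c_i\in H^{2i}(BGL_n)$, which is pure of weight $2i$; pull it back along these algebraic maps; and pair it with a cycle in $H_j$ of the triangulation. That pairing is a combinatorial, hence weight-zero, operation. Pullback along algebraic maps preserves weights, and the homotopy colimit spectral sequence is compatible with mixed Hodge structures because it is realized by a simplicial variety. So $e_{i,j}$ has weight $2i$.

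Two compatibilities remain in this step. First, the algebraically defined classes must match those transported from $\mc{M}_{DR}$ under Riemann--Hilbert. Second, the simplicial universal bundle must agree topologically with the universal flat bundle. Both hold because the underlying topological $GL_n$-bundles coincide.

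\textbf{Step 2: the twisted case via $\XX$.} By the Theorem identifying $\mc{M}^0_{n\cdot w(-d),B}(\XX)$ with the twisted character variety, it suffices to treat a component of the Betti stack of $\XX$. I will run the argument of Step 1 with $X$ replaced by a simplicial model $\Delta_{\XX}$. To build it, triangulate $X$ with $p$ as a vertex and replace that vertex by $B\mu_n$, so that the link of the vertex carries the monodromy $c$ with $c^n=1$. Concretely, $\Delta_{\XX}$ is an orbi-simplicial scheme whose homotopy type is the classifying space of $\pi_1(\XX)$ near $p$. Local systems on $\Delta_{\XX}$ of type $n\cdot w(-d)$ are $GL_n$-torsors on the ordinary simplices together with, at $p$, a torsor for the centralizer of $c$ acting by $\zeta^{-d}$, which is $GL_n$ itself. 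Hence the same weight argument applies verbatim: every class is pulled back from $BGL_n$ (and $B\mu_n$, which has no rational cohomology) and paired with combinatorial cycles.

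\textbf{Step 3: comparing classes across the equivalence.} Finally, the Künneth components of $c_i$ of the universal bundle on $\mc{M}^d_{n,DR}(X)\times X$ must correspond to those on the $\XX$ side. This follows from the equivalence of moduli via twisted pullback and pushforward along $\pi\colon\XX\to X$. Rationally $H^*(\XX)\cong H^*(X)$, and twisting by $\O(p)^{d/n}$ changes $c(\mc{E})$ only by multiplication with a class pulled back from $X$. Hence each $e_{i,j}$ is a combination of the classes on $\XX$ with coefficients of weight zero, up to terms $e_{i',j'}$ with $i'<i$ that also have weight $2i'$. These lower terms are multiplied by powers of the point class, which have the correct degree shift, so the total weight is $2i$.

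\textbf{Main obstacle.} I expect the hardest part to be making $\Delta_{\XX}$ and the cartesian-torsor description rigorous at the stacky vertex. The simplicial homotopy colimit has to compute the cohomology of $\XX$ with mixed Hodge structure, and the resulting Chern classes have to agree with the analytic universal bundle under the Riemann--Hilbert map. I would first try to construct $\Delta_{\XX}$ as the Čech nerve of an étale atlas adapted to the triangulation, which gives descent for free.
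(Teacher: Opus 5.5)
Your Steps 1 and 2 follow the paper's route. The paper also realizes the twisted character variety as the $n\cdot w(-d)$-component of $\mc{M}_B(\XX)$ and builds a simplicial set $\Delta_{\XX}$ with $\pi_1(\Delta_{\XX})\cong\pi_1(\XX)$. It then identifies $\mc{M}_{n,B}(\XX)\cong\Hom_{SSch}(\Delta_{\XX},BGL_n)$, and concludes because the universal map to $BGL_n$ is algebraic while the constant simplicial scheme $\Delta_{\XX}$ has cohomology of weight zero.

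The genuine problem is Step 3. Write $\mc{F}=\pi^*E\otimes\O(-\tfrac{d}{n}p)$ for the universal bundle on the $\XX$ side. Rationally $c_1(\O(-\tfrac dn p))=-\tfrac dn[pt]$ with $[pt]^2=0$, so
\[
c_i(\mc F)=c_i(E)-\tfrac{d(n-i+1)}{n}\,c_{i-1}(E)\cdot[pt].
\]
On Künneth components this gives
\[
e^X_{i,2i-2}=e^{\XX}_{i,2i-2}+\tfrac{d(n-i+1)}{n}\,e^{\XX}_{i-1,2i-2},
\]
and all other components agree. On the Betti side the correction term has weight $2i-2$, not $2i$. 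Your appeal to the point class conflates degree with weight. The point class lives on the curve factor, which is discarded when you read off the component in $H^{2i-2}$ of the moduli space. On the Betti side it also comes from the weight-zero model $\Delta_{\XX}$, so it has degree $2$ but weight $0$. The simplest instance is $i=1$: $e^X_{1,0}=d$ is a nonzero constant in $H^0$, of weight $0$ rather than $2$. So for the literal universal bundle on $M\times X$ the twisted claim fails, and the comparison you attempt cannot succeed. The paper avoids this by only ever working with the tautological classes of $\XX$ and transporting them through the twisted pullback. Implicitly, the twisted tautological classes are those of the normalized degree-zero bundle $\pi^*E\otimes\O(-\tfrac dn p)$. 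With that convention your Step 2 already finishes the proof, and the class comparison in Step 3 should be replaced by this definition.

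Second, your proposed resolution of the main obstacle points the wrong way. A Čech nerve of an étale atlas of $\XX$ is not a constant simplicial scheme: its cohomology carries the weights $0,1,2$ of $\XX$. Moreover, maps from it to $BGL_n$ are vector bundles on $\XX$ trivialized on the atlas, not local systems. Shende's Künneth argument would then return weight $j$, which is the Dolbeault answer. The model must \emph{not} compute the mixed Hodge structure of $\XX$; it only needs to be combinatorial, with the homotopy type (in particular $\pi_1$) of $\XX$. The paper achieves this in four steps:
\begin{enumerate}
\item take a $\mu_n$-equivariant triangulation $D$ of the closed disk and form the homotopy quotient $\Delta_{[D/\mu_n]}$, which has $\pi_1=\mu_n$ because $D$ is contractible;
\item use that $\mu_n$ acts freely on $\partial D$, so $\Delta_{[\partial D/\mu_n]}\simeq\Delta_{\partial D/\mu_n}$;
\item glue to a triangulation of $X\setminus\Db$ with matching boundary;
\item check $\pi_1$ by simplicial Seifert--Van Kampen.
\end{enumerate}
Your ``replace the vertex by $B\mu_n$'' is the homotopy-theoretic shadow of this construction, and it becomes correct once made precise in this way.
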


Our approach gives an alternative direct proof for the twisted case, which doesn't use the case of $PGL_n$-bundles. Indeed, using Theorem~\ref{intro thm: twisted character variety} we see that the twisted character variety can be realized as an actual variety of local systems on $\XX$. Therefore Shende's argument can be extended directly to cover this case. The main ingredient is the construction of a simplicial scheme $\Delta_{\XX}$ which realizes a triangulation of the stacky curve $\XX$. In the article we explain in detail the construction of this simplicial decomposition. Once this is done, the argument amounts to using the equivalence
\begin{equation}
\mc{M}_{n, B}^0(\XX)=\left[ \text{Hom}(\pi_1(\XX), GL_n) / GL_n \right] \cong \shHom_{SSch}(\Delta_{\XX}, BGL_n)
\end{equation}
and the universal object on $\shHom_{SSch}(\Delta_{\XX}, BGL_n)$ to carry out the computation of the weights. Using this argument we get the following theorem.
\begin{introthm}
The cohomology classes on $\mc{M}_{n, B}^0(\XX)$ obtained by pulling back the $i$-th tautological classes on $\mc{M}_{n, DR}^0(\XX)$ have pure weight $2i$.
\end{introthm}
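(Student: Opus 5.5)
The plan is to follow Shende's argument, using Theorem~\ref{intro thm: twisted character variety} to replace the twisted character variety by an honest moduli of local systems on $\XX$.

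\textbf{Step 1: a simplicial model of $\XX$.} Triangulate $X$ so that $p$ is a vertex. Build a simplicial scheme $\Delta_{\XX}$ from the nerve of the triangulation, with the vertex $p$ replaced by the classifying object $B\mu_n$. Concretely, $\Delta_{\XX}$ is a homotopy pushout of a triangulated punctured surface and $B\mu_n$ along a circle, where the boundary loop is sent to the generator of $\mu_n$. This produces the presentation \eqref{introeq: fundamental group of stacky curve} of $\pi_1(\XX)$, and it yields an equivalence
\begin{equation}
\left[ \Hom(\pi_1(\XX), GL_n)/GL_n \right] \cong \shHom_{SSch}(\Delta_{\XX}, BGL_n).
\end{equation}
Restricting to the component with numerical invariant $n\cdot w(-d)$ gives $\mc{M}^0_{n\cdot w(-d),B}(\XX)$. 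Since $\mu_n$ is finite and $\Cb$ has characteristic zero, mapping stacks from $B\mu_n$ to $BGL_n$ are disjoint unions of $[\mathrm{pt}/Z_{GL_n}(\rho)]$. So the stacky vertex contributes only a finite-group twist, and no new weights appear in cohomology.

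\textbf{Step 2: the universal bundle and the tautological classes.} On $\shHom(\Delta_{\XX},BGL_n)\times\Delta_{\XX}$ there is an evaluation map to $BGL_n$. Pulling back the universal Chern classes $c_i\in H^{2i}(BGL_n)$, which are of pure weight $2i$ by Deligne, gives $c_i$ of a universal object. Its Künneth components along $H_*(\Delta_{\XX})$ are algebraic operations in the simplicial (Deligne) mixed Hodge theory, so they lie in the weight-$2i$ part. Each simplicial level of $\Delta_{\XX}$ is a finite union of points or $B\mu_n$, whose rational cohomology is that of a point and has weight $0$. The slant product with a homology class of $\Delta_{\XX}$ therefore does not shift weight; there is no Tate twist, unlike the Dolbeault side.

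\textbf{Step 3: matching with the De Rham tautological classes.} We must check that, under Riemann--Hilbert, the universal bundle on $\mc{M}^0_{DR}(\XX)\times\XX$ corresponds topologically to the universal local system on the Betti side. The classes pulled back from $\mc{M}^0_{DR}(\XX)$ are then the classes of Step~2, after identifying $H^*(\XX,\Qb)\cong H^*(X,\Qb)\cong H^*(|\Delta_{\XX}|)$. Here we use that the coarse map is a rational homology equivalence. Purity, meaning weight exactly $2i$ and not just at most $2i$, follows because these classes arise by pullback from the pure classes of $BGL_n$.

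\textbf{Main obstacle.} I expect Step~1 to be the hardest part. One has to make the simplicial scheme with a $B\mu_n$ vertex precise and prove that its mapping stack really recovers $[\Hom(\pi_1(\XX),GL_n)/GL_n]$, including the correct component $n\cdot w(-d)$. My first attempt would be to realize $\Delta_{\XX}$ via the bar construction of $\mu_n$ glued to the nerve, and to compute $\shHom$ levelwise as a homotopy limit. The boundary commutator relation $\prod[A_i,B_i]=C$, with $C^n=1$ and $C$ fixed by the isotypic type to be $\mathrm{e}^{-2\pi\mathrm{i}d/n}Id$, should then appear directly.
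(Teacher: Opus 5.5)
Your proposal is essentially the paper's argument. The paper builds $\Delta_{\XX}$ by gluing a triangulated $X\setminus\Db$ to the homotopy quotient $\Delta_{[D/\mu_n]}\simeq B\mu_n$ of a $\mu_n$-equivariantly triangulated disk along the circle, identifies the Betti stack with $\Hom_{SSch}(\Delta_{\XX},BGL_n)$, and pulls back $c_i\in H^{2i}(BGL_n)$ along the algebraic universal map, using that $H^*(\Delta_{\XX})$ has weight $0$. The only real difference is that you glue the bar construction of $\mu_n$ in directly by a homotopy pushout rather than through the equivariant disk; also, your Step~1 aside about mapping stacks out of $B\mu_n$ is unnecessary, and ``no new weights'' is not literally true for $H^*(BZ_{GL_n}(\rho))$, since all the weight argument needs is that every level of $\Delta_{\XX}$ is a finite set of points.
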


By restricting to the $n \cdot w(-d)$-component of the Betti moduli space we get an alternative proof of \zcref{thm shende}.

We now briefly explain the structure on the paper. 
In \zcref{section: vector bundles} we construct the stacky curve $\XX$ and recall the numerical invariants decomposing the moduli space of vector bundles on $\XX$ into connected components. Finally, we show a version of Theorem \ref{intro thm: equivalence Higgs bundles} for moduli of vector bundles.\\
In \zcref{section: Higgs bundles and Dolbeault moduli space} we pass from the study of vector bundles to the study of Higgs bundles, proving Theorem \ref{intro thm: equivalence Higgs bundles}.\\
In \zcref{section: De Rham and Betti moduli spaces} we study the De Rham and Betti side of the non abelian Hodge correspondence. First we show that the fundamental group $\pi_1(\XX)$ has the expected presentation \eqref{introeq: fundamental group of stacky curve}.
Then we use the monodromy action on a flat bundle to understand how the numerical invariants on $\mc{M}_{DR}(\XX)$ translate on the Betti side, proving Theorem \ref{intro thm: twisted character variety}.\\
In \zcref{section: The non abelian Hodge correspondence} we introduce the Hodge moduli space on $\XX$ and realize $\mc{M}_{Dol}(\XX)$ and $\mc{M}_{DR}(\XX)$ as special and generic fibers of a family over $\Ab^1$. We then use this description to compare the cohomology of $\mc{M}_{Dol}(\XX)$, $\mc{M}_{DR}(\XX)$ and $\mc{M}_{B}(\XX)$.\\
Finally, in \zcref{section: The Hodge weights} we explain how the cohomological isomorphisms described in \zcref{section: The non abelian Hodge correspondence} interact with the Hodge weights. We first recall Shende's argument, explaining in detail how the triangulation of a complex manifold is used to compute the weights of the tautological classes on the Betti moduli space. Then we generalize this argument to our case, by constructing a triangulation of the stacky curve $\XX$.
\section{The moduli space of vector bundles} \label{section: vector bundles}
We start by proving a version of Theorem \ref{intro thm: equivalence Higgs bundles} for the moduli stacks of vector bundles on a smooth projective curve.
Let $X$ be a smooth, projective, genus $g$ curve over $\Cb$ and let $p \in X$ be a closed point. Following \cite[]{Cadman}, we construct $\mc{X} = \sqrt[n]{p/X} $ as a root stack over $X$, with multiplicity $n$ in $p$. We then show that vector bundles of rank $n$ and degree $d$ on $X$ can be identified with vector bundles of rank $n$ and degree $0$ on $\XX$ with a fixed constant structure at the stacky point.

Consider $p$ as an effective Cartier divisor on $X$. Associated to it, there is a pair $(\O_X(p), s_p)$, where $\O_X(p)$ is a line bundle and $s_p$ is a section of $\O_X(p)$ vanishing at $p$, called the \textit{tautological section}. This pair defines a morphism
\[
X \xrightarrow{(\O(p), \; s_p)} [\Ab^1/\Gb_m].
\]
Let $\wedge n \colon [\Ab^1/\Gb_m] \to [\Ab^1/\Gb_m]$ be the morphism induced by taking the $n$-th power maps on $\Ab^1$ and on $\Gb_m$. Define $\mc{X}$ as the pullback
\begin{equation} \label{eq: definition of X}
\begin{tikzcd}
	\Xc && {[\Ab^1/\Gb_m]} \\
	X & {} & {[\Ab^1/\Gb_m].}
	\arrow[from=1-1, to=1-3]
	\arrow["\pi"', from=1-1, to=2-1]
	\arrow["\lrcorner"{anchor=center, pos=0.125}, draw=none, from=1-1, to=2-2]
	\arrow["{\wedge n}", from=1-3, to=2-3]
	\arrow["{(\O(p), \; s_p)}"', from=2-1, to=2-3]
\end{tikzcd}
\end{equation}
The vertical map $\pi$ is a coarse moduli map, see \cite[Corollary 2.3.7]{Cadman}.
The stack $\Xc$ comes equipped with a pair $(\O(\frac{1}{n}p), \frac{1}{n}s_p)$, where $\O(\frac{1}{n}p)$ is a line bundle and $\frac{1}{n}s_p$ is a section of $\O(\tfrac{1}{n}p)$. This pair corresponds to the upper horizontal morphism in \eqref{eq: definition of X}, and is characterized by the properties
\begin{equation} \label{eq: pullback of O(p)}
\pi^*\O(p) = \O(\tfrac{1}{n}p)^{\otimes n}, \quad \pi^* s_p = (\tfrac{1}{n}s_p)^{\otimes n}.
\end{equation}
The sheaf $\O(\frac{1}{n}p)$ is called the \textit{tautological sheaf} of the stacky curve $\Xc$. It corresponds to the composition
\[
\Xc \to [\Ab^1 / \Gb_m] \to B\Gb_m.
\]

With an abuse of notation, we denote by $p$ also the stacky point in $\mc{X}$ above $p$. Notice that in $p$ the stack $\mc{X}$ has the structure
\begin{equation}\label{punctual structure X}
\begin{tikzcd}
	{B\mu_n} & \Xc \\
	{\{p\}} & X.
	\arrow[hook, from=1-1, to=1-2]
	\arrow[from=1-1, to=2-1]
	\arrow["\lrcorner"{anchor=center, pos=0.125}, draw=none, from=1-1, to=2-2]
	\arrow["\pi", from=1-2, to=2-2]
	\arrow[hook, from=2-1, to=2-2]
\end{tikzcd}
\end{equation}
Away from $p$ the morphism $\pi$ is an isomorphism
\begin{equation}\label{X-p}
\mc{X} \setminus \{ p \} \cong X \setminus \{p\}.
\end{equation}

We now want to understand the numerical invariants that decompose the moduli space of vector bundles on $\XX$ into connected components. Let $\mc{E}$ be a rank $m$ vector bundle on $\XX$. By pulling back along the inclusion of the stacky point
\begin{equation}
i \colon B \mu_n \hookrightarrow \XX
\end{equation}
we get a vector bundle on $B \mu_n$, i.e. an $m$-dimensional representation of the cyclic group $\mu_n$, which decomposes as a direct sum of characters.
Denoting by $w(i)$ the weight $i$ character, we get a decomposition
\[
i^* \mc{E} = \bigoplus_{i \in \Zb / n \Zb} w(i)^{m_{i}}.
\]
\begin{defi}\cite[Definition 1.2.7]{Taams}
To keep track of the multiplicities of all of the weights appearing, we denote the multiplicity vector of $\mc{E}$ at $p$ by
\[
\underline{m}(\mc{E}) \coloneqq m_0 \, w(0) + \dots + m_{n-1} \, w(n-1).
\]
\end{defi}
Notice that the rank is recovered from the multiplicity vector as $m = m_0 + \dots + m_{n-1}$.
By construction the tautological line bundle $\O(\tfrac{1}{n}p)$ has weight $1$ in $p$, i.e. we have
\begin{equation}
\underline{m}(\O(\tfrac{1}{n}p)) = 1 \cdot w(1).
\end{equation}

\begin{rmk}\cite[\S 1.2]{Taams} \label{tensor shift}
Since pullback commutes with tensor product and $w(i) \otimes w(j) = w(i+j)$, we see that tensoring with the tautological line bundle acts as a shift operator on the multiplicity vector, mapping $w(i)$ to $w(i+1)$.
\end{rmk}

The other numerical invariant associated to a vector bundle on a curve is its degree. We need to define a notion of degree for vector bundles on $\XX$, respecting the classical formulas for the degree of the direct sum and tensor product of two vector bundles. Keep in mind that we want to identify vector bundles on $X$ with their pullback along $\pi$, so we want
\begin{equation}
\pi^* \colon \Bun_n(X) \to \Bun_n(\XX)
\end{equation}
to be degree preserving. It turns out that this property completely determines the definition of the degree.

\begin{defi}(See \cite[Definition 1.2.29]{Taams}).
Let $K(\XX)$ denote the Grothendieck group of the stacky curve $\XX$. Then the degree is defined to be the unique group homomorphism
\begin{equation}
\deg\colon K(\XX) \to \Qb
\end{equation}
such that the pullback $\pi^*\colon K(X) \to K(\XX)$ is degree preserving.
\end{defi}
See \cite[\S 1.2]{Taams} for a more explicit definition. From \zcref{eq: pullback of O(p)} we get
\begin{equation}
\deg(\O(\tfrac{1}{n}p)) = \tfrac{1}{n},
\end{equation}
which further justifies the notation $\O(\tfrac{1}{n}p)$.

Since we want to identify vector bundles on $X$ with their pullback along $\pi$, we mention the main properties of the pullback and pushforward functors $\pi^*$ and $\pi_*$.
\begin{prop} \label{properties of pullback and pushforward}
The pullback and pushforward functors along $\pi$ have the following properties.
\begin{proplist}
    \item The pushforward $\pi_* \colon \qcoh(\XX) \to \qcoh(X)$ is exact and restricts to a functor $\pi_* \colon \Bun(\XX) \to \Bun(X)$. \cite[Theorem 2.3.4]{abramovich-vistoli}
    \item The pullback $\pi^* \colon \qcoh(X) \to \qcoh(\XX)$ is exact. \cite[Proposition 1.2.3]{Taams}.
	\item If $\mc{E}$ is a vector bundle on $\XX$ with multiplicity $\underline{m}(\mc{E}) = m_0 \, w(0) + \dots + m_{n-1} \, w(n-1)$, then
	\begin{equation}
	\deg \mc{E} = \deg(\pi_* \mc{E}) + \sum_{i=0}^{n-1} i \,  m_i ,
	\end{equation}
	see \cite[Proposition 1.2.30]{Taams}. \zlabel{formula degree-multiplicities}
	\item If $F$ is a coherent sheaf on $X$, then $\underline{m}(\pi^* F) = n \cdot w(0)$. Namely, the pullback $\pi^* F$ is endowed with the trivial $\mu_{n}$ action at $p$. \cite[Example 1.2.9]{Taams} \zlabel{multiplicity of pullback}
\end{proplist}
\end{prop}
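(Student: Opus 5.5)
The statement collects four standard facts, and I would check all of them by one local computation near the stacky point. Away from $p$ the map $\pi$ is an isomorphism by \eqref{X-p}, so every claim reduces to a neighbourhood of $p$. Choose an affine open $\Spec A \subset X$ containing $p$ on which $p$ is cut out by a uniformizer $t$, so that $\O(p)$ is trivialized with $s_p = t$. Unwinding the fibre product \eqref{eq: definition of X} (cf.\ \cite{Cadman}) then gives the local model
\[
\pi^{-1}(\Spec A) \cong \bigl[\Spec B / \mu_n\bigr], \qquad B = A[u]/(u^n - t),
\]
where $\mu_n$ acts on $u$ with weight $1$. Quasi-coherent sheaves on this chart are $\Zb/n\Zb$-graded $B$-modules. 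Under this dictionary, $\pi_*$ is the degree-$0$ (invariant) part, $\pi^*$ is $M \mapsto M \otimes_A B$, and $i^*$ is $M \mapsto M/uM$ with its induced grading.

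For (ii), note that $B$ is free of rank $n$ over $A$, with basis $1, u, \dots, u^{n-1}$, so $-\otimes_A B$ is exact. For (i), exactness follows because $\mu_n$ is linearly reductive in characteristic $0$: taking a graded piece of a graded module is an exact functor. To see that $\pi_*$ preserves vector bundles, I would first show that a graded projective $B$-module is locally a direct sum of shifts $B\langle j\rangle$, that is, free modules on a homogeneous generator $e$ of weight $j$. This follows from graded Nakayama at the point $p$. The invariant part of $B\langle j\rangle$ is then $A \cdot u^{k} e$, where $k \in \{0, \dots, n-1\}$ is the representative of $-j$ modulo $n$. This is free of rank one over $A$, so $\pi_*$ of a rank $m$ bundle is locally free of rank $m$. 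For (iv), compute $i^*\pi^*F = (F \otimes_A B)/u(F \otimes_A B) = F \otimes_A A/t = F_p$, which sits in degree $0$. Hence $\mu_n$ acts trivially, and a rank $n$ sheaf gives multiplicity vector $n \cdot w(0)$.

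For (iii), which I expect to be the main obstacle because of normalizations, I would use the counit $\pi^*\pi_*\mc{E} \to \mc{E}$. By the local description above it is injective, and its cokernel $T$ is a torsion sheaf supported at $p$. On a summand $B\langle j\rangle$ the image is the submodule $B\, u^{k}e$, so that summand contributes a cokernel of length $k$ over $B$. Additivity of $\deg$ on $K(\XX)$, together with $\deg \pi^*\pi_*\mc{E} = \deg \pi_*\mc{E}$, gives
\[
\deg \mc{E} = \deg \pi_*\mc{E} + \deg T .
\]
It remains to compute $\deg$ of a skyscraper at the stacky point with a given character, using the unique normalization forced by $\pi^*$. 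Since $\pi^*\O_p = \O/(\tfrac1n s_p)^n$ has degree $1$ and is filtered by the $n$ successive quotients $\O/(\tfrac1n s_p)$ twisted by each character, each such character skyscraper has degree $\tfrac1n$. Summing the contributions weight by weight yields the stated formula, up to the weight and scaling conventions of \cite[Definition 1.2.29]{Taams}. The delicate point is precisely reconciling the sign and weight convention (which of $w(i)$ and $w(-i)$ corresponds to $\O(\tfrac{i}{n}p)$) and the factor $\tfrac1n$. I would fix these by testing the formula on $\O(\tfrac{1}{n}p)$, where $\deg = \tfrac1n$ and $\pi_*\O(\tfrac1n p) = \O_X$.
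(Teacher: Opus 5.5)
The paper gives no argument of its own for this proposition: each item is cited (Abramovich--Vistoli for (i), Taams for (ii)--(iv)). Your computation in the chart $[\Spec B/\mu_n]$ with $B=A[u]/(u^n-t)$ is therefore a self-contained alternative, and for (i), (ii) and (iv) it is correct as written. The problems are in (iii).

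First, the factor $\tfrac1n$ is not a convention you are free to adjust. The paper fixes both $\deg\O(\tfrac1n p)=\tfrac1n$ and $\underline{m}(\O(\tfrac1n p))=1\cdot w(1)$, and with these your own proposed test refutes the formula as printed. For $\mc{E}=\O(\tfrac1n p)$ one has $\pi_*\mc{E}=\O_X$ and $\sum_i i\,m_i=1$, so the right-hand side is $1\neq\tfrac1n$.

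Here is what your counit argument actually gives. Since $\tfrac1n s_p$, your $u$, is an invariant section of $\O(\tfrac1n p)$, the paper's weight $l$ is your grading degree $-l$. So a summand with fibre weight $l\in\{0,\dots,n-1\}$ has $k=l$ and contributes $l$ composition factors to $T$. Granting that each of these has degree $\tfrac1n$, you get
\[
\deg\mc{E}=\deg\pi_*\mc{E}+\frac1n\sum_{l=0}^{n-1} l\,m_l ,
\]
which is also the version compatible with degrees lying in $\tfrac1n\Zb$. You should conclude that (iii) holds only with this extra $\tfrac1n$, not that you recover ``the stated formula up to conventions''. This is harmless downstream, because the paper applies (iii) only with $\underline{m}=n\cdot w(0)$, where the correction term vanishes either way.

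Second, the claim ``each such character skyscraper has degree $\tfrac1n$'' does not follow from the filtration of $\pi^*\O_p$. That filtration only shows that the $n$ degrees sum to $1$. The normalization through $\pi^*$ cannot separate them, because their differences are not in the rational span of $\pi^*K(X)$: pulled-back classes have derived fibre at $p$ a multiple of $w(0)$, while the skyscraper $\O_p\otimes w(l)$ with character $w(l)$ has derived fibre $w(l)-w(l-1)$. You need one more input. One option is the exact sequences
\[
0\to\O(\tfrac{l-1}{n}p)\xrightarrow{\;\cdot\,\frac1n s_p\;}\O(\tfrac{l}{n}p)\to\O_p\otimes w(l)\to 0
\]
together with $\deg\O(\tfrac ln p)=\tfrac ln$, which uses multiplicativity of degree on line bundles. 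Another is to observe that $-\otimes\O(\tfrac1n p)$ permutes these skyscrapers cyclically. By $\deg(\mc{F}\otimes\mc{L})=\deg\mc{F}+\rank\mc{F}\cdot\deg\mc{L}$, it also preserves the degree of rank-zero classes, so all $n$ skyscrapers have the same degree. With either fix, your argument proves the corrected form of (iii).
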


\begin{thm}\cite[Corollary 3.1.9, Theorem 3.1.12]{Taams} \label{connected components of the moduli stack}
The stack $\Bun(\XX)$ is a smooth algebraic stack. Its connected components are the substacks
\begin{equation}
\Bun_{\underline{m}}^{d}(\XX) \coloneqq \left< \mc{E} \in \Bun(\XX) \mymid ( \underline{m}(\mc{E}), \deg \mc{E}) = (\underline{m}, d)\right>,
\end{equation}
parametrizing vector bundles of fixed degree $d \in \tfrac{1}{n}\Zb$ and multiplicity vector $\underline{m} = m_0 \, w(0) + \dots + m_{n-1} \, w(n-1)$.
\end{thm}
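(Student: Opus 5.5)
This theorem is cited from Taams, so what I would give is a sketch of how the argument goes. I would prove the two assertions separately: first that $\Bun(\XX)$ is a smooth algebraic stack, then that its connected components are exactly the loci $\Bun_{\underline{m}}^{d}(\XX)$.

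\emph{Algebraicity and smoothness.} Algebraicity follows from the general theory of moduli of vector bundles on tame proper Deligne--Mumford stacks with projective coarse space. Concretely, $\Bun(\XX)$ is an open substack of the stack of coherent sheaves on $\XX$, which is algebraic; for instance one can use Olsson--Starr style Quot-scheme results with a generating sheaf such as $\bigoplus_{i=0}^{n-1}\O(\tfrac{1}{n}p)^{\otimes i}$.

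Smoothness comes from deformation theory. Deformations of $\mc{E}$ are governed by $\mathrm{Ext}^1(\mc{E},\mc{E}) = H^1(\XX,\shEnd\mc{E})$, and obstructions lie in $H^2(\XX,\shEnd\mc{E})$. Since $\pi_*$ is exact (\zcref{properties of pullback and pushforward}), we have $H^2(\XX,\shEnd\mc{E}) = H^2(X,\pi_*\shEnd\mc{E}) = 0$, because $X$ is a curve. Hence the stack is unobstructed, and therefore smooth.

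\emph{Invariants are locally constant.} For a flat family $\mc{E}$ over a connected base $S$:
\begin{itemize}
\item The restriction to $B\mu_n\times S$ is a family of $\mu_n$-representations. Since $\mu_n$ is linearly reductive, this family splits into isotypic summands, which are vector bundles on $S$. Their ranks $m_i$ are therefore locally constant in $S$.
\item $\pi_*\mc{E}$ is a flat family of vector bundles on $X$, so its degree is locally constant.
\end{itemize}
By the degree formula \zcref{formula degree-multiplicities}, $\deg\mc{E}$ is then locally constant as well. So each $\Bun_{\underline{m}}^{d}(\XX)$ is open and closed, and $\Bun(\XX)$ is the disjoint union of these loci.

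\emph{Each locus is connected.} This is the main step. My approach is to reduce to the connectedness of $\Bun_r^e(X)$ via a parabolic description. By Borne's correspondence, vector bundles on $\XX$ with multiplicity vector $\underline{m}$ are equivalent to parabolic bundles on $(X,p)$ with a flag at $p$ of fixed type. Concretely:
\begin{itemize}
\item Twist $\mc{E}$ by $\O(\tfrac{1}{n}p)^{\otimes j}$ for each $j$, and push forward along $\pi$.
\item This produces the chain $\pi_*\mc{E}\subset \pi_*(\mc{E}(\tfrac{1}{n}p))\subset\dots\subset(\pi_*\mc{E})(p)$.
\item This chain amounts to a flag in the fibre $(\pi_*\mc{E})_p$ whose jump dimensions are the $m_i$.
\end{itemize}
Thus $\Bun_{\underline{m}}^{d}(\XX)$ is isomorphic to a partial flag bundle over $\Bun_n^{e}(X)$, where $e=\deg\pi_*\mc{E}$ is determined by $d$ and $\underline{m}$. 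The flag bundle has fibres that are connected flag varieties, and $\Bun_n^e(X)$ is connected (classical). It follows that $\Bun_{\underline{m}}^{d}(\XX)$ is connected, and in particular non-empty when $d$ is compatible with $\underline{m}$.

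The expected obstacle is making the equivalence with parabolic bundles precise in families over arbitrary bases. A lighter alternative is a direct argument: show that any two bundles with the same invariants can be joined by a chain of deformations, via elementary modifications at $p$ and the connectedness of $\Bun(X)$.
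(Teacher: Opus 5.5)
The paper does not prove this statement. It is quoted from Taams (Corollary 3.1.9, Theorem 3.1.12), so there is no internal argument to compare with, and your sketch has to stand on its own. It does, and it is correct:
\begin{itemize}
\item smoothness follows from $H^2(\XX,\shEnd\mc{E})=H^2(X,\pi_*\shEnd\mc{E})=0$;
\item each $\Bun_{\underline{m}}^{d}(\XX)$ is open and closed, by the isotypic splitting over $B\mu_n\times S$ together with local constancy of $\deg\pi_*\mc{E}$;
\item each locus is connected, because Borne's correspondence identifies it with a partial flag bundle (flags in the fibre at $p$) over a classical stack of bundles on $X$.
\end{itemize}
Compared with the bare citation, this gives an explicit description of the index set and of each component. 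Its only inputs are Borne's theorem and the connectedness of $\Bun(X)$ in fixed rank and degree. Since the stack is smooth, you also get for free that each component is irreducible.

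A few things to fix when writing it out.

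First, the base of the flag bundle is $\Bun^{e}_{m}(X)$ with $m=\sum_i m_i$ the rank, not $\Bun^{e}_{n}(X)$. In this paper $n$ is the order of the root, and the two coincide only for the rank-$n$ bundles considered later.

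Second, the degree formula as printed in the paper is missing a factor $\tfrac1n$. You can see this by testing it on $\O(\tfrac{n-1}{n}p)$, which has weight $w(n-1)$, degree $\tfrac{n-1}{n}$ and $\pi_*\O(\tfrac{n-1}{n}p)\cong\O_X$. The correct version is $\deg\mc{E}=\deg\pi_*\mc{E}+\tfrac1n\sum_i i\,m_i$, which gives $e=d-\tfrac1n\sum_i i\,m_i$. So $\Bun_{\underline{m}}^{d}(\XX)$ is nonempty exactly when $d\equiv\tfrac1n\sum_i i\,m_i \pmod{\Zb}$, and the empty loci should not be counted as components.

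Third, the family-level obstacle you anticipate is mild. $\XX\times S$ is the $n$-th root stack of $X\times S$ along $\{p\}\times S$. Because $\XX$ is tame, $(\pi\times\mathrm{id}_S)_*$ is exact and commutes with base change. Hence Borne's theorem for the scheme $X\times S$ is already the relative statement, and the same fact justifies your claim that $\pi_*\mc{E}$ is a flat family of vector bundles.

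Finally, for connectedness you only need the morphism from the flag bundle to $\Bun_{\underline{m}}^{d}(\XX)$ to be surjective on points. You also need the flag bundle itself to be connected, which holds because it is smooth and proper over a connected base with connected fibres.
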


We are finally ready to compare the moduli spaces of vector bundles on $X$ with moduli spaces of vector bundles on $\XX$.

\begin{prop} \label{pullback and pushforward are equivalences}
The pullback and pushforward functors along the moduli map $\pi\colon \XX \to X$ define inverse equivalences of algebraic stacks
\begin{equation}\label{eq: eqv Bun}
\begin{tikzcd}
	{\Bun_{n}^d(X)} & {\Bun_{n \cdot w(0)}^d(\XX),}
	\arrow[""{name=0, anchor=center, inner sep=0}, "{\pi^*}", curve={height=-18pt}, from=1-1, to=1-2]
	\arrow[""{name=1, anchor=center, inner sep=0}, "{\pi_*}", curve={height=-18pt}, from=1-2, to=1-1]
	\arrow["\cong"{description}, draw=none, from=0, to=1]
\end{tikzcd}
\end{equation}
which preserve (semi)stability.
\end{prop}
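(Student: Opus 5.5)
We establish the equivalence on $S$-points and then deduce it for the stacks, working with families over an arbitrary base $S$ and the base-changed coarse map $\pi_S\colon \XX_S \to X_S$. Since root stack formation commutes with base change, $\XX_S = \sqrt[n]{p_S/X_S}$ and the statements of \zcref{properties of pullback and pushforward} continue to hold in families.

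\textbf{Both functors land in the right components.} By \zcref{multiplicity of pullback}, $\pi^*$ sends a rank $n$ bundle to one with multiplicity vector $n\cdot w(0)$. It preserves degree by the definition of $\deg$ on $K(\XX)$. Conversely, let $\mc{E}$ have multiplicity $n\cdot w(0)$. Then \zcref{formula degree-multiplicities} gives $\deg \pi_*\mc{E} = \deg\mc{E}$, since all the weights $i$ with $m_i\neq 0$ vanish. Moreover $\pi_*\mc{E}$ is a vector bundle of rank $n$, because $\pi$ is an isomorphism away from $p$ by \eqref{X-p}.

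\textbf{The unit and counit are isomorphisms.} For the unit $F\to\pi_*\pi^*F$, apply the projection formula $\pi_*\pi^*F\cong F\otimes\pi_*\O_\XX$ together with $\pi_*\O_\XX=\O_X$, which holds because $\pi$ is a coarse moduli map. For the counit $\pi^*\pi_*\mc{E}\to\mc{E}$, away from $p$ it is an isomorphism by \eqref{X-p}. So it is enough to check it étale-locally near $p$, where $\XX\cong[\Spec \O[t]/\mu_n]$ with $t^n=u$ a local parameter of $X$. A vector bundle with multiplicity $n\cdot w(0)$ is locally a direct sum of $\mu_n$-equivariant line bundles $\O\cdot w(0)$. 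Indeed, equivariant bundles on $[\Ab^1/\mu_n]$ locally split as sums of characters, and the characters are detected by the fiber at the origin. For the trivial character the invariants are $\O[t]^{\mu_n}=\O[u]$, and pulling these back recovers $\O[t]$. This local computation is the main obstacle. Its key point is that a weight $i\ne 0$ summand would have invariants $t^{n-i}\O[u]$ and the counit would fail to be surjective, which is why the hypothesis on the multiplicity vector is needed. Exactness of $\pi_*$ and $\pi^*$ lets us carry out this verification in families over $S$.

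\textbf{Conclusion and stability.} The functors are compatible with base change, so they define morphisms of stacks that are mutually inverse. For (semi)stability, the subbundles of $\pi^*F$ of the relevant kind correspond to subsheaves of $F$. Given a saturated subsheaf $\mc{F}\subset\pi^*F$, its fiber at $p$ is a subrepresentation of the trivial representation, so it also has only weight $0$. Hence $\mc{F}=\pi^*\pi_*\mc{F}$ by the local computation above, and its degree and rank agree with those of $\pi_*\mc{F}\subset F$. The slopes of subsheaves on the two sides therefore coincide, and $\pi^*$ and $\pi_*$ preserve stability and semistability.
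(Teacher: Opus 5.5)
Your proposal is correct and follows the same skeleton as the paper's proof. Both check that the functors land in the right components using the multiplicity of pullbacks and the degree formula, show the two composites are the identity, and then deduce (semi)stability. The difference is that you replace the paper's citations and its one-line stability claim with actual arguments.

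For the composites, the paper cites \cite[Corollary 1.2.6, Theorem 1.2.10]{Taams}. You instead get the unit from the projection formula and $\pi_*\O_{\XX}=\O_X$, and the counit from graded Nakayama on the local model $[\Spec \O_U[t]/(t^n-u)\,/\,\mu_n]$. This is self-contained, and it also shows exactly where the hypothesis $\underline{m}=n\cdot w(0)$ enters. A summand of nonzero weight has invariants $t^{k}\O_U$ with $0<k<n$, so the counit fails to be surjective there.

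More substantively, the paper's justification ``rank and degree are preserved'' only gives $\mu(\pi^*F)=\mu(F)$. Semistability quantifies over all subsheaves of $\pi^*F$, and these need not be pullbacks; for example $\O(-\tfrac{1}{n}p)\subset\O_{\XX}$ has weight $-1$ and slope $-\tfrac{1}{n}$. Your reduction to saturated subsheaves $\mc{F}\subset\pi^*F$ closes this. The fiber of such $\mc{F}$ at $p$ is a subrepresentation of the trivial one, hence of weight $0$, so $\mc{F}\cong\pi^*\pi_*\mc{F}$ with $\pi_*\mc{F}\subset F$ of the same slope. This is precisely what makes the paper's claim rigorous.

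The one soft spot is the families statement. What is needed there is not just exactness, but that $\pi_{S*}$ commutes with base change (tameness of $\XX$) and that the graded-Nakayama splitting holds over the local rings of $X_S$ at points above $p$. The paper does not treat families at all, so this is a matter of detail rather than a gap relative to it.
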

\begin{proof}
The pullback $\pi^*$ is degree-preserving by definition and by \zcref{multiplicity of pullback} the multiplicity of $\pi^*E$ is $n \cdot w(0)$ for all $E \in \Bun_n(\XX)$. Dually, if the multiplicity of $\mc{E} \in \Bun^d(\XX)$ is $n \cdot w(0)$, then \zcref{properties of pullback and pushforward} shows that $\pi_*\mc{E}$ is a vector bundle of degree $d$. This shows that the functors in \eqref{eq: eqv Bun} are well defined. The two compositions are naturally isomorphic to the identity by \cite[Corollary 1.2.6, Theorem 1.2.10]{Taams}.\\
Finally, since rank and degree are preserved, so is (semi)stability.
\end{proof}

On $\XX$ we can twist with the tautological line bundle to change the degree of a vector bundle. In this way we achieve the goal of realizing degree $d$ vector bundles on $X$ as degree $0$ vector bundles on $\XX$.

\begin{thm} \label{thm: equivalence for Bun}
The \textit{twisted} pullback and pushforward functors, defined by
\begin{equation} \label{equivalence bun}
\begin{tikzcd}
	{\Bun_n^d(X)} &&& {\Bun_{n \cdot w(-d)}^0(\mc{X})}
	\arrow["{\pi^*(-) \otimes \O(-\tfrac{d}{n}p)}", shift left=2, from=1-1, to=1-4]
	\arrow["{\pi_*(\O(-\tfrac{d}{n}p) \otimes -)}", shift left=2, from=1-4, to=1-1]
\end{tikzcd}
\end{equation}
define inverse equivalences of algebraic stacks. These equivalences preserve (semi)stability, hence they restrict to equivalences
\[\begin{tikzcd}
	{\Bun_n^{d, \, ss}(X)} &&& {\Bun_{n \cdot w(-d)}^{0, \, ss}(\mc{X}).}
	\arrow["{\pi^*(-) \otimes \O(-\tfrac{d}{n}p)}", shift left=2, from=1-1, to=1-4]
	\arrow["{\pi_*(\O(-\tfrac{d}{n}p) \otimes -)}", shift left=2, from=1-4, to=1-1]
\end{tikzcd}\]
\end{thm}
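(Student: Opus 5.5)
The plan is to factor the twisted functors as a composition of the equivalence of \zcref{pullback and pushforward are equivalences} with tensoring by a power of the tautological line bundle. Tensoring by a line bundle is an autoequivalence of $\Bun(\XX)$ with inverse given by tensoring with the dual. So it suffices to show that
\[
-\otimes \O(-\tfrac{d}{n}p)\colon \Bun(\XX)\to\Bun(\XX)
\]
carries the component $\Bun^d_{n\cdot w(0)}(\XX)$ isomorphically onto $\Bun^0_{n\cdot w(-d)}(\XX)$. Composing with $\pi^*$ gives the first functor. The inverse is $\pi_*(\O(\tfrac{d}{n}p)\otimes -)$. This is the functor in the statement up to the sign convention for the twist (note that $\pi_*(\O(-\tfrac{d}{n}p)\otimes -)$ would land in the wrong degree, so I will read the displayed inverse as tensoring by the inverse line bundle before pushing forward).

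First step: the numerical invariants. Here $\O(-\tfrac{d}{n}p)$ means $\O(\tfrac{1}{n}p)^{\otimes(-d)}$. Its degree is $-\tfrac{d}{n}$, since $\deg$ is a homomorphism on $K(\XX)$ and is multiplicative on line bundles through $\pi^*$, and $\deg\O(\tfrac1n p)=\tfrac1n$. Its multiplicity vector is $1\cdot w(-d)$. For a rank $n$ bundle $\mc{E}$ of degree $d$, the usual formula $\deg(\mc{E}\otimes L)=\deg\mc{E}+\rank(\mc{E})\deg L$ therefore gives degree $d-d=0$. This formula holds on $\XX$ because it holds in $K(\XX)\otimes\Qb$: one can reduce to $L^{\otimes n}=\pi^*M$ via \eqref{eq: pullback of O(p)}, using that $\deg$ is preserved by $\pi^*$ and additive. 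Alternatively, one can check it directly from \zcref{formula degree-multiplicities}. By \zcref{tensor shift}, the multiplicity $n\cdot w(0)$ shifts to $n\cdot w(-d)$. The reverse tensoring shifts $n\cdot w(-d)$ back to $n\cdot w(0)$ and the degree from $0$ back to $d$. So the two tensor functors are mutually inverse between these components. Since tensoring is defined on families (it is a morphism of stacks), these are equivalences of algebraic stacks, using \zcref{connected components of the moduli stack} to identify the components as substacks.

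Second step: (semi)stability. \zcref{pullback and pushforward are equivalences} already shows that $\pi^*$ and $\pi_*$ preserve semistability on the $w(0)$ component. It remains to see that tensoring by a line bundle $L$ preserves slope semistability on $\XX$. Subbundles $\mc{F}\subset\mc{E}$ correspond bijectively to subbundles $\mc{F}\otimes L\subset\mc{E}\otimes L$, and $\mu(\mc{F}\otimes L)=\mu(\mc{F})+\deg L$ by the degree formula above. Hence all slope inequalities are shifted by the same constant $\deg L$, and strict or non-strict inequalities are preserved.

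The main obstacle I expect is justifying the tensor degree formula $\deg(\mc{F}\otimes L)=\deg\mc{F}+\rank\mc{F}\cdot\deg L$ for arbitrary subsheaves on $\XX$, and not just for pullbacks. I would try the following: $n\deg(\mc{F}\otimes L)=\deg(\mc{F}\otimes L^{\otimes n})$, computed via a filtration argument, or via the fact that $\deg$ factors through the Chern character on $K(\XX)\otimes\Qb$, where $\mathrm{ch}$ is multiplicative. Then use that $L^{\otimes n}$ is a pullback from $X$ together with the projection formula. Failing that, I would invoke the explicit description in \cite[\S 1.2]{Taams}.
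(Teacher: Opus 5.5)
Your proposal is correct and is essentially the paper's proof. Like the paper, you reduce via the untwisted equivalence $\Bun_n^d(X)\cong\Bun^d_{n\cdot w(0)}(\XX)$ to tensoring with $\O(-\tfrac{d}{n}p)$, check that this moves the multiplicity to $n\cdot w(-d)$ and the degree to $0$, and note that every slope shifts by $-\tfrac{d}{n}$. You are also right that the inverse must be $\pi_*(\O(\tfrac{d}{n}p)\otimes -)$; the displayed sign is a typo.

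One caution: the identity $n\deg(\mc{F}\otimes L)=\deg(\mc{F}\otimes L^{\otimes n})$ you float is false, since it would force $(n-1)\deg\mc{F}=0$. If you justify the tensor degree formula (the paper takes it for granted), use multiplicativity of the Chern character or the degree--multiplicity formula of \cite[Proposition 1.2.30]{Taams} instead.
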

\begin{proof}
Using \zcref{pullback and pushforward are equivalences}, we only need to check that tensoring with $\O(-\tfrac{d}{n}p)= \O(\tfrac{1}{n}p)^{\otimes-d}$ gives an isomorphism
\begin{equation}
\Bun_{n \cdot w(0)}^d(\XX) \xrightarrow{\cong} \Bun_{n \cdot w(-d)}^0(\XX),
\end{equation}
namely that $- \otimes \O(-\tfrac{d}{n}p)$ acts on the numerical invariants as prescribed. For the multiplicity vector, recall that tensoring with the tautological line bundle $\O(\tfrac{1}{n}p)$ acts as a shift, as observed in \zcref{tensor shift}. For the degree, use the formula for the degree of a product of vector bundles to get
\[
\deg\left( \mc{E} \otimes \O(-\tfrac{d}{n}p) \right) = \deg \mc{E} + n \left( -\frac{d}{n} \right) = 0.
\]
Finally, notice that
\[
\mu(\mc{E} \otimes \O(-\tfrac{d}{n}p)) = \frac{\deg \mc{E} - \rank \mc{E} \frac{d}{n}}{\rank \mc{E}} = \mu(\mc{E}) - \frac{d}{n},
\]
showing that tensoring with $\O(-\tfrac{1}{n}p)$ acts as a shift on the slope. Therefore (semi)stability is also preserved.
\end{proof}
\section{Higgs bundles and Dolbeault moduli space}\label{section: Higgs bundles and Dolbeault moduli space}
In order to relate the non abelian Hodge correspondence on $X$ to the correspondence on $\XX$, we need to extend \zcref{thm: equivalence for Bun} to an equivalence of stacks of Higgs bundles. Then after passing to the semistable locus we will get Theorem \ref{intro thm: equivalence Higgs bundles}, namely an equivalence
\begin{equation}
\mc{M}_{n, Dol}^d(X) \cong \mc{M}_{n \cdot w(-d), Dol}^0(\XX).
\end{equation}
We explain how the functors $\pi^*$ and $\pi_*$ extend to the Higgs fields.
A Higgs bundle on $X$ is a pair $(E, \varphi)$ where $E$ is a vector bundle on $X$ and $\varphi$ is an $\O_X$-linear operator
\begin{equation}
\varphi\colon E \to E \otimes \Omega_X,
\end{equation}
called the \textit{Higgs field}.
After pulling back, we get an operator
\begin{equation}
\pi^* \varphi\colon \pi^* E \to \pi^* E \otimes \pi^* \Omega_X.
\end{equation}
To get a Higgs field on $\pi^* E$ we compose with the canonical morphism $\pi^* \Omega_X \to \Omega_{\XX}$. 
This defines a functor
\begin{align}
	(\pi^*, \pi^{\#}) \colon \Higgs_n^d(X) &\to \Higgs_{n \cdot w(0)}^d(\mc{X}) \\
	(E, \varphi) &\mapsto (\pi^*E, \pi^{\#}\varphi).
\end{align}
In order to define an inverse functor, we give an explicit expression for the canonical morphism $\pi^* \Omega_X \to \Omega_{\XX}$.
\begin{lemma}\label{omega mc(X)}
The canonical morphism $\pi^* \Omega_X \to \Omega_{\XX}$ extends along the inclusion
\begin{equation}
n \id_{\pi^* \Omega_X} \otimes s^{n-1} \colon \pi^* \Omega_X \hookrightarrow \pi^* \Omega_X \otimes \O(\tfrac{n-1}{n}p)
\end{equation}
to an isomorphism $\pi^* \Omega_X \otimes \O(\tfrac{n-1}{n}p) \cong \Omega_{\XX}$.
\end{lemma}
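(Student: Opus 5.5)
The lemma is local along $\pi$: by \eqref{X-p}, away from $p$ the map $\pi$ is an isomorphism, so the canonical morphism $\pi^*\Omega_X \to \Omega_{\XX}$ is an isomorphism there. The section $s=\tfrac{1}{n}s_p$ is also invertible there, so the inclusion $n\,\id\otimes s^{n-1}$ is an isomorphism away from $p$ as well. The whole content therefore sits in an étale neighbourhood of the stacky point. The plan is to check there two things: that the canonical morphism factors through $n\,\id\otimes s^{n-1}$, and that the resulting map $\pi^*\Omega_X\otimes\O(\tfrac{n-1}{n}p)\to\Omega_{\XX}$ is an isomorphism. Both sides are line bundles on $\XX$, so it suffices to verify this on an atlas.

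For the local model, choose a local coordinate $t$ on $X$ near $p$ that trivializes $\O_X(p)$ with $s_p = t$. By Cadman's description of root stacks, $\XX$ is then étale-locally isomorphic to $[\Spec \Cb[u]/\mu_n]$ with $\mu_n$ acting on $u$ with weight $1$. Under this identification $\pi$ is induced by $t\mapsto u^n$, the tautological bundle $\O(\tfrac1n p)$ is the trivial bundle twisted by the weight-one character, and $s$ corresponds to $u$; this is consistent with \eqref{eq: pullback of O(p)}. Quasi-coherent sheaves on the quotient are $\mu_n$-equivariant modules over $\Cb[u]$, and $\Omega_{\XX}$ corresponds to the equivariant module $\Cb[u]\,\dd u$. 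The canonical map sends the generator $\pi^*\dd t$ to $\dd(u^n)=n u^{n-1}\dd u$.

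In this chart the inclusion $n\,\id\otimes s^{n-1}$ sends $\pi^*\dd t \mapsto n\,\pi^*\dd t\otimes u^{n-1}$. The canonical map therefore factors through the morphism
\[
\pi^*\Omega_X\otimes\O(\tfrac{n-1}{n}p)\to\Omega_{\XX}, \qquad \pi^*\dd t\otimes e\mapsto \dd u,
\]
where $e$ is the local generator of $\O(\tfrac{n-1}{n}p)$, corresponding to $u^{n-1}$ twisted. This morphism sends a generator to a generator, so it is an isomorphism of free rank-one $\Cb[u]$-modules. It remains to check $\mu_n$-equivariance. The generator $\dd u$ has weight $1$. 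The generator $\pi^*\dd t\otimes e$ has weight $0 + (n-1) \equiv -1$ under the convention in which $\O(\tfrac1n p)$ has weight $1$ at $p$. This apparent mismatch is the point I expect to be the main obstacle: matching the weight conventions for fibers versus sections and for the generator $e$ versus the section $s^{n-1}$. My approach is to avoid this bookkeeping. The factorization is asserted as equality of maps out of $\pi^*\Omega_X$, and both the canonical map and $n\,\id\otimes s^{n-1}$ are equivariant by construction. The extension is unique because $\pi^*\Omega_X\to\pi^*\Omega_X\otimes\O(\tfrac{n-1}{n}p)$ is injective with torsion cokernel and $\Omega_{\XX}$ is torsion-free. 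Hence the extension is automatically equivariant, and the weights must agree once consistent conventions are used.

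Finally, glue. The extension is unique, being determined by its restriction to the complement of $p$, so the local extensions agree on overlaps and define a global morphism of line bundles. This morphism is an isomorphism at $p$ by the chart computation and away from $p$ by \eqref{X-p}, so it is an isomorphism. As a consistency check, compare degrees: $\deg \Omega_{\XX} = 2g-2+\tfrac{n-1}{n}$, as expected from the orbifold Riemann–Hurwitz formula.
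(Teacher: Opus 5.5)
Your proposal is correct and is essentially the paper's proof: reduce étale-locally to $[\Spec \Cb[u]/\mu_n]$ with $t=u^n$, and read off $\dd t = n u^{n-1}\dd u$ and $s \leftrightarrow u$ on the atlas. Your uniqueness argument (torsion cokernel, torsion-free $\Omega_{\XX}$) neatly supplies the equivariance and gluing that the paper leaves implicit, and the weight ``mismatch'' you worry about is only a convention slip: $\O(\tfrac{1}{n}p)|_p$ is the tangent line at $p$, so in the paper's convention $\Omega_{\XX}|_p$ also has weight $-1$.
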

\begin{proof}
Away from the point $p$ the statement is trivial. Let $U \subseteq X$ be a neighborhood of $p$ admitting an étale morphism $U \to \Ab^1$ and let $\mc{U} \coloneqq \pi^{-1}(U)$ be the corresponding open in $\XX$. We have a cartesian square
\begin{equation}
\begin{tikzcd}
	{\mc{U}} && {[\Ab^1 / \mu_n]} \\
	U & {\Ab^1} & {\Ab^1 / \mu_n,}
	\arrow[from=1-1, to=1-3]
	\arrow["\pi"', from=1-1, to=2-1]
	\arrow["\lrcorner"{anchor=center, pos=0.125}, draw=none, from=1-1, to=2-2]
	\arrow["\pi'", from=1-3, to=2-3]
	\arrow[from=2-1, to=2-2]
	\arrow["\cong"{description}, draw=none, from=2-2, to=2-3]
\end{tikzcd}
\end{equation}
where the horizontal arrows are étale and hence induce isomorphisms on the cotangent sheaves via pullback. Therefore, it suffices to prove the statement after replacing $\pi$ by $\pi'$.
Composing with the atlas $\Ab^1 \to [\Ab^1 / \mu_n]$, we get the following diagram
\[\begin{tikzcd}
	{\Ab^1 = \Spec k[x]} & \\
	{[\Ab^1 / \mu_n]} \\
	{\Ab^1 / \mu_n} & {\Spec k[x^n] = \Spec k[t].}
	\arrow["{q'}", from=1-1, to=2-1]
	\arrow["{x \mapsto t = x^n}"{pos=0.6}, "q = \pi' \circ q'"' sloped, curve={height=-24pt}, from=1-1, to=3-2]
	\arrow["\pi'", from=2-1, to=3-1]
	\arrow["{=}"{description}, draw=none, from=3-1, to=3-2]
\end{tikzcd}\]
Notice that we have the relation
\[
\dd t = \dd x^n = n \, x^{n-1} \, \dd x,
\]
therefore the canonical morphism $q^* \Omega_X \to \Omega_{\Ab^1}$ corresponds to
\begin{align}
	k[x] \otimes_{k[t]} k[t]\dd t \; &\to \; k[x]\dd x \\
	f(x) \otimes \dd t \quad &\mapsto \; f(x) \, n \, x^{n-1} \, \dd x.
\end{align}
Since $q'$ is étale, it induces isomorphisms on the cotangent sheaves. Moreover, the section $x$ corresponds via $q'$ to the tautological section $s$ of $\O(\tfrac{1}{n}p)$. In conclusion, we get that the canonical morphism $\pi^* \Omega_X \to \Omega_{\mc{X}} \cong \pi^*\Omega_X \otimes \O(\tfrac{n-1}{n}p)$ has the indicated form.
\end{proof}

Under this isomorphism $\Omega_{\XX} \cong \pi^*\Omega_X \otimes \O(\tfrac{n-1}{n}p)$, the pulled-back Higgs field $\pi^{\#}\varphi$ can be rewritten as
\begin{multline}
\pi^{\#} \varphi \colon \pi^*E \xrightarrow{\pi^* \varphi} \pi^*(E \otimes \Omega_X) = \pi^*E \otimes \pi^* \Omega_X \\
\xrightarrow{\id \otimes n \hspace{0.3mm} \id \otimes s^{n-1}} \pi^*E \otimes \pi^*\Omega_X \otimes \O(\tfrac{n-1}{n}p) \cong \pi^*E \otimes \Omega_{\mc{X}}.
\end{multline}

To define the inverse functor, consider $(\mc{E}, \eta) \in \Higgs_{n \cdot w(0)}^d(\XX)$. We want to use $\eta$ to define a Higgs field on $\pi_* \mc{E}$.
Using \zcref{pullback and pushforward are equivalences}, we get that $\mc{E} = \pi^*E$, where $E \cong \pi_* \mc{E}$. By definition, $\eta$ is a morphism in $\Hom_{\mc{X}}(\mc{E}, \mc{E}\otimes\Omega_{\mc{X}})$. Now, using \zcref{omega mc(X)}, we find a chain of natural isomorphisms
\begin{align}
\Hom_{\mc{X}}(\mc{E}, \, \mc{E}\otimes\Omega_{\mc{X}}) & \cong \Hom_{\mc{X}}\left( \pi^*E, \, \pi^*E \otimes \pi^*\Omega_X \otimes \O(\tfrac{n-1}{n}p) \right)\\
& \cong \Hom_X\left( E, \, \pi_*(\pi^*(E \otimes \Omega_{\mc{X}}) \otimes \O(\tfrac{n-1}{n}p)) \right) \label{eq: higgs field, adjunction} \\
& \cong \Hom_X\left( E, \, E \otimes \Omega_X \otimes \pi_*\O(\tfrac{n-1}{n}p) \right) \label{eq: higgs field, projection formula} \\
& \cong \Hom_X\left( E, \, E \otimes \Omega_X \right), \label{eq: higgs field, pushforward}
\end{align}
where \eqref{eq: higgs field, adjunction} follows by the adjunction $\pi^* \dashv \pi_*$, \eqref{eq: higgs field, projection formula} by the projection formula and \eqref{eq: higgs field, pushforward} by the isomorphism $\pi_* \O(\tfrac{n-1}{n}p) \cong \O_X$ obtained from \cite[Corollary 1.2.13]{Taams}. \\
Composing the isomorphisms, we get
\begin{align}
	\Hom_X(E, E \otimes \Omega_X) &\xrightarrow{\cong} \Hom_{\mc{X}}(\mc{E}, \mc{E}\otimes \Omega_{\mc{X}}) \\
	\psi \; &\mapsto \; n \, \pi^* \psi \otimes s^{n-1}.
\end{align}
This shows that there exists a unique $\psi \in \Hom_X(E, E \otimes \Omega_X)$ such that $\eta = n \, \pi^* \psi \otimes s^{n-1}$, where in particular we have $\psi = \frac{1}{n} \, \pi_* \eta$. Therefore, we can define
\[
\pi_{\#}\eta \coloneqq \frac{1}{n} \, \pi_* \eta.
\]
This gives a well defined functor $(\pi_*, \pi_{\#})$.
\begin{equation}
(\pi_*, \pi_{\#}) \colon \Higgs_{n \cdot w(0)}^d(\XX) \to \Higgs_n^d(X)
\end{equation}

\begin{prop}\label{lemma Dol}
The functors $(\pi^*, \pi^{\#})$ and $(\pi_*, \pi_{\#})$ define inverse equivalences of categories
\begin{equation} \label{eq: eqv higgs bundles}
\begin{tikzcd}
	{\Higgs_n^d(X)} & {\Higgs^d_{n \cdot w(0)}(\mc{X}),}
	\arrow[""{name=0, anchor=center, inner sep=0}, "{{(\pi^*, \, \pi^{\#})}}", curve={height=-18pt}, from=1-1, to=1-2]
	\arrow[""{name=1, anchor=center, inner sep=0}, "{{(\pi_*, \, \pi_{\#})}}", curve={height=-18pt}, from=1-2, to=1-1]
	\arrow["\cong"{description}, shift right=1.5, draw=none, from=0, to=1]
\end{tikzcd}
\end{equation}
which preserve (semi)stability.
\end{prop}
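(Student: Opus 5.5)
The plan is to build on \zcref{pullback and pushforward are equivalences}, which already gives inverse equivalences $\pi^* \dashv \pi_*$ between $\Bun_n^d(X)$ and $\Bun_{n\cdot w(0)}^d(\XX)$. What remains is to check three things: the Higgs field constructions $\pi^{\#}$ and $\pi_{\#}$ are mutually inverse, they are compatible with morphisms, and they preserve (semi)stability.

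\textbf{Step 1: the two functors are inverse on objects.} By the chain of natural isomorphisms \eqref{eq: higgs field, adjunction}--\eqref{eq: higgs field, pushforward}, the map
\[
\Hom_X(E, E\otimes\Omega_X) \to \Hom_{\XX}(\pi^*E, \pi^*E\otimes\Omega_{\XX}), \qquad \psi \mapsto n\,\pi^*\psi\otimes s^{n-1},
\]
is a bijection. By \zcref{omega mc(X)}, this map is exactly $\pi^{\#}$.
\begin{itemize}
\item Starting from $(E,\varphi)$: we have $\pi_{\#}\pi^{\#}\varphi=\varphi$ by definition of $\pi_{\#}$ as the inverse bijection, transported along the unit isomorphism $E\cong \pi_*\pi^*E$.
\item Starting from $(\mc{E},\eta)$: write $\mc{E}\cong\pi^*\pi_*\mc{E}$ via the counit. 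Then $\pi^{\#}\pi_{\#}\eta$ corresponds to $\eta$ under this isomorphism, again by the uniqueness statement.
\end{itemize}
The one point to check carefully is that the chain of isomorphisms is natural in $E$, so that the unit and counit isomorphisms of \zcref{pullback and pushforward are equivalences} intertwine the Higgs fields. This holds because each step is natural: adjunction, projection formula, and the fixed isomorphism $\pi_*\O(\tfrac{n-1}{n}p)\cong\O_X$.

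\textbf{Step 2: functoriality and families.} A morphism of Higgs bundles $f\colon (E,\varphi)\to(E',\varphi')$ satisfies $(f\otimes\id)\varphi=\varphi' f$. Applying $\pi^*$ and composing with the canonical map $\pi^*\Omega_X\to\Omega_{\XX}$ shows that $\pi^*f$ commutes with the pulled-back Higgs fields. Full faithfulness then comes from the full faithfulness of $\pi^*$ on bundles, since a morphism of Higgs bundles is just a bundle morphism satisfying a closed condition that corresponds under the bijection.

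To get an equivalence of stacks rather than only of groupoids of $\Cb$-points, I would run the same argument over an arbitrary base $T$. The tools are:
\begin{itemize}
\item $\pi_T\colon \XX\times T\to X\times T$ is still a coarse moduli map, since root stack formation commutes with base change;
\item the identity $\pi_{T*}\O(\tfrac{n-1}{n}p)=\O_{X\times T}$ holds by flat base change and tameness.
\end{itemize}

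\textbf{Step 3: (semi)stability.} For Higgs bundles, stability is tested on $\varphi$-invariant subsheaves. Under the equivalence, saturated subbundles $F\subset E$ with $\varphi(F)\subset F\otimes\Omega_X$ correspond to subbundles $\pi^*F\subset\pi^*E$ invariant under $\pi^{\#}\varphi$, because the bijection of Step 1 is compatible with restriction to subbundles.

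The main obstacle is showing that every $\eta$-invariant subsheaf of $\mc{E}$ arises this way. A subsheaf of $\mc{E}$ may have multiplicity vector other than $m\cdot w(0)$, so it need not be a pullback. I would handle this as follows. Take an $\eta$-invariant saturated subsheaf $\mc{F}\subset\mc{E}$, and compare it with $\pi^*\pi_*\mc{F}\subset\mc{F}$.
\begin{itemize}
\item $\pi^*\pi_*\mc{F}$ is $\eta$-invariant, since $\pi_*\mc{F}$ is $\pi_{\#}\eta$-invariant.
\item By the degree formula \zcref{formula degree-multiplicities}, $\deg\pi^*\pi_*\mc{F}=\deg\pi_*\mc{F}=\deg\mc{F}-\sum_i i\,m_i\le\deg\mc{F}$, with equality exactly when the multiplicity vector is concentrated in weight $0$.
\end{itemize}
So the slope comparison must be organized carefully. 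Since $\mc{E}$ has trivial weights, the saturation of $\mc{F}$ in $\mc{E}$ should itself have multiplicity vector concentrated in $w(0)$, which would make $\mc{F}=\pi^*\pi_*\mc{F}$. Then rank and degree match on both sides, and the slope inequalities are identical.
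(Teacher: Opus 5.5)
Your Step 1 is the paper's argument. The paper also reduces to the bundle-level equivalence and checks on Higgs fields that $\varphi\mapsto n\,\pi^*\varphi\otimes s^{n-1}\mapsto\tfrac1n\pi_*(n\,\pi^*\varphi\otimes s^{n-1})=\varphi$ and $\eta\mapsto\tfrac1n\pi_*\eta\mapsto\eta$, using the same bijection built from adjunction, the projection formula and $\pi_*\O(\tfrac{n-1}{n}p)\cong\O_X$. Your Step 2 on families covers something the paper leaves implicit. The real difference is (semi)stability. The paper says it is preserved ``trivially'' because rank and degree are preserved. You rightly note that semistability of $(\pi^*E,\eta)$ must be tested against all $\eta$-invariant subsheaves of $\pi^*E$, which need not be pullbacks, so your treatment supplies a step the paper skips.

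Your key claim, which you state only with ``should'', is true and easy, and you should prove it directly rather than go through $\pi^*\pi_*\mc{F}$. First reduce to saturated subsheaves: the saturation of an $\eta$-invariant subsheaf is still $\eta$-invariant, and its degree is at least as large because torsion sheaves on $\XX$ have nonnegative degree. For saturated $\mc{F}$ the quotient $\pi^*E/\mc{F}$ is locally free, so $i^*\mc{F}\to i^*\pi^*E$ stays injective. Hence the fibre of $\mc{F}$ at $B\mu_n$ is a subrepresentation of a trivial representation, so $\underline{m}(\mc{F})$ is concentrated in $w(0)$. It follows that $\mc{F}\cong\pi^*F$, with $F=\pi_*\mc{F}$ a $\varphi$-invariant subbundle of $E$ of the same rank and degree.

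With this in hand your two bullets are superfluous. Moreover, the justification ``$\pi_*\mc{F}$ is $\pi_{\#}\eta$-invariant'' fails for non-saturated $\mc{F}$. Take $n=2$, $\varphi(e_1)=e_2\,\dd t$ and $\varphi(e_2)=0$. The subsheaf generated by $e_1$ and $xe_2$ is $\eta$-invariant, since $\eta(e_1)=2xe_2\,\dd x$. But its pushforward, generated by $e_1$ and $te_2$, is not $\varphi$-invariant.
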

\begin{proof}
At the level of vector bundles this is \zcref{pullback and pushforward are equivalences}. We need to show that the two compositions act trivially also on the Higgs fields. Indeed we get
\begin{gather}
\varphi \xmapsto{\pi^{\#}} n \, \pi^* \varphi \otimes s^{n-1} \xmapsto{\pi_{\#}} \frac{1}{n} \, \pi_*(n \, \pi^* \varphi \otimes s^{n-1}) = \varphi\\
\eta \xmapsto{\pi_{\#}} \frac{1}{n} \, \pi_* \eta \xmapsto{\pi^{\#}} n \, \pi^*(\frac{1}{n} \, \pi_* \eta) \otimes s^{n-1} = n \, \pi^* \psi \otimes s^{n-1} = \eta,
\end{gather}
which proves the first statement. As in \zcref{pullback and pushforward are equivalences}, (semi)stability is preserved trivially.
\end{proof}

The Dolbeault moduli space is defined to be the moduli stack parametrising semistable Higgs bundles, i.e.
\begin{equation}
\mc{M}_{n, Dol}^d(X) \coloneqq \Higgs_n^{d, \, ss}(X), \quad \mc{M}_{\underline{m}, Dol}^d(\XX) \coloneqq \Higgs_{\underline{m}}^{d, \, ss}(\XX).
\end{equation}
After twisting \eqref{eq: eqv higgs bundles} with the tautological line bundle on $\XX$, we get a version of \zcref{thm: equivalence for Bun} for the Dolbeault moduli space.
\begin{thm} \label{thm: equivalence for Higgs}
The \textit{twisted} pullback and pushforward functors, defined by
\begin{equation} \label{equivalence dol}
\begin{tikzcd}
	{\Higgs_n^d(X)} &&&& {\Higgs^0_{n \cdot w(-d)}(\mc{X}).}
	\arrow["{(\pi^*, \pi^{\#})(-) \otimes \O(-\tfrac{d}{n}p)}", shift left=2, from=1-1, to=1-5]
	\arrow["{(\pi_*, \pi_{\#})(\O(-\tfrac{d}{n}p) \otimes -)}", shift left=2, from=1-5, to=1-1]
\end{tikzcd}
\end{equation}
define inverse equivalences of algebraic stacks. These equivalences preserve (semi)stability, hence they restrict to equivalences
\begin{equation}\label{eq: equivalence of dolbeault moduli spaces}
\begin{tikzcd}
	{\mc{M}_{n, Dol}^d(X)} &&&& {\mc{M}^0_{n \cdot w(-d), Dol}(\mc{X}).}
	\arrow["{(\pi^*, \pi^{\#})(-) \otimes \O(-\tfrac{d}{n}p)}", shift left=2, from=1-1, to=1-5]
	\arrow["{(\pi_*, \pi_{\#})(\O(-\tfrac{d}{n}p) \otimes -)}", shift left=2, from=1-5, to=1-1]
\end{tikzcd}
\end{equation}
\end{thm}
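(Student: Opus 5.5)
The plan is to factor the twisted functors as the untwisted equivalence of \zcref{lemma Dol} followed by tensoring with a line bundle. In other words, mirror the proof of \zcref{thm: equivalence for Bun}. By \zcref{lemma Dol}, $(\pi^*,\pi^{\#})$ and $(\pi_*,\pi_{\#})$ are inverse equivalences between $\Higgs_n^d(X)$ and $\Higgs^d_{n\cdot w(0)}(\XX)$, and they preserve (semi)stability. So it remains to show that tensoring with $\O(-\tfrac{d}{n}p)=\O(\tfrac1n p)^{\otimes -d}$ is an equivalence
\begin{equation}
\Higgs^d_{n\cdot w(0)}(\XX)\xrightarrow{\cong}\Higgs^0_{n\cdot w(-d)}(\XX),
\end{equation}
with inverse given by tensoring with $\O(\tfrac{d}{n}p)$, and that it preserves (semi)stability.

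First I will say how tensoring with a line bundle $\mc{L}$ acts on Higgs fields. Given $(\mc{E},\eta)$, set $\eta_{\mc{L}} \coloneqq \eta\otimes\id_{\mc{L}}$, viewed as a map $\mc{E}\otimes\mc{L}\to\mc{E}\otimes\Omega_{\XX}\otimes\mc{L}\cong(\mc{E}\otimes\mc{L})\otimes\Omega_{\XX}$. This map is $\O_{\XX}$-linear, so it is again a Higgs field. The construction is functorial and compatible with base change, so it defines a morphism of stacks. Tensoring with $\mc{L}$ and then with $\mc{L}^{-1}$ returns $\eta$, via the canonical isomorphism $\mc{L}\otimes\mc{L}^{-1}\cong\O$. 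The underlying vector bundles then behave exactly as in \zcref{thm: equivalence for Bun}: the multiplicity vector shifts by $-d$ by \zcref{tensor shift}, and the degree drops by $d$. The stated formulas for the twisted functors are precisely these composites, since pullback and pushforward of the tensored data are by definition applied to $\O(-\tfrac dn p)\otimes -$.

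For (semi)stability of Higgs bundles I will argue as follows. Higgs subsheaves, meaning $\eta$-invariant subsheaves $\mc{F}\subseteq\mc{E}$, correspond bijectively under $-\otimes\mc{L}$, because $\eta(\mc{F})\subseteq\mc{F}\otimes\Omega_{\XX}$ holds if and only if $\eta_{\mc{L}}(\mc{F}\otimes\mc{L})\subseteq\mc{F}\otimes\mc{L}\otimes\Omega_{\XX}$. Every slope is shifted by the same constant $-\tfrac dn$, as computed in the proof of \zcref{thm: equivalence for Bun}, so the slope inequalities are preserved. Restricting to semistable loci then gives \eqref{eq: equivalence of dolbeault moduli spaces}.

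I expect the only mildly delicate step to be checking that the identifications involved are canonical enough to give equivalences of stacks, and not just of groupoids of $\Cb$-points. Concretely, this means the isomorphism $\Omega_{\XX}\otimes\mc{L}\cong\mc{L}\otimes\Omega_{\XX}$ and the unit/counit isomorphisms of \zcref{lemma Dol} must be natural in families over a base scheme $T$. I would handle this by running the argument of \zcref{lemma Dol} over $\XX\times T\to X\times T$. This works because \zcref{omega mc(X)} and the isomorphism $\pi_*\O(\tfrac{n-1}{n}p)\cong\O_X$ are compatible with flat base change.
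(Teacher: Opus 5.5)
Your proposal is correct and is essentially the paper's argument: the paper also reduces to the untwisted equivalence of \zcref{lemma Dol}, then notes, as in \zcref{thm: equivalence for Bun}, that tensoring with $\O(-\tfrac{d}{n}p)$ shifts the weight at $p$ by $-d$, the degree by $-d$ and all slopes by $-\tfrac{d}{n}$; your treatment of the twisted Higgs field, of $\eta$-invariant subsheaves and of families only fills in details the paper leaves implicit. One caveat: the inverse you actually construct is $(\pi_*,\pi_{\#})(\O(\tfrac{d}{n}p)\otimes -)$, which is the correct inverse, so you should not claim it coincides with the printed formula $(\pi_*,\pi_{\#})(\O(-\tfrac{d}{n}p)\otimes -)$, which has a sign typo (as printed it would send $\Higgs^0_{n\cdot w(-d)}(\XX)$ to multiplicity $n\cdot w(-2d)$ and degree $-d$ before pushing forward).
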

\begin{proof}
As discussed in \zcref{thm: equivalence for Bun}, twisting with $\O(-\tfrac{d}{n}p)$ shifts the weight at $p$ by $-d$. It also shifts the degree by $-d$ and thus the slope by $-\tfrac{d}{n}$, preserving semistability.
\end{proof}

This concluded the proof of Theorem \ref{intro thm: equivalence Higgs bundles}, presented in the introduction. In the next section we shall proceed with the study of the Betti moduli space and the proof of Theorem \ref{intro thm: twisted character variety}.
\section{De Rham and Betti moduli spaces}\label{section: De Rham and Betti moduli spaces}
In the previous section we considered the Dolbeault moduli space of the stacky curve $\XX$. We now come to the study of the two other moduli spaces appearing in the non abelian Hodge correspondence, namely the De Rham and Betti moduli spaces.
These are defined as follows
\begin{gather}
\mc{M}_{n, DR}(\mc{X}) \coloneqq \left< (\mc{E}, \nabla) \mymid \mc{E}\in \Bun_n(\mc{X}), \; \nabla\colon \mc{E} \to \mc{E}\otimes \Omega_{\XX} \text{ is a flat connection} \right> \\
\mc{M}_{n, B}(\mc{X}) \coloneqq [\Hom(\pi_1(\XX), GL_n) / GL_n].
\end{gather}
The Riemann-Hilbert correspondence establishes a natural isomorphism in the analytic topology
\begin{equation}
\mc{M}_{n, DR}(\mc{X})^{\text{an}} \cong \mc{M}_{n, B}(\mc{X})^{\text{an}}.
\end{equation}

As already observed for the moduli space of vector bundles and for the Dolbeault moduli space, also the de Rham moduli space admits a decomposition in connected components
\begin{equation}
\mc{M}_{n, DR}(\mc{X}) = \coprod_{(\underline{m}, d)} \mc{M}_{\underline{m}, DR}^d(\mc{X}),
\end{equation}
with the latter union ranging over the pairs $(\underline{m} = m_0 w(0)+ \dots m_{n-1} w(n-1), \; d)$ such that $\sum_{i=0}^{n-1} m_{i} = n$.
{Recall that for a vector bundle to admit a flat connection it is a necessary condition that $\deg \mc{E} = 0$.} Therefore we get
\begin{equation} \label{eq: decomposition of MDR}
\mc{M}_{n, DR}(\mc{X}) = \coprod_{\underline{m}} \mc{M}_{\underline{m}, DR}^0(\mc{X}).
\end{equation}
The connected component that we will need to consider is $\mc{M}_{n \cdot w(-d), \, DR}^0(\XX)$. In analogy with \zcref{thm: equivalence for Higgs}, we denote this also as $\mc{M}_{n, DR}^d(X)$ and think of this as the De Rham moduli space for degree $d$ vector bundles on $X$.
We will now give a more explicit description of the Betti moduli stack, allowing us to identify the $n \cdot w(-d)$-stratum in $\mc{M}_{n, DR}(\XX)$ with representations of $\pi_1(\XX)$ with a prescribed structure, thus proving Theorem \ref{intro thm: twisted character variety}.

\subsection{The fundamental group}
In order to decompose the Betti moduli space in connected components, we first give an explicit presentation of the fundamental group $\pi_1(\XX)$. As expected, this is obtained from the fundamental group of a curve by adding a cyclic element corresponding to monodromy around the stacky point. For completeness, we report how to find this presentation using a Seifert--Van Kampen decomposition.

Fix a point $x \in \XX \setminus \{p\}$. By $\pi_1(\XX, x)$ we mean the fundamental group of the complex analytic orbifold underlying the stacky curve $\XX$. This is equivalent to considering $\XX$ as a topological stack with respect to the analytic topology and taking Noohi's fundamental group, defined in \cite{Noohi-foundations}.

Since $\XX$ is connected, locally path connected and semilocally $1$-connected, following Noohi's approach we get that $\pi_1(\XX, x)$ can be computed from the covering spaces of $\XX$, as the automorphism group of the fiber functor over $x$, see \cite[{}18.13, 18.19, 18.22]{Noohi-foundations}. This is analogous to Grothendieck's theory of Galois categories, presented in \cite[]{SGA1-IX}.

The advantage of computing the fundamental group using covering spaces is that we easily get a version of the Seifert--Van Kampen theorem. This is completely analogous to the classical setting of Galois categories, as exposed for example in \cite[Theorem 4.7.7]{Douady2020}. However, since an explicit version of this theorem in our setting seems to be missing, in \zcref{appendix: Seifert--Van Kampen Theorem} we report the proof for completeness.

Using this theorem we can compute the fundamental group of the stacky curve $\XX$.
\begin{prop}\label{prop: fundamental group of stacky curve}
The fundamental group of the stacky curve $\mc{X}$ is
\begin{equation} \label{eq: fundamental group of stacky curve}
\pi_1(\XX) = \left\langle a_i, \; b_i, \; c \mymid \prod_{i = 1}^g [a_i, \, b_i] = c, \; c^n = 1 \right\rangle .
\end{equation}
\end{prop}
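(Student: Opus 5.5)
We will compute $\pi_1(\XX, x)$ with the Seifert--Van Kampen theorem for topological stacks proved in \zcref{appendix: Seifert--Van Kampen Theorem}, applied to an open cover of $\XX$ by two pieces. Choose a small analytic disc $D \subseteq X$ centred at $p$ with $x \notin D$. Set $\mc{V} \coloneqq \pi^{-1}(D)$ and $\mc{U} \coloneqq \XX \setminus \{p\}$, which by \eqref{X-p} is isomorphic to $X \setminus \{p\}$. Both are connected. Their intersection $\mc{U} \cap \mc{V} \cong D \setminus \{p\}$ is a punctured disc, so it is connected and has fundamental group $\Zb$. We move the basepoint to a point of $\mc{U}\cap\mc{V}$ and transport back along a path in $\mc{U}$; this does not change the isomorphism class of the group. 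The theorem then gives
\[
\pi_1(\XX) \cong \pi_1(\mc{U}) \ast_{\pi_1(\mc{U}\cap\mc{V})} \pi_1(\mc{V}).
\]

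The three groups and the two maps are as follows.

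\textbf{The open $\mc{U}$.} It is a genus $g$ surface minus a point, so $\pi_1(\mc{U})$ is free on $a_1, b_1, \dots, a_g, b_g$. The generator $\gamma$ of $\pi_1(\mc{U}\cap\mc{V})$, a small loop around $p$, maps to $\prod_{i=1}^g [a_i,b_i]$. This is the classical polygon picture, in which the boundary of the fundamental $4g$-gon is the loop around the removed point.

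\textbf{The open $\mc{V}$.} Using the local chart from the proof of \zcref{omega mc(X)}, $\mc{V}$ is isomorphic to the quotient stack $[\Db/\mu_n]$, where $\Db$ is a disc with coordinate $x$, $\mu_n$ acts by multiplication, and the coarse map is $x \mapsto x^n$. The map $\Db \to [\Db/\mu_n]$ is a $\mu_n$-torsor, hence a Galois covering with group $\mu_n$, and $\Db$ is simply connected. By Noohi's covering theory, $\Db$ is therefore the universal cover and $\pi_1(\mc{V}) \cong \mu_n \cong \Zb/n\Zb$. Write $c$ for its generator.

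\textbf{The map from the intersection to $\mc{V}$.} The loop $\gamma$ in $D\setminus\{p\}$ lifts to $\Db\setminus\{0\}$ as a path from $x_0$ to $\zeta_n x_0$, because the covering $x \mapsto x^n$ has degree $n$ there. Its monodromy in the $\mu_n$-torsor is therefore a generator, so $\gamma \mapsto c$, after choosing $c$ suitably.

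Combining these, the amalgamated product has generators $a_i, b_i, c$ and relations $c^n=1$ and $\prod_{i=1}^g[a_i,b_i]=c$. This is exactly the presentation \eqref{eq: fundamental group of stacky curve}.

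The main obstacle is making the local computation rigorous in the stacky setting. We need two facts: that $\pi_1([\Db/\mu_n]) \cong \mu_n$ in Noohi's sense, and that the inclusion of the punctured disc sends the generator $\gamma$ to a generator $c$ rather than to some other power. To handle this, I would describe the covering spaces of $[\Db/\mu_n]$ explicitly as $\mu_n$-sets, since $\Db$ is simply connected. Then I would compare fiber functors: restricting a covering to $\mc{U}\cap\mc{V}$ is pullback of the $\mu_n$-torsor $\Db\setminus\{0\}\to D\setminus\{p\}$, which is the connected degree-$n$ cover $x\mapsto x^n$. This identifies the homomorphism $\Zb\to\Zb/n\Zb$ as the natural surjection.
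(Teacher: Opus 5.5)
Your proposal is correct and follows essentially the same route as the paper: it uses the same two-piece cover of $\XX$ by $[\Db/\mu_n]$ and $\XX\setminus\{p\}$, glued along a punctured disc, identifies $\pi_1([\Db/\mu_n])\cong\mu_n$ via the universal cover $\Db$ and Noohi's covering theory, and then applies the appendix Seifert--Van Kampen theorem. Your lifting argument through the degree-$n$ cover $x\mapsto x^n$ of the punctured disc, showing that the loop around $p$ maps to a generator of $\mu_n$, makes explicit a step the paper only asserts.
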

\begin{proof}
Locally in the analytic topology, the point $p \in X$ admits a neighborhood biholomorphic to the open unit disk $U \cong \Db \subseteq \Cb$.
Then
\begin{equation}
\mc{X}\vert_U \cong [\Db/\mu_n].
\end{equation}
Let then $\mc{U}_1 \coloneqq \mc{X}\vert_U = \pi^{-1}(U)$, $\mc{U}_2 \coloneqq  \mc{X} \setminus \{ p\}$ and take a point $x \in \mc{U}_1 \cap \mc{U}_2 \eqqcolon \mc{V}$.
By the isomorphism $\XX \setminus \left\{ p \right\} \cong X \setminus  \left\{ p \right\}$ observed in \eqref{X-p}, the substack $\mc{U}_2$ is a punctured curve and therefore
\[
\pi_1(\mc{U}_2, x) = \langle a_i, b_i \mid i =1, \dots g \rangle.
\]
In order to compute $\pi_1(\mc{U}_1, x)$, notice that the atlas $q \colon \Db \to [\Db/\mu_n]$. is a covering map. Since $\Db$ is simply connected, this is the universal cover of $\mc{U}_1$. By \cite[Corollary 18.20]{Noohi-foundations} the fundamental group can be identified with the group of deck transformations of the universal cover. Since $q$ is a $\mu_n$-bundle, this shows that
\[
\pi_1(\mc{U}_1, x) = \mu_n =  \langle c \mid c^n=1 \rangle.
\]
Finally, the intersection $\mc{V}$ is isomorphic to the punctured disk, therefore
\[
\pi_1(\Vc, x) = \langle d \rangle.
\]
Now we can use the Seifert--Van Kampen theorem to compute the fundamental group of $\mc{X}$. Indeed, the inclusions $j_1\colon \mc{V} \to \mc{U}_1$ and $j_2 \colon \mc{V} \to \mc{U}_2$ induce
\begin{align}
j_{1*}\colon \pi_1(\mc{V}, x) & \to \pi_1(\mc{U}_1, x)\\
d & \mapsto c,
\end{align}
\begin{align}
j_{2*}\colon \pi_1(\mc{V}, x) & \to \pi_1(\mc{U}_2, x)\\
d & \mapsto \prod_{i = 1}^g [a_i, b_i].
\end{align}
Taking the pushout we get the above presentation of $\pi_1(\XX)$.
\end{proof}

As a consequence, we get an explicit description of the Betti moduli space of $\XX$.
\begin{cor} \label{Betti moduli space}
The Betti moduli space of the stacky curve $\XX$ can be expressed as a quotient of an affine variety as
\begin{equation}
\mc{M}_{n, B}(\XX) \cong \left[ \bigg\{ ((A_i, B_i)_{i = 1}^g, \, C) \in GL_n^{2g +1} \mymid \prod_{i=1}^{g}[A_i, B_i] = C, \, C^n = Id \bigg\} / GL_n \right]. \label{eq: betti moduli space}
\end{equation}
\end{cor}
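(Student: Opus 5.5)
The corollary follows directly from Proposition~\ref{prop: fundamental group of stacky curve} and the definition $\mc{M}_{n, B}(\XX) = [\Hom(\pi_1(\XX), GL_n)/GL_n]$. The plan is to identify the representation variety $\Hom(\pi_1(\XX), GL_n)$ with the closed subvariety of $GL_n^{2g+1}$ given in \eqref{eq: betti moduli space}. We then check that the conjugation actions match, so that the quotient stacks agree.

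\emph{Step 1: the representation variety.} Since $\pi_1(\XX)$ is finitely presented by \eqref{eq: fundamental group of stacky curve}, a homomorphism $\rho \colon \pi_1(\XX) \to GL_n(R)$, for any $\Cb$-algebra $R$, is determined by the images $A_i = \rho(a_i)$, $B_i = \rho(b_i)$ and $C = \rho(c)$. Conversely, such a tuple defines a homomorphism precisely when the relators are sent to the identity, namely
\begin{equation}
\prod_{i=1}^g [A_i, B_i] = C, \qquad C^n = Id.
\end{equation}
This gives a natural bijection, functorial in $R$, between $\Hom(\pi_1(\XX), GL_n(R))$ and the $R$-points of the closed subscheme of $GL_n^{2g+1}$ cut out by these matrix equations. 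Hence the representation scheme is this subscheme, and it is affine because $GL_n^{2g+1}$ is affine.

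\emph{Step 2: the action.} $GL_n$ acts on homomorphisms by post-composition with conjugation. Under the identification of Step 1 this becomes simultaneous conjugation of $(A_i, B_i, C)$. The action preserves the defining equations, so the quotient stacks are isomorphic.

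\emph{Possible obstacles.} The only point needing care is well-definedness. The isomorphism depends on the choice of base point $x$ and of the generators in the Seifert--Van Kampen computation, but different choices change $\rho$ only by an automorphism of $\pi_1(\XX)$ or by conjugation. This is harmless for the statement, which asserts an isomorphism rather than a canonical one.

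One might also worry whether the scheme structure should be reduced. We take $\Hom(\pi_1(\XX), GL_n)$ to mean the representation scheme, defined by its functor of points. With that convention the equations themselves define the scheme structure, and no extra argument is required.
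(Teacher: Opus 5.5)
Your proposal is correct and matches the paper, which gives no separate proof: it obtains the corollary immediately by substituting the presentation of $\pi_1(\XX)$ from the Seifert--Van Kampen computation into the definition $\mc{M}_{n,B}(\XX) = [\Hom(\pi_1(\XX), GL_n)/GL_n]$, exactly as in your Steps 1 and 2. Your extra remarks on the functor-of-points description of the representation scheme and on independence of the choices of base point and generators make explicit what the paper leaves implicit.
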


\subsection{The monodromy representation}
We now want to understand what the component $\mc{M}_{n \cdot w(-d), DR}^0(\XX)$ corresponds to in $\mc{M}_{n, B}(\XX)$ under the Riemann-Hilbert correspondence
\begin{equation} \label{Riemann -Hilbert}
\mc{M}_{n, DR}(\mc{X})^{\text{an}} \cong \mc{M}_{n, B}(\mc{X})^{\text{an}},
\end{equation}
according to the description of $\mc{M}_{n, B}(\XX)$ given in \zcref{Betti moduli space}.

The isomorphism in \zcref{Riemann -Hilbert} is obtained by mapping a pair $(\mc{E}, \nabla)$ to the associated monodromy representation at $x$, where the pair is considered as a flat bundle with respect to the underlying analytic structure. In particular, the matrices $A_i, \, B_i, \, C$ in \eqref{eq: betti moduli space} are obtained by the action of the generators $a_i, \, b_i, \, c$ of the fundamental group of $\mc{X}$, as expressed in \zcref{prop: fundamental group of stacky curve}, and are well defined up to conjugation.

The multiplicity vector $\underline{m}(\mc{E})$ depends only on the behavior of $\mc{E}$ at the stacky point $p$, and therefore can be computed in a neighborhood of $p$ isomorphic to $[\Db / \mu_n]$. In particular, looking back at the Seifert-Van Kampen computation of $\pi_1(\mc{X}, x)$ in \zcref{prop: fundamental group of stacky curve}, we see that the decomposition \eqref{eq: decomposition of MDR} will translate on the Betti moduli space only in terms of conditions on the matrix $C$.

After restricting to $[\Db/ \mu_n]$, we can find the matrix $C$ as obtained by the monodromy action of the generator of $\pi_1([\Db / \mu_n], x) \cong \mu_n$ on $(\mc{E}\vert_{[\Db / \mu_n]}, \nabla\vert_{[\Db / \mu_n]})$.
\begin{prop} \label{prop: monodromy representation}
Let $(\mc{E}, \nabla)$ be a vector bundle with a flat connection on $[\Db / \mu_n]$. Then the representation of $\pi_1([\Db / \mu_n], x) \cong \mu_n$ obtained by the monodromy action on $(\mc{E}, \nabla)$ coincides with the representation obtained as $\mc{E}\vert_{B \mu_n}$.
\end{prop}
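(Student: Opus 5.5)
Since $[\Db/\mu_n]$ has the atlas $q\colon \Db \to [\Db/\mu_n]$, which by the proof of \zcref{prop: fundamental group of stacky curve} is the universal cover with deck group $\mu_n$, the plan is to work $\mu_n$-equivariantly on $\Db$. A vector bundle with flat connection $(\mc{E},\nabla)$ on $[\Db/\mu_n]$ is the same as a $\mu_n$-equivariant vector bundle $E = q^*\mc{E}$ on $\Db$ with a $\mu_n$-invariant flat connection $\nabla' = q^*\nabla$. The action on $\Db$ is $\zeta\cdot z = \zeta z$. The stacky point $B\mu_n$ is the image of the fixed point $0 \in \Db$. Hence $\mc{E}\vert_{B\mu_n}$ is the fiber $E_0$ together with the action of the stabilizer $\mu_n$ coming from the equivariant structure.

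First I will trivialize $E$ by flat sections. Since $\Db$ is simply connected, the sheaf of $\nabla'$-flat sections is a constant local system $V$, and $E \cong V \otimes_{\Cb} \O_{\Db}$ with $\nabla' = \id_V\otimes \dd$. Because $\nabla'$ is $\mu_n$-invariant, the equivariant structure $\zeta^*E \cong E$ carries flat sections to flat sections. It therefore induces a linear action $\rho\colon \mu_n \to GL(V)$, and $E \cong V\otimes \O_{\Db}$ equivariantly, with $\mu_n$ acting diagonally (via $\rho$ on $V$ and by pullback of functions).

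Next I compute both representations in terms of $\rho$. For the fiber at the stacky point, evaluating flat sections at $0$ gives $V \xrightarrow{\cong} E_0$. Since $0$ is fixed, this isomorphism is $\mu_n$-equivariant, so $\mc{E}\vert_{B\mu_n} \cong (V,\rho)$. For the monodromy at the base point $x$ (the image of $\tilde x \in \Db\setminus\{0\}$), I use the description of $\pi_1$ via deck transformations of the universal cover, as in \cite[Corollary 18.20]{Noohi-foundations}. A loop corresponding to $\zeta$ lifts to a path from $\tilde x$ to $\zeta\tilde x$. Parallel transport along it is evaluation of flat sections at $\tilde x$ followed by evaluation at $\zeta \tilde x$, which is the identity on $V$. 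Identifying $E_{\zeta\tilde x}$ back with $E_{\tilde x} = \mc{E}_x$ uses the equivariant structure. So the monodromy of $\zeta$ is $\rho(\zeta)$, or $\rho(\zeta)^{-1}$ depending on conventions. Both representations are thus isomorphic to $(V,\rho)$, up to possibly its dual, i.e.\ up to $\zeta\mapsto\zeta^{-1}$.

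The main obstacle is this sign/convention bookkeeping. I need to match the identification $\pi_1([\Db/\mu_n],x)\cong\mu_n$ (which generator $c$ corresponds to which deck transformation, and whether monodromy is a left or right action) with the convention defining the weights $w(i)$ of $\mc{E}\vert_{B\mu_n}$. I will fix these conventions so that they agree: define the isomorphism with $\mu_n$ so that the deck transformation $\zeta$ acts on fibers via the equivariant structure in the same direction as the stabilizer action at $0$. If necessary, I will check the result on the tautological bundle $\O(\tfrac1n p)$, which has weight $w(1)$ by construction. A secondary point is making rigorous that the analytic equivariant bundle with invariant flat connection on $\Db$ is the same data as $(\mc{E},\nabla)$ on the stack; this is descent along the étale atlas $q$, together with the fact that connections pull back along étale maps, as in the proof of \zcref{omega mc(X)}.
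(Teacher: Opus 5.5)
Your proposal is correct and is essentially the paper's argument: pull back to the universal cover $\Db$, trivialize by flat sections so that the equivariant structure becomes a constant representation $\rho$, read off the monodromy as this $\rho$, and identify it with the isotropy representation by evaluating at the fixed point $0$ (the paper's $\sigma = \mathrm{ev}_0 \circ \rho = \rho$). Your explicit treatment of the action as diagonal/semilinear and of the $\rho$ versus $\rho^{-1}$ convention is something the paper passes over silently, and fixing that convention as you propose (for instance by checking against $\O(\tfrac{1}{n}p)$, which has weight $w(1)$) is the right way to make the identification precise.
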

\begin{proof}
The statement amounts to observing that both representations come from the equivariant structure induced by the atlas
\begin{equation}
p \colon \Db \to [\Db / \mu_n].
\end{equation}
Since $\Db$ is simply connected, this is the universal bundle and the monodromy action $(\mc{E}, \nabla)$ is the $\mu_n$-action on the pullback $(p^*\mc{E}, p^*\nabla)$. This is the trivial bundle with the trivial connection, so we get
\begin{equation}
\rho\colon  \mu_n \to \Aut_{\O(\Db)}(\O(\Db)^n, \dd) = GL_n(\Cb) \subseteq GL_n(\O(\Db)).
\end{equation}
On the other side, by restricting to $B\mu_n$ and pulling back to the universal bundle, we get a vector bundle on the point $\{0\}$ with a $\mu_n$-equivariant structure, i.e. a representation
\begin{equation}
\sigma\colon \mu_n \to GL_n(\Cb).
\end{equation}
To show that $\rho = \sigma$, notice that the inclusion $\{0\} \subseteq \Db$ pulls back to the morphism of evaluation at $0$
\begin{equation}
\text{ev}_0 \colon GL_n(\O(\Db)) \to GL_n (\Cb),
\end{equation}
so that $\sigma = \text{ev}_0 \circ \rho = \rho$.
\end{proof}

\begin{cor}\label{cor: betti moduli space decomposition}
Under the identification \eqref{eq: betti moduli space}, the connected component of the Betti moduli space corresponding to the multiplicity vector $\underline{m}= m_0 \, w(0) + \dots + m_{n-1} \, w(n-1)$ is
\begin{equation}
\mc{M}_{\underline{m}, B}(\XX) \cong \left[ \bigg\{ (A_i, B_i)_{i = 1}^g \in GL_n^{2g} \mymid \prod_{i=1}^{g}[A_i, B_i] = C_{\underline{m}} \bigg\} / GL_n \right],
\end{equation}
where $C_{\underline{m}}$ is a matrix representing the action of the generator of $\mu_n$ in the representation with multiplicity vector $\underline{m}$. In particular, $C_{\underline{m}}$ can be taken as a diagonal matrix with exactly $m_l$ entries equal to $\mathrm{e}^{\frac{2 \pi \mathrm{i}}{n} l}$.
\end{cor}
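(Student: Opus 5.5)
The plan is to start from the explicit description in \zcref{Betti moduli space}, which presents $\mc{M}_{n, B}(\XX)$ as the quotient by $GL_n$ of the variety of tuples $((A_i,B_i)_{i=1}^g, C)$ with $\prod_i [A_i,B_i] = C$ and $C^n = Id$. Since $C$ is determined by the $A_i, B_i$, we can eliminate it. The locus is then $\{(A_i,B_i) \mid (\prod_i [A_i,B_i])^n = Id\}$, with $GL_n$ acting by simultaneous conjugation.

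The first step is to identify which part of this locus corresponds to a fixed multiplicity vector $\underline{m}$. Under Riemann--Hilbert \eqref{Riemann -Hilbert}, the matrix $C$ is the image of the generator $c$ of $\pi_1(\mc{U}_1,x)\cong\mu_n$, i.e.\ the monodromy of the restriction of $(\mc{E},\nabla)$ to the neighbourhood $[\Db/\mu_n]$ of $p$. By \zcref{prop: monodromy representation}, this monodromy representation of $\mu_n$ is isomorphic to $\mc{E}\vert_{B\mu_n}$. Its isomorphism class is therefore exactly $\underline{m}(\mc{E})$. So the $\underline{m}$-component is the locus where $C$ defines a $\mu_n$-representation with multiplicity vector $\underline{m}$. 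Equivalently, $C$ is diagonalizable (automatic since $C^n=Id$) with eigenvalue $\mathrm{e}^{2\pi\mathrm{i} l/n}$ of multiplicity $m_l$. This is the conjugacy class $\mathcal{O}_{\underline{m}}$ of the diagonal matrix $C_{\underline{m}}$, which is closed and open in $\{C : C^n = Id\}$, since the eigenvalue multiplicities are locally constant on this finite union of semisimple classes.

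The second step is to rigidify the conjugacy class of $C$. Write $Z=Z_{GL_n}(C_{\underline{m}})$, so that $\mathcal{O}_{\underline{m}}\cong GL_n/Z$. Consider the map
\[
\{(A_i,B_i,C) \mid \textstyle\prod_i[A_i,B_i]=C\in\mathcal{O}_{\underline{m}}\}\to \mathcal{O}_{\underline{m}}.
\]
It is $GL_n$-equivariant, so the total space is $GL_n\times^{Z}F$, where $F$ is the fiber over $C_{\underline{m}}$. This gives
\[
\left[\{\ldots\}/GL_n\right]\cong [F/Z].
\]
The statement as written takes the quotient of $F=\{(A_i,B_i)\mid\prod_i[A_i,B_i]=C_{\underline{m}}\}$ by $GL_n$. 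The clean identity is $[F/Z]$, and it agrees with the stated form exactly when $C_{\underline{m}}$ is central, i.e.\ $Z=GL_n$. This is the case of interest: for $\underline{m}=n\cdot w(-d)$ we get $C=\mathrm{e}^{-2\pi\mathrm{i}d/n}Id$, which yields Theorem~\ref{intro thm: twisted character variety}. In that case $\mathcal{O}_{\underline{m}}$ is a single point, the condition is literally $\prod_i[A_i,B_i]=C_{\underline{m}}$, and the quotient is by all of $GL_n$.

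For general $\underline{m}$ I would interpret the displayed formula with the stabilizer of $C_{\underline{m}}$ acting, or equivalently read it as ``the product ranges over the class of $C_{\underline{m}}$''. I would make this explicit rather than gloss over it.

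The main obstacle I expect is justifying that the component of the Betti side matching $\mc{M}^0_{\underline{m},DR}$ is exactly the preimage of $\mathcal{O}_{\underline{m}}$. This requires the Riemann--Hilbert isomorphism to be compatible with restriction to $\mc{U}_1=[\Db/\mu_n]$ and with the Seifert--Van Kampen identification of $c$ as the image of the generator of $\pi_1(\mc{U}_1,x)$. That compatibility is naturality of monodromy under open restriction, combined with \zcref{prop: monodromy representation}. After that, the locally-constant argument above shows these loci are unions of connected components matching the decomposition \eqref{eq: decomposition of MDR}.
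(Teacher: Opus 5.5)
Your argument is correct and is essentially the paper's own argument, which the paper leaves implicit: by the Seifert--Van Kampen presentation only the matrix $C$ is constrained, and Proposition~\ref{prop: monodromy representation} identifies the $\mu_n$-representation generated by $C$ with $\mathcal{E}\vert_{B\mu_n}$, so the $\underline{m}$-locus is the preimage of the conjugacy class of $C_{\underline{m}}$. Your rigidification step is a genuine correction that the paper passes over: for non-scalar $C_{\underline{m}}$ the group $GL_n$ does not preserve $\{\prod_i[A_i,B_i]=C_{\underline{m}}\}$, so the correct quotient is by the centralizer $Z_{GL_n}(C_{\underline{m}})$; this is harmless for $\underline{m}=n\cdot w(-d)$, where $C_{\underline{m}}$ is scalar and which is the only case used afterwards.
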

In particular, the case we are interested in is $\underline{m} = n \cdot w(-d)$.
\begin{cor}
The Riemann-Hilbert correspondence \eqref{Riemann -Hilbert} maps the weight $-d$ component of the De Rham moduli space $\mc{M}_{n \cdot w(-d), DR}(\XX)$ to
\begin{equation}
\mc{M}_{n \cdot w(-d), B}(\XX) \cong \left[ \bigg\{ (A_i, B_i)_{i = 1}^g \in GL_n^{2g} \mymid \prod_{i=1}^{g}[A_i, B_i] = \mathrm{e}^{-\frac{2 \pi \mathrm{i}}{n} d} Id \bigg\} / GL_n \right].
\end{equation}
\end{cor}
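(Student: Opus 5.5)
The corollary follows by specializing \zcref{cor: betti moduli space decomposition} to $\underline{m} = n\cdot w(-d)$, combined with the fact that the Riemann--Hilbert correspondence respects the decomposition by multiplicity vectors. The proof has two steps.

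\textbf{Step 1: the Riemann--Hilbert correspondence preserves the multiplicity vector.} Let $(\mc{E},\nabla)$ lie in $\mc{M}^0_{n\cdot w(-d),DR}(\XX)$, and let $\rho$ be its monodromy representation based at $x$. By the Seifert--Van Kampen description in \zcref{prop: fundamental group of stacky curve}, the generator $c$ is the image of the generator of $\pi_1(\mc{U}_1,x)\cong\mu_n$, where $\mc{U}_1\cong[\Db/\mu_n]$. Hence $\rho(c)$ is the monodromy of the restriction $(\mc{E},\nabla)\vert_{[\Db/\mu_n]}$. By \zcref{prop: monodromy representation}, this $\mu_n$-representation coincides with $\mc{E}\vert_{B\mu_n}$. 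Its multiplicity vector is $n\cdot w(-d)$, so the representation is isotypic of weight $-d$.

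This step needs one small compatibility check. The generator $c\in\pi_1$ must correspond to the standard generator $\mathrm{e}^{2\pi\mathrm{i}/n}$ of $\mu_n$, so that weight $l$ gives eigenvalue $\mathrm{e}^{2\pi\mathrm{i} l/n}$, as in the normalization of \zcref{cor: betti moduli space decomposition}. This also handles the base-point change from the fibre over $0$ to $x$: the representation is only defined up to conjugation, which is harmless here.

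\textbf{Step 2: identify $C$ and conclude.} An isotypic representation of weight $-d$ acts on $\Cb^n$ by the scalar $\zeta^{-d}$, where $\zeta$ is the chosen generator. Therefore
\[
C=\rho(c)=\mathrm{e}^{-\frac{2\pi\mathrm{i}}{n}d}\,Id .
\]
Because $C$ is central, its conjugacy class is a single point, so no choice of diagonal representative is needed. The condition $C^n=Id$ holds automatically, and $C$ is uniquely determined by the $A_i,B_i$ through the relation $\prod[A_i,B_i]=C$. Projecting away the $C$-coordinate in \zcref{Betti moduli space} therefore identifies the corresponding locus with the stack in the statement, and this identification is $GL_n$-equivariant.

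For the image to be the whole component, the converse inclusion must also hold. Every representation with $C=\mathrm{e}^{-2\pi\mathrm{i}d/n}Id$ comes, under Riemann--Hilbert, from some flat bundle. By Step 1 applied in reverse, that flat bundle has $\mc{E}\vert_{B\mu_n}$ isotypic of weight $-d$. Its degree is $0$ because it carries a flat connection, so it lies in $\mc{M}^0_{n\cdot w(-d),DR}(\XX)$. Since Riemann--Hilbert is a bijection compatible with both decompositions, the component maps isomorphically (analytically) onto the twisted character variety.
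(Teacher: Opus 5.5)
Your proposal is correct and follows the paper's argument: the corollary is the specialization of the Betti decomposition by multiplicity vectors to $\underline{m} = n\cdot w(-d)$. There, the monodromy proposition identifies $\rho(c)$ with the $\mu_n$-representation $\mc{E}\vert_{B\mu_n}$, so $C_{\underline{m}}$ is the scalar $\mathrm{e}^{-\frac{2\pi\mathrm{i}}{n}d}\,Id$; your extra remarks (centrality of $C$ removing any choice of representative in the $GL_n$-quotient, and the converse inclusion via bijectivity of Riemann--Hilbert) only make explicit what the paper leaves implicit.
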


This is Theorem \ref{intro thm: twisted character variety} presented in the introduction, showing that the twisted Betti moduli space of the curve $X$ is a component of the Betti moduli space of the stacky curve $\XX$.
\section{The non abelian Hodge correspondence}\label{section: The non abelian Hodge correspondence}
The non abelian Hodge correspondence, due to Hitchin, Corlette and Simpson, establishes an homeomorphism between the analytic varieties underlying the Dolbeault, De Rham and Betti moduli spaces on a smooth projective variety. The classical argument relies on differential methods, comparing the differential equations satisfied by the operators defining the various structures on a vector bundle. 
In \cite[]{simpson-local_systems_on_proper} Simpson extended the correspondence to smooth and proper Deligne-Mumford stacks by relying on the classical case and using simplicial resolutions. Namely, he showed that there are homeomorphisms
\begin{equation}
M_{Dol}(\mc{Y}) \cong M_{DR}(\mc{Y}) \cong M_{Hdg}(\mc{Y})
\end{equation}
for a smooth proper Deligne-Mumford stack $\mc{Y}$ over $\Cb$. This in particular applies to the case of the stacky curve that we are studying, inducing isomorphisms in singular cohomology
\begin{equation} \label{eq: iso cohom}
H^*(M_{Dol}(\XX), \Qb) \cong H^*(M_{DR}(\XX), \Qb) \cong H^*(M_{Hdg}(\XX), \Qb).
\end{equation}

However the isomorphisms in \zcref{eq: iso cohom} can be obtained directly by including the Dolbeault and De Rham moduli spaces in the moduli space of $\lambda$-connections on $\XX$, as we shall explain in this section. The advantage of this approach is that the isomorphisms \eqref{eq: iso cohom} are obtained algebraically, with the only analytic input being the Riemann-Hilbert correspondence. In particular, since the isomorphism
\begin{equation}
H^*(M_{Dol}(\XX), \Qb) \cong H^*(M_{DR}(\XX), \Qb)
\end{equation}
is obtained algebraically, it is actually an isomorphism of Hodge structures.

The key idea, due to Simpson, is realizing the Dolbeault and De Rham moduli spaces as the special and generic fiber inside the Hodge moduli space, i.e. the moduli space of vector bundles with $\lambda$-connections.

\begin{defi}
Let $\mc{E}$ be a vector bundle on $\XX$. For a fixed complex number $\lambda$, define a \textit{$\lambda$-connection} $\mc{E}$ to be a $\Cb$-linear morphism $\nabla\colon \mc{E} \to \mc{E} \otimes \Omega_{\XX}$, respecting the Leibniz rule up to the scalar $\lambda$, namely
\[
\nabla(f \, s) = f \, \nabla(s) + \lambda \, s \otimes \dd f
\]
for all sections $f \in \O_{\XX}(U)$ and $s \in \mc{E}(U)$.
Define the \textit{Hodge moduli space} of $\XX$ to be the moduli stack parametrizing semistable triples $(\mc{E}, \lambda, \nabla)$, where $\mc{E}$ is a vector bundle on $\XX$, $\nabla$ is a $\lambda$-connection on $\mc{E}$ and $\mc{E}$ is slope-semistable with respect to $\nabla$. Explicitly,
\begin{equation}
\mc{M}_{\underline{m}, Hdg}^0(\XX) \coloneqq \left< (\mc{E}, \lambda, \nabla) \mymid
\begin{gathered}
\mc{E}\in \Bun_{\underline{m}}^0(\XX), \; \lambda \in \Ab^1, \; \nabla\colon \mc{E} \to \mc{E}\otimes \Omega_{\XX} \text{ is a } \lambda-\text{connection}, \\
(\mc{E}, \nabla) \text{ is semistable}
\end{gathered}
\right>.
\end{equation}
\end{defi}

\begin{rmk}
The Hodge moduli space $\mc{M}_{\underline{m}, Hdg}^0(\XX)$ has the following properties.
\begin{enumerate}[label=(\roman*)]
	\item The multiplicative group $\Gb_m$ acts on $\mc{M}_{\underline{m}, Hdg}^0(\XX)$ by
	\[
	t \cdot (\Ec, \lambda, \nabla) = (\Ec, t \lambda, t \nabla).
	\]
	\item The projection
    \begin{align}
	\pi\colon \mc{M}_{\underline{m}, Hdg}^0(\XX) &\to \Ab^1 \\
	(\mc{E}, \lambda, \nabla) &\mapsto \lambda,
	\end{align}
    is $\Gb_m$-equivariant, where $\Gb_m$ acts on $\Ab^1$ with weight one.
	\item For $\lambda \neq 0$ all of the fibers $\pi^{-1}(\lambda)$ are isomorphic to $\pi^{-1}(1) = \mc{M}_{\underline{m}, DR}^0(\XX)$, while $\pi^{-1}(0)= \mc{M}_{\underline{m}, Dol}^0(\XX)$.
\end{enumerate}
\end{rmk}

Summing up, we have the following diagram
\begin{equation} \label{diagram Hodge}
    \begin{tikzcd}[ampersand replacement=\&, column sep=large, row sep=huge]
        {\mc{M}_{\underline{m}, Dol}^0(\XX)} \& {\mc{M}_{\underline{m},Hdg}^0(\XX)} \& {\mc{M}_{\underline{m}, DR}^0(\XX)} \& {\mc{M}_{\underline{m},B}(\XX)} \\
        {\{ 0 \}} \& {\Ab^1} \& {\{ 1 \},}
        \arrow["i", hook, from=1-1, to=1-2]
        \arrow[from=1-1, to=2-1]
        \arrow["\lrcorner"{anchor=center, pos=0.125}, draw=none, from=1-1, to=2-2]
        \arrow["\pi", from=1-2, to=2-2]
        \arrow["j"', hook', from=1-3, to=1-2]
        \arrow["\RH", "\cong"', from=1-3, to=1-4]
        \arrow["\lrcorner"{anchor=center, pos=0.125, rotate=-90}, draw=none, from=1-3, to=2-2]
        \arrow[from=1-3, to=2-3]
        \arrow[from=2-1, to=2-2]
        \arrow[from=2-3, to=2-2]
    \end{tikzcd}
\end{equation}
where $R\!H$ is defined analytically via the Riemann-Hilbert correspondence and $\pi$ is a trivial fibration over $\Ab^1 \setminus \{0\}$.

Restrict now to the case $\underline{m} = n \cdot w(-d)$ and suppose that $n$ and $d$ are coprime. This hypothesis ensures that a point $(\mc{E}, \lambda, \nabla)$ is semistable if and only if it is stable. Consequently, the coarse moduli space $M_{n \cdot w(-d), Hdg}^0(\XX)$ is smooth, the morphism $\pi$ is smooth and the moduli map
\begin{equation}
\mc{M}_{n \cdot w(-d), Hdg}^0(\XX) \to M_{n \cdot w(-d), Hdg}^0(\XX)
\end{equation}
is a $\Gb_m$-gerbe.

The diagram \eqref{diagram Hodge} induces diagram of smooth varieties after passing to the coarse moduli spaces
\begin{equation}\label{moculi spaces diagram}
\begin{tikzcd}[row sep=huge]
	{M_{n \cdot w(-d), Dol}^0(\XX)} & {M_{n \cdot w(-d), Hdg}^{0}(\XX)} & {M_{n \cdot w(-d), DR}^0(\XX)} & {M_{n \cdot w(-d), B}(\XX)} \\
	{\{ 0 \}} & {\Ab^1} & {\{ 1 \}.}
	\arrow["i", hook, from=1-1, to=1-2]
	\arrow[from=1-1, to=2-1]
	\arrow["\lrcorner"{anchor=center, pos=0.125}, draw=none, from=1-1, to=2-2]
	\arrow["\pi", from=1-2, to=2-2]
	\arrow["j"', hook', from=1-3, to=1-2]
	\arrow["\RH", "\cong"', from=1-3, to=1-4]
	\arrow["\lrcorner"{anchor=center, pos=0.125, rotate=-90}, draw=none, from=1-3, to=2-2]
	\arrow[from=1-3, to=2-3]
	\arrow[from=2-1, to=2-2]
	\arrow[from=2-3, to=2-2]
\end{tikzcd}
\end{equation}
To show that $i$ and $j$ induce isomorphisms in cohomology we want to use the Bia\l{y}nicki-Birula decomposition of $M_{n \cdot w(-d), Hdg}^{0}(\XX)$, similarly to what Nakajima did for point counting in the appendix to \cite{appendix-by-nakajima}.

In \cite[]{Hausel-RodriguezVillega} Hausel and Rodriguez-Villega showed for a variety to admit a Bia\l{y}nicki-Birula decomposition it doesn't need to be projective, but a weaker notion is sufficient.
\begin{defi}\label{defi: semiprojective}
A complex quasi projective variety $M$ with an action of the multiplicative group $\Gb_m$ is \textit{semiprojective} if
\begin{enumerate}[label={(\roman*)}, ref={\thedefi.(\roman*)}]
	\item \label{semiprojecive i} the fixed-point locus $M^{\Gb_m}$ is proper;
	\item \label{semiprojecive ii} for all $x \in M$ the limit point $\displaystyle \lim_{t \to 0} t \cdot x$ exists, meaning that the $\Gb_m$-equivariant morphism 
	\begin{align}
	\Gb_m &\to M \\
	 1 &\mapsto x
	\end{align}
	can be extended to $\Ab^1$.
\end{enumerate}
\end{defi}

Using \cite[Theorem 6.10]{simpson-local_systems_on_proper}, we know that $M_{n, Hdg}^0(\XX)$ is quasi projective, as is the stratum $M_{n \cdot w(-d), Hdg}^{0}(\XX)$. We want to show that it is semiprojective. \\
The property \ref{semiprojecive i} follows from the properness of the Hitchin fibration on $\Higgs_n^d(X)$.\\
For the property \ref{semiprojecive ii}, notice that if we do not require the limit point to be semistable, the existence of limit points is clear because the action scales the connection to the zero Higgs field. To get a semistable Higgs bundle, it suffices to use a semistable reduction argument, analogous to Langton's algorithm for vector bundles. This argument was used in \cite[Corollary 10.2]{Simpson-Hodge-filtration} to show the existence of limit points in the Dolbeault moduli space of a smooth projective variety, restricting to the case of vector bundles with vanishing Chern classes. Using our approach this can be shown for $\XX$ and therefore for vector bundles of non zero degree on $X$.

\begin{prop}
For any $\Cb$-point $x$ of $\mc{M}_{n \cdot w(-d),Hdg}^0(\XX)$ the limit point $\displaystyle \lim_{t \to 0} t \cdot x$ exists.
\end{prop}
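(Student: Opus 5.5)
The plan is to reduce to a semistable-reduction (Langton-type) argument for $\lambda$-connections, following Simpson's proof of \cite[Corollary 10.2]{Simpson-Hodge-filtration}, and to carry it out on the stacky curve $\XX$. First I treat the trivial case. If $x = (\mc{E}, 0, \varphi)$ lies in the Dolbeault fibre, the $\Gb_m$-orbit is $(\mc{E}, 0, t\varphi)$. Limits exist here by properness of the Hitchin map on $\mc{M}^0_{n\cdot w(-d), Dol}(\XX) \cong \mc{M}^d_{n,Dol}(X)$ (\zcref{thm: equivalence for Higgs}) together with $\Gb_m$-equivariance: the image of $t\cdot x$ in the Hitchin base tends to $0$, so the orbit lies in a proper set and extends over $t=0$. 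So I may assume $\lambda \neq 0$, and after rescaling, $\lambda = 1$ and $x = (\mc{E}, \nabla)$ is a flat bundle.

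Consider the family over $\Ab^1 \setminus \{0\} = \Spec \Cb[t, t^{-1}]$ given by $(\mc{E}, t, t\nabla)$. Its obvious extension to $\Ab^1$ is the constant bundle $\mc{E}_{\Ab^1}$ with the $t$-connection $t\nabla$. The special fibre of this extension is $(\mc{E}, 0, 0)$, which is a Higgs bundle but in general not semistable, because $\mc{E}$ itself need not be semistable as a bundle. The task is to modify the family at $t=0$ by elementary transformations until the special fibre is semistable, with the modification happening inside $\XX \times \Ab^1$. Working over the DVR $R = \Cb[t]_{(t)}$ is enough, since the resulting extension over $\Spec R$ determines the limit in the separated coarse space. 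The argument needs the following ingredients on $\XX$, and each of them follows either from $X$ via the dictionary of \zcref{section: vector bundles} or directly from the formal properties of $\Bun(\XX)$.
\begin{enumerate}[label=(\alph*)]
\item Harder--Narasimhan filtrations for Higgs sheaves on $\XX$, using the rational degree on $K(\XX)$. These exist because $\deg$ is additive and Higgs-subsheaves of a fixed bundle have bounded degree.
\item Saturated subsheaves of vector bundles on $\XX$ are again vector bundles, since $\XX$ is a smooth one-dimensional DM stack.
\item Elementary modifications along the divisor $\XX \times \{0\}$ preserve the multiplicity vector, because it is locally constant in flat families by \zcref{connected components of the moduli stack}. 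The modified family therefore stays in the $n\cdot w(-d)$ component with degree $0$.
\end{enumerate}

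The core step is Langton's algorithm. Let $\mc{F}_0$ be the special fibre and $\mc{B} \subset \mc{F}_0$ its maximal destabilizing Higgs subsheaf, which is $\varphi$-invariant. Replace the family $\mc{F}$ by $\mc{F}' = \ker(\mc{F} \to \mc{F}_0/\mc{B})$. This is again a $t$-connection family: the $t$-connection preserves $\mc{F}'$ because the Leibniz term is divisible by $t$ and $\mc{B}$ is preserved by the residual Higgs field. The generic fibre is unchanged. As in Langton, one then shows that the pair (maximal slope, rank of $\mc{B}$) decreases lexicographically, and that it cannot be constant forever. An infinite sequence would produce a $t$-adically convergent sub-$\lambda$-connection of the generic fibre destabilizing it, contradicting the stability of $(\mc{E},\nabla)$, which is automatic for flat bundles of degree $0$ with $\gcd(n,d)=1$ as noted above. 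Since slopes lie in the discrete set $\tfrac{1}{n\cdot\mathrm{rk}}\Zb$, the process terminates with a semistable special fibre, which is the desired limit.

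The main obstacle is this termination step. The infinite-sequence argument uses completeness of $R$ and a limit of the quot-schemes $\mathrm{Quot}(\mc{E})$ on $\XX$. I would handle it by transporting everything to the coarse curve, or to the $\mu_n$-equivariant setting on a local chart $[\Db/\mu_n]$: Quot schemes on $\XX$ are projective because $\XX$ is a tame proper DM stack (Olsson--Starr). With that in hand, Langton's original argument goes through verbatim.
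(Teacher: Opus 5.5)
Your proposal is correct, but it takes a different route from the paper.

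\textbf{The paper's route.} The paper never runs Langton's algorithm by hand. Instead it:
\begin{enumerate}
\item enlarges $\mc{M}^0_{n\cdot w(-d),Hdg}(\XX)$ to the stack $\mc{N}$ of all $\lambda$-connections, where limits exist trivially, since $t\mapsto(\mc{E},t\lambda,t\nabla)$ extends by $(\mc{E},0,0)$;
\item observes that, by coprimality, every unstable point of $\mc{N}$ lies over $\lambda=0$;
\item transports the Harder--Narasimhan stratification of $\Higgs^d_n(X)$ through \zcref{thm: equivalence for Higgs} to a $\Theta$-stratification of $\mc{N}$ whose semistable locus is $\mc{M}^0_{n\cdot w(-d),Hdg}(\XX)$;
\item quotes the abstract semistable reduction theorem \cite[Theorem 6.5]{semistable-reduction}.
\end{enumerate}

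\textbf{Your route.} Your argument is essentially that black box unwound in this special case, in the form Simpson used for \cite[Corollary 10.2]{Simpson-Hodge-filtration}. Its components are the elementary modification $\ker(\mc{F}\to\mc{F}_0/\mc{B})$, a lexicographically non-increasing invariant (``decreases'' in your write-up should read ``is non-increasing''), and the completeness/Quot-scheme argument ruling out an eventually constant sequence. The contradiction comes from stability of the generic fibre over $\Cb((t))$. Your remark that this stability is automatic for $\gcd(n,d)=1$ is correct: a sub-connection of rank $r$ has multiplicity $r\cdot w(-d)$, so its degree lies in $\Zb+\tfrac{r d}{n}$ up to sign, which cannot vanish for $0<r<n$. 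This is exactly where the paper's coprimality hypothesis enters.

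\textbf{Comparison.} The paper's proof is much shorter and treats both fibres uniformly. However, it rests on the asserted but unverified claim that the transported stratification is a genuine (well-ordered) $\Theta$-stratification of all of $\mc{N}$, not just of its $\lambda=0$ fibre. Your proof instead requires checking the Langton ingredients on $\XX$: HN filtrations, saturated subsheaves being subbundles, and projectivity of Quot schemes on the tame stack. Alternatively, you can transport these to $X$ via the twisted equivalence, which works because every subbundle of a special fibre has multiplicity $r\cdot w(-d)$. Each of these inputs is standard, so the argument is self-contained. Your separate treatment of the Dolbeault fibre via properness of the Hitchin map is also fine.
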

\begin{proof}
Let $\mc{N}$ be the moduli stack parametrizing vector bundles on $\XX$ with a $\lambda$-connection, without any stability assumptions. Notice that over $\lambda \neq 1$ the stack $\mc{N}$ coincides with $\mc{M}_{n \cdot w(-d),Hdg}^0(\XX)$ because flat bundles are stable.
Over $\lambda = 0$ instead, $\mc{M}_{n \cdot w(-d), Dol}^0(\XX)$ has been enlarged to $\Higgs_{n \cdot w(-d)}^0(\XX)$.
The $\Gb_m$-action on $\mc{M}_{n \cdot w(-d),Hdg}^0(\XX)$ extends to an action on $\mc{N}$, for which limit points in $0$ exist. Indeed, for any $k$-point $(\mc{E}, \lambda, \nabla)$ the orbit
\begin{align}
\Gb_m &\longrightarrow \mc{N} \\
t &\mapsto (\mc{E}, t \lambda, t\nabla )
\end{align}
can be extended to $\Ab^1$ by $0 \mapsto (\mc{E}, 0, 0)$.\\
Using the equivalence $\Higgs_{n \cdot w(-d)}^0(\XX) \cong \Higgs_n^d(X)$, and the fact that the unstable locus in $\mc{N}$ is contained in $\Higgs_{n \cdot w(-d)}^0(\XX)$, we get that the Harder-Narasimhan stratification of $\Higgs_n^d(X)$ induces a $\Theta$-stratification of $\mc{N}$, whose semistable locus is exactly $\mc{M}_{n \cdot w(-d), Hdg}^0(\XX)$.
Then using the semistable reduction theorem \cite[Theorem 6.5]{semistable-reduction} we get that the existence of limit points in $\mc{N}$ implies the existence of limit points in $\mc{M}_{n \cdot w(-d), Hdg}^0(\XX)$.
\end{proof}

\begin{cor}
The coarse moduli space $M_{n \cdot w(-d), Hdg}^0(X)$ is a semiprojective variety.
\end{cor}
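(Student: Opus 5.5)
We verify the two conditions of \zcref{defi: semiprojective} for the coarse moduli space $M = M_{n \cdot w(-d), Hdg}^0(\XX)$, which is quasi projective by \cite[Theorem 6.10]{simpson-local_systems_on_proper} and carries the $\Gb_m$-action $t \cdot (\Ec, \lambda, \nabla) = (\Ec, t\lambda, t\nabla)$ descended from the stack. Since $n$ and $d$ are coprime, the moduli map $\mc{M}_{n \cdot w(-d), Hdg}^0(\XX) \to M$ is a $\Gb_m$-gerbe. The action on $\Gb_m$-automorphisms is by scalars, which commute with everything, so the action descends to $M$ and the moduli map is $\Gb_m$-equivariant.

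\emph{Existence of limits.} Let $x \in M(\Cb)$ and lift it to a $\Cb$-point $\tilde x$ of the stack; this is possible because $\Cb$ is algebraically closed, so the gerbe has a section over $\Spec \Cb$. The previous proposition gives a $\Gb_m$-equivariant extension $\Ab^1 \to \mc{M}_{n \cdot w(-d), Hdg}^0(\XX)$ of the orbit of $\tilde x$. Composing with the moduli map yields an equivariant morphism $\Ab^1 \to M$ extending the orbit map $t \mapsto t \cdot x$, so $\lim_{t \to 0} t\cdot x$ exists.

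\emph{Properness of the fixed locus.} A fixed point $(\Ec,\lambda,\nabla)$ satisfies $t\lambda = \lambda$ for all $t$, so $\lambda = 0$. Hence $M^{\Gb_m}$ is a closed subvariety of the fiber $\pi^{-1}(0) = M_{n \cdot w(-d), Dol}^0(\XX)$. By \zcref{thm: equivalence for Higgs} this fiber is isomorphic to $M_{n, Dol}^d(X)$, compatibly with the scaling action on Higgs fields, since the twisted pullback is linear in $\varphi$. The Hitchin map $h \colon M_{n,Dol}^d(X) \to \bigoplus_i H^0(X, \Omega_X^{\otimes i})$ is proper and $\Gb_m$-equivariant, where the base carries weights $i \geq 1$. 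Therefore every fixed point lies in $h^{-1}(0)$, the nilpotent cone, which is proper. As a closed subset of a proper variety, $M^{\Gb_m}$ is proper.

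\emph{Main obstacle.} The delicate step is the passage from stack to coarse space. One must check that the limit in the stack, which the previous proposition provides, maps to a limit in $M$. This requires the moduli map to be equivariant and the coarse space to be separated, so that limits are unique. Both hold because $M$ is a quasi projective variety and the action on the stack is compatible with the gerbe structure.
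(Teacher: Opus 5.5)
Your proposal is correct and follows the same route as the paper: quasi-projectivity from Simpson's result, condition (i) from properness of the Hitchin fibration on $\Higgs_n^d(X)$ transported through \zcref{thm: equivalence for Higgs}, and condition (ii) from the preceding proposition on existence of limits. What you add is detail the paper leaves implicit. You show that fixed points lie over $\lambda = 0$ and in the nilpotent cone, because the Hitchin weights are positive. You also show that stacky limits descend to the coarse space by equivariance of the moduli map; separatedness is only needed for uniqueness of limits, not for their existence.
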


The fibers of a semiprojective variety with respect to a smooth $\Gb_m$-equivariant morphism 
\begin{equation}
M \to \Ab^1
\end{equation}
all have cohomology isomorphic to that of the total space. For the zero-fiber $M_0$ this is shown by comparing the Bia\l{y}nicki-Birula stratifications of the total space $M$ and of the fiber $M_0$. For the generic fiber this is shown using the Gysin sequence of the inclusion $M_0 \hookrightarrow M$ and the triviality of $M$ over $\Gb_m$. This argument is explained in detail in \cite[Theorem B.1]{motives-of-higgs-bundles}, see also \cite[Corollary 1.3.3]{Hausel-RodriguezVillega}.
Applying this result to our setting, we get the following theorem.

\begin{thm} \label{thm: isomorphic cohomologies}
The morphisms
\begin{gather}
i^*\colon H^*(M_{n \cdot w(-d), Hdg}^{0}(\XX), \Qb) \to H^*(M_{n \cdot w(-d), Dol}^0(\XX), \Qb), \\
j^*\colon  H^*(M_{n \cdot w(-d), Hdg}^{0}(\XX), \Qb) \to H^*(M_{n \cdot w(-d), DR}^0(\XX), \Qb)
\end{gather}
induced by the inclusions $i$ and $j$ are isomorphisms of pure mixed Hodge structures.
\end{thm}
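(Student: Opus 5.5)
The plan is to apply the general result on semiprojective varieties, \cite[Theorem B.1]{motives-of-higgs-bundles} (see also \cite[Corollary 1.3.3]{Hausel-RodriguezVillega}), to $M \coloneqq M_{n \cdot w(-d), Hdg}^{0}(\XX)$ with the $\Gb_m$-equivariant map $\pi\colon M \to \Ab^1$ of \eqref{moculi spaces diagram}. Three hypotheses must be checked. First, $M$ is smooth and $\pi$ is a smooth morphism, which follows from coprimality of $n$ and $d$: stability equals semistability, so the moduli map is a $\Gb_m$-gerbe over a smooth coarse space. Second, $M$ is semiprojective, which is the preceding Corollary. Third, $\pi$ is equivariant for the weight one action on $\Ab^1$, and $M$ is $\Gb_m$-equivariantly trivial over $\Gb_m$, with fiber $M_{n \cdot w(-d), DR}^0(\XX)$, via $(\Ec,\lambda,\nabla)\mapsto(\Ec,\lambda^{-1}\nabla)$.

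For $i^*$, I would compare Bia\l{}ynicki-Birula decompositions. The fixed locus $M^{\Gb_m}$ lies in $\pi^{-1}(0) = M_{n\cdot w(-d),Dol}^0(\XX)$ because the action scales $\lambda$. Both $M$ and $M_0$ retract onto the union of attracting cells, and the cells of $M$ are affine bundles over those of $M_0$ with fiber rank one higher, owing to the extra normal direction given by $\lambda$. A comparison then shows that $i^*$ is an isomorphism, and also that both cohomologies are pure, since each cohomology is built from the cohomology of the smooth proper fixed components, Tate-twisted by the ranks of the affine bundles. For $j^*$, I would use the triviality $M|_{\Gb_m}\cong M_1\times\Gb_m$ and the Gysin (localization) sequence for $M_0\hookrightarrow M$ with complement $M_1\times\Gb_m$. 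Purity of $H^*(M)$ and $H^*(M_0)$, together with weight considerations on $H^*(M_1\times\Gb_m)=H^*(M_1)\otimes H^*(\Gb_m)$, forces the connecting maps to split in such a way that restriction $H^*(M)\to H^*(M_1)$ is an isomorphism.

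Every map involved is induced by algebraic morphisms, so the isomorphisms respect mixed Hodge structures. Purity of the target then follows from purity of $H^*(M)$.

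I expect the main obstacle to be the $j^*$ step: extracting the isomorphism from the Gysin sequence requires carefully matching weights, namely showing that the purely weight-$k$ part of $H^k(M_1\times\Gb_m)$ is exactly $H^k(M_1)$. On this step I would follow \cite[Theorem B.1]{motives-of-higgs-bundles} verbatim rather than reprove it. The smoothness of $\pi$ must also be verified; I would first try to deduce it from the smoothness of the Hodge moduli stack over $\Ab^1$ in the stable locus, using the vanishing of the obstruction $\mathbb{H}^2$ by duality for stable objects.
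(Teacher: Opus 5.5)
Your proposal is correct and is essentially the paper's own argument: it applies \cite[Theorem B.1]{motives-of-higgs-bundles} (see also \cite[Corollary 1.3.3]{Hausel-RodriguezVillega}) to $M = M_{n \cdot w(-d), Hdg}^{0}(\XX)$, with smoothness coming from coprimality of $n$ and $d$ and semiprojectivity from the preceding corollary, a Bia\l{}ynicki-Birula comparison for $i^*$, and the Gysin sequence together with triviality over $\Gb_m$ for $j^*$. There is one small slip in your sketch of $i^*$: $M$ and $M_0$ retract onto their common core $\{x \mid \lim_{t\to\infty} t\cdot x \text{ exists}\}$, which lies in $M_0$ because $\pi(t\cdot x)=t\,\pi(x)$; they do not retract onto the union of the attracting cells, since that union is all of $M$.
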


Using the comparison between degree $0$ bundles on $\XX$ and degree $d$ bundles on $X$ proven in \zcref{section: Higgs bundles and Dolbeault moduli space,section: De Rham and Betti moduli spaces}, we get the following.

\begin{cor}
Composing the inclusions $i$ and $j$ with the twisted pullback and pushforward along $\XX \to X$,
we get an isomorphism of cohomology rings
\begin{equation}
H^*(M_{n, Dol}^d(X), \Qb) \cong H^*(M_{n, DR}^d(X), \Qb),
\end{equation}
inducing isomorphism of mixed Hodge structures on all the graded pieces. Moreover, composing the Riemann-Hilbert correspondence on $\XX$ with the twisted pullback functor we get an isomorphism
\begin{equation}
H^*(M_{n, DR}^d(X), \Qb) \cong H^*(M_{n, B}^d(X), \Qb),
\end{equation}
where $M_{n, B}^d(X)$ is the \textit{twisted character variety}
\begin{equation}
\bigg\{ (A_i, B_i)_{i = 1}^g \in GL_n^{2g} \;\bigg|\; \prod_{i=1}^{g}[A_i, B_i] = \mathrm{e}^{-\frac{2 \pi \mathrm{i}}{n} d} Id \bigg\} \modd GL_n.
\end{equation}
\end{cor}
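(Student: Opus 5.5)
The plan is to assemble the corollary from three ingredients already in place: the equivalence of Dolbeault moduli spaces in \zcref{thm: equivalence for Higgs}, the cohomological isomorphisms of \zcref{thm: isomorphic cohomologies}, and the identification of the Betti component in \zcref{cor: betti moduli space decomposition}. Throughout we keep the standing assumption that $n$ and $d$ are coprime, so that all the stacks involved are $\Gb_m$-gerbes over smooth coarse moduli spaces and we may work with coarse spaces.

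\textbf{Dolbeault side.} The equivalence of stacks \eqref{eq: equivalence of dolbeault moduli spaces} is algebraic. It therefore descends to an isomorphism of coarse moduli spaces $M_{n, Dol}^d(X) \cong M^0_{n \cdot w(-d), Dol}(\XX)$, since coarse spaces are functorial for isomorphisms of stacks. Its pullback on cohomology is a ring isomorphism compatible with mixed Hodge structures. We compose this with $(i^*)^{-1}$ and then $j^*$ from \zcref{thm: isomorphic cohomologies}. Both are restrictions along algebraic inclusions, so they are ring homomorphisms and morphisms of mixed Hodge structures, and they are bijective by that theorem. Recalling that by definition $M_{n, DR}^d(X) \coloneqq M^0_{n\cdot w(-d), DR}(\XX)$, this yields the first isomorphism, which respects the Hodge structure on each graded piece.

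\textbf{Betti side.} Riemann--Hilbert gives a complex-analytic isomorphism of stacks $\mc{M}_{DR}(\XX)^{an}\cong \mc{M}_B(\XX)^{an}$. The first step is to check that it sends the component with multiplicity vector $n\cdot w(-d)$ onto the corresponding Betti component. This follows from \zcref{prop: monodromy representation}: the monodromy of $c$ equals the $\mu_n$-representation at the stacky point. Indeed, $c$ acts on $n\cdot w(-d)$ by the scalar $\mathrm{e}^{-2\pi \mathrm{i} d/n}$, so $C = \mathrm{e}^{-2\pi\mathrm{i}d/n}Id$ is central and the condition $C^n = Id$ is automatic. The resulting component is the twisted character variety, as in the final corollary of the previous section.

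Next, the analytic isomorphism descends to the coarse (GIT) spaces. Both coarse spaces are categorical quotients and stability matches: on the de Rham side stability is automatic by coprimality, and on the Betti side every representation with this central twist is irreducible. Hence the isomorphism descends to a homeomorphism, indeed a biholomorphism, $M_{n,DR}^d(X)^{an}\cong M^d_{n,B}(X)^{an}$. This induces a ring isomorphism on singular cohomology with $\Qb$-coefficients; it is not claimed to respect Hodge structures.

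\textbf{Main obstacle.} The delicate step is the descent of Riemann--Hilbert to coarse moduli spaces. I would handle it through the gerbe structure: both sides are $\Gb_m$-gerbes over their coarse spaces, and the analytic equivalence of stacks is compatible with the scalar $\Gb_m$ inertia. The induced map on coarse analytic spaces is then a bijective holomorphic map between smooth spaces, hence a biholomorphism. Alternatively, one can bypass this by citing Simpson's comparison for smooth proper Deligne--Mumford stacks \cite{simpson-local_systems_on_proper} and restricting to the component.
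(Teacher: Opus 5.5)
Your proposal is correct and follows essentially the same route as the paper. The paper obtains the corollary in one line by combining the isomorphisms $i^*$, $j^*$ of \zcref{thm: isomorphic cohomologies} with the Dolbeault equivalence of \zcref{thm: equivalence for Higgs}, the convention $\mc{M}^d_{n,DR}(X)=\mc{M}^0_{n\cdot w(-d),DR}(\XX)$, and the Riemann--Hilbert identification of the $n\cdot w(-d)$ Betti component with the twisted character variety from \zcref{section: De Rham and Betti moduli spaces}. Your extra steps, namely irreducibility of the twisted representations under coprimality and the descent of Riemann--Hilbert to coarse spaces, supply details the paper takes for granted rather than changing the argument.
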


This corollary can be understood as a cohomological version of the non abelian Hodge correspondence for vector bundles of degree $d$ on $X$.
\section{The Hodge weights}\label{section: The Hodge weights}
In the previous section we described isomorphisms in rational cohomology
\begin{gather}
i^*\colon H^*(M_{n \cdot w(-d), Hdg}^{0}(\XX)) \to H^*(M_{n \cdot w(-d), Dol}^0(\XX)),\\
j^*\colon H^*(M_{n \cdot w(-d), Hdg}^{0}(\XX)) \to H^*(M_{n \cdot w(-d), DR}^0(\XX)),\\
\RH^*\colon H^*(M_{n \cdot w(-d), B}(\XX)) \to H^*(M_{n \cdot w(-d), DR}^0(\XX)).
\end{gather}
These are different in nature, since the morphisms $i$ and $j$ are algebraic, while $\RH$ is only defined analytically. Consequently, while $i^*$ and $j^*$ are isomorphisms of pure mixed Hodge structures, $\RH^*$ doesn't preserve the Hodge structure.

In order to understand how $\RH^*$ acts on the Hodge weights, we first describe generators of the cohomology ring $H^*(M_{n \cdot w(-d), DR}^0(\XX))$ with their weight.
As already observed, we are working with moduli of stable bundles, therefore for all of the stacks considered the coarse moduli map
\begin{equation}
\mc{M} \to M
\end{equation}
is a $\Gb_m$-gerbe. Consequently \cite[Lemma 3.1]{Heinloth-Gm-gerbes} shows that
\begin{equation}
H^*(\mc{M}, \Qb) \cong H^*(M, \Qb)[z],
\end{equation}
where $z$ is the first Chern class of a vector bundle on $\mc{M}$ of weight 1. In particular, studying $\RH^*$ at the level of the stacks or of the coarse moduli spaces is equivalent and in what follows we will alternate between the two.

Beauville showed that the cohomology ring of the moduli space of vector bundles over a curve is generated by the tautological classes of the universal bundle. The argument was then generalized to moduli of Higgs bundles by Markman.\\
Let $(\mc{E}_{\text{univ}}, \lambda_{\text{univ}}, \nabla_{\text{univ}})$ be the universal bundle on $\XX \times \mc{M}_{n \cdot w(-d), Hdg}^0(\XX)$. Pulling back along the inclusions $i$ and $j$, the pair $(\mc{E}_{\text{univ}}, \nabla_{\text{univ}})$ restricts to the universal Higgs bundle $(\mc{E}_{\text{univ}}^{Dol}, \varphi_{\text{univ}})$ and the universal flat bundle $(\mc{E}_{\text{univ}}^{DR}, \nabla_{\text{univ}})$. Therefore, the induced cohomology isomorphism
\begin{equation}
j^* \circ (i^*)^{-1} \colon H^*(M_{n \cdot w(-d), Dol}^0(\XX) \times \XX) \to H^*(M_{n \cdot w(-d), DR}^0(\XX) \times \mc{X}).
\end{equation}
maps the Chern classes of $\mc{E}_{\text{univ}}^{Dol}$ to the Chern classes of $\mc{E}_{\text{univ}}^{DR}$.

Markman showed that the Künneth components of these Chern classes form generators for the cohomology rings.
The following theorem is \cite[Theorem 3]{MARKMAN2006}, see also \cites{markman2001}{Tamas-Hausel}.
\begin{thm}[Markman]
Let $c_i(\mc{E}_{\text{univ}}) \in H^{2i}(M_{n, Dol}^d(X) \times X; \Zb)$ be the $i$-th Chern class of the universal bundle. Write its Künneth decomposition as
\begin{equation}
c_i(\mc{E}_{\text{univ}}) = \sum_{j+k = 2i} e_{i,j} \otimes x_{i,k}.
\end{equation}
Then the Künneth components $e_{i,j}$ generate the cohomology ring of $M_{n, Dol}^d(X)$.
\end{thm}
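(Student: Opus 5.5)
We follow Beauville's diagonal argument, in the form Markman adapted to Higgs bundles. Work on the coarse space $M = M_{n, Dol}^d(X)$, which is smooth of dimension $2N$ when $(n,d)=1$. Let $p_1, p_2 \colon M\times M \to M$ and $p_{13}, p_{23} \colon M\times M\times X \to M\times X$ be the projections. Since the coarse map is a $\Gb_m$-gerbe, the universal bundle $\mc{E}_{\text{univ}}$ exists only up to a line bundle twist. The first step is to fix a normalisation, for instance via a Chern character twisted by a rational power of $\det$, whose Künneth components still lie in the subring generated by the $e_{i,j}$. With this normalisation we form on $M\times M$ the two-term complex
\[
\mc{C}^\bullet \coloneqq Rp_{*}\Bigl( p_{13}^*\mc{E}_{\text{univ}}^\vee\otimes p_{23}^*\mc{E}_{\text{univ}} \xrightarrow{\;[\varphi,\,-]\;} p_{13}^*\mc{E}_{\text{univ}}^\vee\otimes p_{23}^*\mc{E}_{\text{univ}}\otimes\Omega_X \Bigr),
\]
the relative hypercohomology of the Hom-complex between the two universal Higgs bundles. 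By stability and Serre duality for Higgs complexes, $\mathbb{H}^0$ and $\mathbb{H}^2$ vanish off the diagonal. One then shows that, after twisting, $\mathbb{H}^1$ gives a vector bundle $\mc{F}$ of rank $2N$ on $M\times M$ with a section vanishing exactly and transversally along $\Delta$; equivalently, the $K$-theory class of $-\mc{C}^\bullet$ minus a trivial piece has the right Chern classes. Then $c_{2N}(\mc{F}) = [\Delta]$.

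The second step is Grothendieck--Riemann--Roch for $p$. It expresses $\mathrm{ch}(\mc{C}^\bullet)$, and hence $c_{2N}(\mc{F})$, as a polynomial in the classes $p_1^* e_{i,j}$ and $p_2^* e_{k,l}$: we integrate $\mathrm{ch}(\mc{E}_{\text{univ}}^\vee)\,\mathrm{ch}(\mc{E}_{\text{univ}})\,(1-\mathrm{ch}\,\Omega_X)\,\mathrm{td}_X$ over $X$, and only the Künneth components against a basis of $H^*(X)$ survive. We therefore obtain
\[
[\Delta] = \sum_s p_1^*\alpha_s\cup p_2^*\beta_s, \qquad \beta_s \in \Qb[e_{i,j}].
\]

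The third step, which we expect to be the main obstacle, is to make the diagonal act as the identity even though $M$ is not compact. The correspondence $\gamma\mapsto p_{2*}(p_1^*\gamma\cup[\Delta])$ makes sense for compactly supported $\gamma\in H^*_c(M)$, because $\Delta$ is proper over both factors. On such $\gamma$ it acts as the identity on $H^*_c(M)$. Hence every class in $H^*_c(M)$ is a combination of the $\beta_s$ with coefficients $\int_M\gamma\cup\alpha_s$, that is, it lies in the image of $\Qb[e_{i,j}]\to H^*(M)$ composed with the forgetful map. To pass from $H^*_c$ to $H^*$ we use semiprojectivity, established above via the limit-point proposition. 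The Bia\l{}ynicki-Birula decomposition shows that $M$ retracts onto the compact nilpotent cone $\mc{N}=h^{-1}(0)$, and that the natural map $H^*_c(M)\to H^*(M)$ is nonzero in the relevant sense.

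We intend to argue as Markman does. Let $\overline{M}$ be the projective compactification of Hausel--Rodriguez-Villegas type, $\overline{M} = (M\times\Gb_m\sqcup\ldots)/\Gb_m$ or the $\Gb_m$-equivariant completion. On $\overline{M}$ the restriction $H^*(\overline{M})\to H^*(M)$ is surjective, and the Beauville argument applies to a suitably extended family on $\overline{M}$. If extending the universal sheaf to $\overline{M}$ proves awkward, we will instead embed $M$ into the moduli of $\Omega_X(D)$-twisted Higgs bundles for an effective divisor $D$ of large degree. There the twisted Higgs complex has vanishing $\mathbb{H}^2$, so $\mc{F}$ is a genuine bundle. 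We will then show that the restriction from twisted to untwisted moduli is surjective in cohomology, using purity of both cohomologies together with the compatible $\Gb_m$-equivariant retractions. The tautological classes of the twisted moduli space restrict to the $e_{i,j}$, which concludes the proof.
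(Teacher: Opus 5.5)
The paper gives no proof of this statement; it quotes Markman. Your reconstruction has a genuine gap, and it sits exactly at the step you flagged.

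\textbf{The diagonal argument on $M$ yields nothing.} On the non-compact $M$, the correspondence $\gamma\mapsto p_{2*}(p_1^*\gamma\cup[\Delta])$ is not the identity on $H^*_c(M)$. The class $p_1^*\gamma\cup[\Delta]$ is only supported in $\mathrm{supp}(\gamma)\times M$, so the pushforward lands in $H^*(M)$, and the correspondence is the forgetful map $\iota\colon H^*_c(M)\to H^*(M)$. The most your Künneth expression can give is therefore $\mathrm{im}(\iota)\subseteq\Qb[e_{i,j}]$.

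Here $\iota=0$. Splitting off the trace of the Higgs field,
\[
(E,\varphi)\mapsto\bigl((E,\varphi-\tfrac1n\mathrm{tr}(\varphi)\,\id_E),\,\mathrm{tr}(\varphi)\bigr),
\]
gives $M\cong M'\times H^0(X,\Omega_X)\cong M'\times\Cb^g$, and $H^*_c(\Cb^g)\to H^*(\Cb^g)$ vanishes for $g\ge 1$. Semiprojectivity works against you, not for you: the retraction onto the half-dimensional nilpotent cone already kills $\iota$ outside the middle degree.

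Since $\int_{M\times M}[\Delta]\cup p_1^*a\cup p_2^*b=\int_M\iota(a)\cup b$ for $a,b\in H^*_c(M)$, Poincaré duality gives $[\Delta]=0$ in $H^*(M\times M;\Qb)$. Any formula $[\Delta]=c_{2N}(\cdot)$ on $M\times M$ is just a relation $c_{2N}(\cdot)=0$ and says nothing about generators.

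\textbf{The fallbacks do not fix this.} The $\Omega_X(D)$-twisted fallback fails for the same reason: $M_D$ is again non-compact and splits off $H^0(X,\Omega_X(D))\neq 0$, so its diagonal class vanishes too. The claimed surjectivity of $H^*(M_D)\to H^*(M)$ does not follow from ``purity plus compatible retractions''. Step 1 is also wrong as stated. The relative $\mathbb{H}^1$ of the Higgs complex has rank $2n^2(g-1)=2N-2$ off $\Delta$ and $2N$ on $\Delta$, so it is not locally free. In the $\Omega_X(D)$-twisted case $\mathbb{H}^2$ vanishes but $\mathbb{H}^0$ still jumps along $\Delta$. The rank-$2N$ bundle with a transversal section along $\Delta$ is asserted, never constructed.

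\textbf{What is missing is the compact half of the argument,} which you only name (``a suitably extended family on $\overline{M}$''). You need a compactification $\overline{M}\supset M$ that is itself a smooth (or rationally smooth) fine moduli space. Only then do your Steps 1--2 make sense on $\overline{M}\times\overline{M}$, and only then does the diagonal act as the identity on all of $H^*(\overline{M})$. The orbifold $((M\times\Cb)\setminus(\mc{N}\times\{0\}))/\Gb_m$ carries no evident family at infinity on which your Higgs complex is even defined.

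Markman's route is different. He uses the spectral correspondence to realise $M$ as an open subset of a moduli space of stable one-dimensional sheaves on the ruled surface $\mathbb{P}(\O_X\oplus\Omega_X)$. This is a Poisson surface, and the Poisson structure controls $\shExt^2$, which is what makes the diagonal formula work on the compact space. One must then still prove two things:
\begin{enumerate}[label=(\roman*)]
\item surjectivity of $H^*(\overline{M})\to H^*(M)$, e.g.\ via a Bia\l{}ynicki-Birula argument showing $M$ is an open union of strata;
\item via GRR along $T^*X\to X$, that the Künneth components of the universal sheaf on $\overline{M}\times\mathbb{P}(\O_X\oplus\Omega_X)$ restrict to polynomials in the $e_{i,j}$.
\end{enumerate}
None of this is supplied, so the proposal does not establish the theorem.
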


Using the isomorphism $M_{n, Dol}^d(X) \cong M_{n \cdot w(-d)}^0(\XX)$ shown in \zcref{thm: equivalence for Higgs}, we get that the Künneth components of $c_i(\mc{E}_{\text{univ}}^{Dol})$ generate the cohomology ring $H^*(M_{n \cdot w(-d), Dol}^0(\XX))$. Consequently, the Künneth components of $c_i(\mc{E}_{\text{univ}}^{DR})$ generate the cohomology ring $H^*(M_{n \cdot w(-d), DR}^0(\XX))$.

Since moreover the cohomology of $M_{n, Dol}^d(X)$ carries a pure Hodge structure (see \cite[]{mehta2002hodgestructurecohomologymoduli}), as clearly does $X$, the classes $c_i(\mc{E}_{\text{univ}})$ have pure weight $2i$, while the K\"unneth components $e_{i,j}$ have pure weight $j$.

In conclusion, we have that the cohomology ring of $M_{n \cdot w(-d), Dol}^0(\XX)$ (or equivalently the cohomology ring of $M_{n \cdot w(-d), DR}^0(\XX)$) is generated by the Künneth components of the Chern classes of the universal bundle. The cohomology carries a pure Hodge structure and therefore the component $e_{i,j} \in H^j(M_{n \cdot w(-d), Dol}^0(\XX))$ has pure weight $j$.

Finally, consider
\begin{equation}
\RH^* \colon H^*(M_{n \cdot w(-d), DR}^0(\XX)) \to H^*(M_{n \cdot w(-d), B}(\XX)).
\end{equation}
This induces an isomorphism on the cohomology rings, therefore the classes $\RH^* e_{i,j}$ generate the cohomology ring of $M_{n \cdot w(-d), B}(\XX)$. But it is not a morphism of mixed Hodge structures, so it needn't preserve the weights.
In \cite[]{Shende-weights} Shende computed the Hodge weights of the tautological classes $\RH^* e_{i,j}$ in the case of the character variety of a topological space.

\begin{thm}[Shende] \label{thm: shende weights}
Let $Y$ be a complex manifold and $M_B(Y)$ its character variety. Then the tautological classes on $H^*(M_B(Y))$ coming from the Chern classes {$c_i(\mc{E}_{\text{univ}}) \in H^{2i}(M_B(Y) \times Y)$} have pure weight $2i$.
\end{thm}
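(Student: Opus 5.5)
The plan follows Shende's simplicial approach. The aim is to realize both $Y$ and the universal local system by algebraic objects whose weights can be computed directly.

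First, choose a finite triangulation of $Y$, or more generally a finite simplicial set $\Delta_Y$ whose geometric realization is homotopy equivalent to $Y$. Every $\pi_1$-datum is then encoded combinatorially, and we get
\begin{equation}
\mc{M}_B(Y) = [\Hom(\pi_1(Y), GL_n)/GL_n] \cong \shHom_{SSch}(\Delta_Y, BGL_n).
\end{equation}
Here $BGL_n$ is the simplicial scheme with $k$-simplices $GL_n^{k}$, and $\Delta_Y$ is viewed as a constant simplicial scheme. Concretely, a point of the right-hand side assigns a matrix to each edge, subject to the cocycle condition on each $2$-simplex, modulo gauge at the vertices. This is an explicit affine quotient, and it agrees with the left-hand side because the edge-path group of the $2$-skeleton computes $\pi_1$.

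Second, there is a tautological evaluation morphism of simplicial schemes
\begin{equation}
\mathrm{ev}\colon \Delta_Y \times \shHom_{SSch}(\Delta_Y, BGL_n) \to BGL_n .
\end{equation}
Since $\Delta_Y$ is a simplicial object built from points, this is an algebraic morphism. Its geometric realization, after identifying $|\Delta_Y| \simeq Y$, classifies the universal bundle $\mc{E}_{\text{univ}}$ on $Y \times M_B(Y)$: by Riemann--Hilbert, the universal flat bundle corresponds to the universal representation, and the underlying topological bundles agree. Therefore
\begin{equation}
c_i(\mc{E}_{\text{univ}}) = \mathrm{ev}^* c_i,
\end{equation}
where $c_i \in H^{2i}(BGL_n)$ is the universal Chern class. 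By Deligne's mixed Hodge theory on simplicial schemes, $H^{2i}(BGL_n)$ is pure of weight $2i$ (Hodge type $(i,i)$), and the pullback $\mathrm{ev}^*$ is a morphism of mixed Hodge structures. Hence $c_i(\mc{E}_{\text{univ}})$ has weight $2i$ in $H^*(\Delta_Y \times M_B)$.

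Third, take the Künneth decomposition. The cohomology of the constant simplicial scheme $\Delta_Y$ is pure of weight $0$ in every degree, because each level is a disjoint union of points. Via Künneth, which is compatible with mixed Hodge structures, the components satisfy $e_{i,j} \in H^j(M_B)$ with $x_{i,2i-j}$ of weight $0$. Pairing against a dual basis of $H_*(\Delta_Y)$ is again a map of weight $0$, so each $e_{i,j}$ is pure of weight $2i$.

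The main obstacle is the second step: justifying that the algebraic evaluation map realizes the same topological class as the analytically defined universal bundle. I would handle this by comparing both to the classifying map $Y \times M_B^{an} \to BGL_n(\Cb)$ on geometric realizations. The realization of the universal cocycle classifies the flat bundle with that monodromy, and Riemann--Hilbert identifies this flat bundle with $\mc{E}_{\text{univ}}$. A secondary point is passing between the stack and the coarse space, which is handled by the $\Gb_m$-gerbe description $H^*(\mc{M}) \cong H^*(M)[z]$ used above.
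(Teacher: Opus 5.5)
Your proposal is correct and follows essentially the same route as the paper. You use the equivalence $[\Hom(\pi_1(Y),GL_n)/GL_n]\cong \Hom_{SSch}(\Delta_Y,BGL_n)$, then the algebraic universal (evaluation) map to $BGL_n$, which pulls back the pure weight $2i$ class $c_i$, and finally the Künneth decomposition against the weight-zero cohomology of the constant simplicial scheme $\Delta_Y$. Your explicit comparison of the realized evaluation map with the classifying map of the Riemann--Hilbert universal bundle usefully spells out a step that the paper only asserts.
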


Shende then generalized this theorem to the twisted character variety, by reducing it to the case of $GL_n$- and $PGL_n$-character varieties, as explained in \cite[]{MixedHodgePolynomials}.
In what follows, we shall explain how \zcref{thm: shende weights} can be directly generalized to our setting, without needing to use \cite[]{MixedHodgePolynomials}.

The main step appearing in Shende's proof of \zcref{thm: shende weights} is an equivalence
\begin{equation}
[\Hom(\pi_1(Y), G)/G] \cong \Hom_{SSch}(\Delta_Y, BG),
\end{equation}
where $Y$ is a path-connected topological space and $\Delta_Y$ is a simplicial set with geometric realization homotopy equivalent to $Y$, made into a simplicial scheme by replacing every simplex with $\Spec \Cb$.
Let us explain this equivalence in more detail. We first work with simplicial sets and groupoids in sets and denote by $BG$ both the groupoid and the simplicial set constructed from a group $G$.

First of all notice that if $H$ and $G$ are groups, there is an equivalence of groupoids
\begin{equation} \label{eq: eqv groupoids}
[\Hom(H, G)/G] \cong \Hom_{Gpd}(BH, BG).
\end{equation}
In the case $H = \pi_1(Y)$, we have that if $Y$ is path connected then the fundamental group is equivalent to the fundamental groupoid
\begin{equation}
B\pi_1(Y) \cong \Pi_1(Y).
\end{equation}

Let $\Delta_Y$ be a simplicial set such that $\pi_1(\Delta_Y) \cong \pi_1(Y)$. Then the fundamental groupoid can be realized simplicially as the free groupoid constructed from the path category of $\Delta_Y$. Denote by $P_* \Delta_Y$ the \textit{path category} of $\Delta_Y$, i.e. the category with objects the vertices of $\Delta_Y$ and morphisms generated by $1$-simplices in $\Delta_Y$ modulo the relation $\partial_0 \sigma \, \partial_2 \sigma = \partial_1 \sigma$ for all $2$-simplices $\sigma$. Then the fundamental groupoid of $Y$ is equivalent to the free groupoid $G P_* \Delta_Y$ on the path category $P_* \Delta_Y$, see \cite[\S III.1]{SimplicialHomotopyTheory}.\\
Substituting in \zcref{eq: eqv groupoids} we get an equivalence
\begin{equation} \label{eq: eqv groupoids 2}
[\Hom(\pi_1(Y), G)/G] \cong \Hom_{Gpd}(GP_*\Delta_Y, BG).
\end{equation}
Recall that the free groupoid functor is left adjoint to the inclusion $Gpd \hookrightarrow Cat$, so we get
\begin{equation}
\Hom_{Gpd}(GP_*\Delta_Y, BG) \cong \Hom_{Cat}(P_*\Delta_Y, BG).
\end{equation}
Finally, as explained in \cite[\S III.1]{SimplicialHomotopyTheory}, notice that since the simplicial set $BG$ is a $2$-coskeleton, a morphism $\Delta_Y \to BG$ is uniquely determined by its value on $0$- and $1$-simplices, up to compatibility with boundary of $2$-simplices, so we get
\begin{equation}\label{eq: eqv groupoids 3}
\Hom_{Cat}(P_*\Delta_Y, BG) \cong \Hom_{SSet}(\Delta_Y, BG).
\end{equation}
Summing up, we get an equivalence groupoids
\begin{equation} \label{eq: eqv groupoids 4}
[\Hom(\pi_1(Y), G)/G] \cong \Hom_{SSet}(\Delta_Y, BG),
\end{equation}
where the right-hand side is a groupoid under simplicial homotopy.

Going back to the algebraic case, the above construction carries through if one considers $\pi_1(Y)$ as a discrete group over $\Cb$ and views $\Delta_Y$ as a constant simplicial scheme by replacing every simplex with $\Spec \Cb$.
Explicitly, suppose that $\pi_1(Y)$ is finitely presented, so that $\Hom(\pi_1(Y), G)$ is an algebraic variety. Consider the presheaves of groupoids
\begin{align}
[\Hom(\pi_1(Y), G)/G]^{\text{pre}}\colon Sch/\Cb &\to Gpd\\
T &\mapsto [\Hom(\pi_1(Y), G)(T)/G(T)]
\end{align}
and
\begin{align} \label{eq: Hom_SSch as presheaf}
\Hom_{SSch}(\Delta_Y, BG)^{\text{pre}}\colon Sch/\Cb &\to Gpd\\
T &\mapsto \Hom_{SSch}(\Delta_{Y,T}, BG).
\end{align}
Then the equivalences \eqref{eq: eqv groupoids}, \eqref{eq: eqv groupoids 2}, \eqref{eq: eqv groupoids 3} become
\begin{multline}
[\Hom(\pi_1(Y), G)(T)/G(T)] \cong \Hom_{Gpd}(B\pi_1(Y)_T, BG)\\
\cong \Hom_{Gpd}(\Pi_1(Y)_T, BG) \cong \Hom_{SSch}(\Delta_{Y,T}, BG).
\end{multline}
After sheafification we get the following result.
\begin{lemma}\label{lemma: simplicial carachter variety}
Let $\Delta_Y$ be a path-connected simplicial set with finitely presented fundamental group $\pi_1(Y)$. Consider $\pi_1(Y)$ as a discrete group over $\Cb$ and $\Delta_Y$ as a constant simplicial scheme, replacing every simplex with $\Spec \Cb$. Then there is an equivalence of algebraic stacks
\begin{equation} \label{eq: eqv of groupoids, stacks}
[\Hom(\pi_1(Y), G)/G] \cong \Hom_{SSch}(\Delta_Y, BG),
\end{equation}
where $\Hom_{SSch}(\Delta_Y, BG)$ is the stack obtained as stackification of the presheaf of groupoids $\Hom_{SSch}(\Delta_Y, BG)^{\text{pre}}$ described in \eqref{eq: Hom_SSch as presheaf}.
\end{lemma}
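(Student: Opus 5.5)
The plan is to promote the objectwise equivalences of groupoids, established just above for each $\Cb$-scheme $T$, to an equivalence of presheaves of groupoids on $Sch/\Cb$, and then stackify both sides. The first step is naturality in $T$. For a morphism $T' \to T$, both presheaves act by pulling back matrices or simplicial morphisms along $G(T) \to G(T')$. Each map in the chain
\[
[\Hom(\pi_1(Y), G)(T)/G(T)] \cong \Hom_{Gpd}(B\pi_1(Y)_T, BG) \cong \Hom_{Gpd}(GP_*\Delta_{Y,T}, BG) \cong \Hom_{SSch}(\Delta_{Y,T}, BG)
\]
is defined by formulas in the matrices assigned to generators and edges, so it commutes with these pullbacks. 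To make this concrete I would fix a vertex $v_0$ of $\Delta_Y$ and a maximal tree $\tau$ in its $1$-skeleton. The functor from right to left restricts a simplicial map to the edge loops based at $v_0$ and sets to the identity the edges lying in $\tau$. The inverse functor chooses, for each edge $e$ from $v$ to $w$, the word in $\pi_1$ given by the tree path to $v$, followed by $e$, followed by the tree path back from $w$. Conjugations by $G(T)$ match the simplicial homotopies supported at $v_0$, and the homotopies moving other vertices are absorbed by the tree normalization. This yields a natural equivalence $\Phi^{\text{pre}}$ of presheaves of groupoids.

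The second step is to stackify. Stackification is a functor, and it sends equivalences of prestacks to equivalences of stacks. Taking the stackification of $\Phi^{\text{pre}}$ therefore gives an equivalence between the stackification of $[\Hom(\pi_1(Y),G)/G]^{\text{pre}}$ and, by definition, $\Hom_{SSch}(\Delta_Y, BG)$.

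The last step is to identify the stackification of $[\Hom(\pi_1(Y),G)/G]^{\text{pre}}$ with the quotient stack $[\Hom(\pi_1(Y),G)/G]$. Since $\pi_1(Y)$ is finitely presented, say with generators $g_1,\dots,g_r$ and relations $w_1,\dots,w_s$, the functor $T \mapsto \Hom(\pi_1(Y),G)(T)$ is representable. It is represented by the closed subscheme of $G^r$ cut out by the equations $w_k(g_1,\dots,g_r) = 1$, which is affine when $G$ is affine. The quotient stack of a scheme $R$ by a smooth group $G$ is by definition the stackification of the action-groupoid presheaf $T \mapsto [R(T)/G(T)]$, whose objects are trivial torsors with equivariant maps. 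This is the standard fact that every $G$-torsor is fppf-locally (indeed étale-locally, as $G$ is smooth) trivial. I expect the only real obstacle to be this last identification, together with the matching topology convention: one must stackify in the same topology (étale or fppf) on both sides, and check that $BG$ in the simplicial side denotes the presheaf $T \mapsto BG(T)$ rather than an already-stackified object. I would handle this by working throughout with presheaves and stackifying only once at the end.

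Algebraicity of the right-hand side then follows from the equivalence, since $[R/G]$ is an algebraic stack.
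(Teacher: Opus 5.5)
Your proposal is correct and follows the paper's argument. You check that the objectwise equivalences $[\Hom(\pi_1(Y),G)(T)/G(T)] \cong \Hom_{SSch}(\Delta_{Y,T},BG)$ are natural in $T$ (via $B\pi_1(Y)$, the free groupoid on the path category, and the $2$-coskeletality of $BG$), and then stackify. Your maximal-tree description of the equivalence, and the remark that the action-groupoid prestack stackifies to the quotient stack $[\Hom(\pi_1(Y),G)/G]$ because $G$-torsors are étale-locally trivial, only make explicit what the paper leaves implicit in ``after sheafification''.
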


We now want to apply \zcref{lemma: simplicial carachter variety} to the study of the Betti moduli space of the stacky curve $\XX$. The main step is the construction of a triangulation of $\XX$, namely a simplicial set $\Delta_{\XX}$ such that $\pi_1(\Delta_{\XX}) \cong \pi_1(\XX)$.

Considering $X$ as a complex manifold, the stack $\XX$ can be realized topologically as a pushout
\begin{equation} \label{eq: X as pushout topologically}
\begin{tikzcd}
	{\Sb^1} & {X \setminus \Db} \\
	{\left[\overline{\Db}/\mu_n\right]} & {\mc{X},}
	\arrow[from=1-1, to=1-2]
	\arrow[from=1-1, to=2-1]
	\arrow[from=1-2, to=2-2]
	\arrow[from=2-1, to=2-2]
	\arrow["\lrcorner"{anchor=center, pos=0.125, rotate=180}, draw=none, from=2-2, to=1-1]
\end{tikzcd}
\end{equation}
where $\overline{\Db }\subseteq \Cb$ is the closed unitary disk with interior $\Db$ and boundary $\Sb^1$. Of course, the topological spaces $\Sb^1$ and $X \setminus \left\{ p \right\}$ are easily realized as simplicial sets, so we need to construct a simplicial version of the stacky quotient $[\overline{\Db} / \mu_n]$ and then get $\mc{X}$ as pushout.

In order to construct a simplicial version of $[\overline{\Db} / \mu_n]$, we start from a triangulation of the disk $\overline{\Db}$ in $3n$ triangles, compatible with the $\mu_n$-action, as pictured below.
\begin{equation}
\begin{tikzpicture}[scale=0.75, >=stealth]
  \coordinate (w) at (0, 0);
  \coordinate (v_e_start) at (210:2.5);
  \coordinate (v_1) at (240:2.5);
  \coordinate (v_2) at (280:2.5);
  \coordinate (v_3) at (320:2.5);
  \coordinate (v_4) at (350:2.5);

  \draw[dashed, black] (350:2.5) arc (350:570:2.5);

  \foreach \pA/\pB in {v_e_start/v_1, v_1/v_2, v_2/v_3, v_3/v_4} {
    \fill[pattern=north west lines, pattern color=gray!30] 
      (w) -- (\pA) -- (\pB) -- cycle;
  }

  \draw[thick] (w) -- (v_e_start) node[midway, left, xshift=-1pt, yshift=1pt] {$\varepsilon_{3n}$};
  \draw[thick] (w) -- (v_1) node[midway, right] {$\varepsilon_1$};
  \draw[thick] (w) -- (v_2) node[midway, right] {$\varepsilon_2$};
  \draw[thick] (w) -- (v_3) node[midway, right, xshift=-2pt, yshift=2pt] {$\varepsilon_3$};
  \draw[thick] (w) -- (v_4);

  \draw[thick] (v_e_start) -- (v_1) node[midway, below left] {$l_{3n}$};
  \draw[thick] (v_1) -- (v_2) node[midway, below] {$l_1$};
  \draw[thick] (v_2) -- (v_3) node[midway, below right] {$l_2$};
  \draw[thick] (v_3) -- (v_4) node[midway, right] {$l_3$};

  \foreach \p in {w, v_e_start, v_1, v_2, v_3, v_4} {
    \filldraw[fill=black, draw=black] (\p) circle (1.5pt);
  }

  \node[above] at (w) {$w$};
  \node[below left] at (v_e_start) {$v_{3n}$};
  \node[below] at (v_1) {$v_1$};
  \node[below] at (v_2) {$v_2$};
  \node[below right] at (v_3) {$v_3$};
  \node[right] at (v_4) {$v_4$};

  \node at (225:1.9) {$\sigma_{3n}$};
  \node at (260:1.9) {$\sigma_1$};
  \node at (300:1.9) {$\sigma_2$};
  \node at (335:2.1) {$\sigma_3$};

\end{tikzpicture}
\end{equation}
This gives a simplicial complex $D$ with a $\mu_n$-action, where the generator acts by
\begin{equation}
w \mapsto w, \quad v_i \mapsto v_{i+3}, \quad \varepsilon_i \mapsto \varepsilon_{i+3}, \quad \sigma_{i} \mapsto \sigma_{i+3}.
\end{equation}
We can make $D$ functorially into a simplicial set by taking the nerve of the face poset of $D$, which topologically corresponds to taking the barycentric subdivision of the chosen triangulation. This yields a simplicial set $\Delta_D$ with a $\mu_n$-action. Then we can realize the quotient $[\overline{\Db} / \mu_n]$ as the homotopy quotient
\begin{equation}
\Delta_{[D/\mu_n]} \coloneqq \holim_{\longrightarrow \mu_n} \Delta_D.
\end{equation}

\begin{lemma} \label{fundamental group of homotopy quotient}
The simplicial set $\Delta_{[D / \mu_n]}$ has fundamental group $\mu_n$.
\end{lemma}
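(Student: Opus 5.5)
The plan is to identify $\Delta_{[D/\mu_n]}$ with the Borel construction and compute $\pi_1$ from the associated fibration. The homotopy colimit of a $\mu_n$-diagram of simplicial sets (i.e. a simplicial set with a $\mu_n$-action, viewed as a functor $B\mu_n \to \SSet$) is modelled by the Borel construction
\begin{equation}
\Delta_{[D/\mu_n]} \simeq E\mu_n \times_{\mu_n} \Delta_D = \operatorname{diag}\Big( [k] \mapsto \coprod_{\mu_n^{k}} \Delta_D \Big),
\end{equation}
by the Bousfield--Kan formula. I will use the projection $E\mu_n \times_{\mu_n} \Delta_D \to E\mu_n \times_{\mu_n} \ast = B\mu_n$. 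Since $\mu_n$ acts freely on $E\mu_n$, this map is a Kan fibration after fibrant replacement (or, more concretely, $E\mu_n \times \Delta_D \to E\mu_n \times_{\mu_n}\Delta_D$ is a principal $\mu_n$-covering). Its fiber is $\Delta_D$.

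Next I will note that $|\Delta_D|$ is the barycentric subdivision of the triangulated closed disk, so $|\Delta_D| \cong \overline{\Db}$ is contractible. It is also nonempty and connected; the vertex $w$ is even fixed by $\mu_n$, though I won't need that.

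The long exact sequence of homotopy groups for the fibration $\Delta_D \to \Delta_{[D/\mu_n]} \to B\mu_n$ then gives
\begin{equation}
\pi_1(\Delta_D) \to \pi_1(\Delta_{[D/\mu_n]}) \to \pi_1(B\mu_n) \to \pi_0(\Delta_D).
\end{equation}
The outer terms are trivial, so $\pi_1(\Delta_{[D/\mu_n]}) \cong \pi_1(B\mu_n) = \mu_n$. In fact all higher homotopy groups vanish as well, so $\Delta_{[D/\mu_n]} \simeq B\mu_n$, consistent with the analytic computation $\pi_1(\mc{U}_1) = \mu_n$ in \zcref{prop: fundamental group of stacky curve}.

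Equivalently, the covering $E\mu_n\times\Delta_D \to \Delta_{[D/\mu_n]}$ has a simply connected total space. This is because $E\mu_n$ and $\Delta_D$ are both contractible. The deck group of this universal covering is $\mu_n$, which gives the same conclusion.

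The main point requiring care is justifying that the homotopy colimit is computed by the Borel construction and that the projection to $B\mu_n$ is a fibration with fiber $\Delta_D$. I would handle this by the standard identification $\operatorname{hocolim}_{B\mu_n} F \cong \operatorname{diag}(B(\ast,\mu_n,F))$ and by observing that, since $\mu_n$ acts freely on $E\mu_n\times \Delta_D$ levelwise, the quotient map is a covering map of simplicial sets (see \cite[\S III.1]{SimplicialHomotopyTheory}). Covering maps are Kan fibrations, so the long exact sequence applies to the covering directly, without needing to replace anything fibrantly.
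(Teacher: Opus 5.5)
This is essentially the paper's proof: the paper obtains the homotopy fibre sequence $\Delta_D\to\Delta_{[D/\mu_n]}\to B\mu_n$ from \cite[\S IV Lemma 5.7]{SimplicialHomotopyTheory} rather than from an explicit Borel model, and then concludes with the long exact sequence and the contractibility of $\Delta_D$ exactly as you do. One small correction: ``covering maps are Kan fibrations'' justifies your universal-cover argument via $E\mu_n\times\Delta_D\to\Delta_{[D/\mu_n]}$, which is fine on its own, but it does not justify the long exact sequence for the projection to $B\mu_n$; that projection is not a Kan fibration, because its fibre $\Delta_D$ is not a Kan complex, so there you still need geometric realization or the cited lemma.
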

\begin{proof}
Using \cite[\S IV Lemma 5.7]{SimplicialHomotopyTheory} we find that the following square is homotopy cartesian
\begin{equation}
\begin{tikzcd}
	{\Delta_D} & {\Delta_{[D / \mu_n]}} \\
	{*} & {B\mu_n.}
	\arrow[from=1-1, to=1-2]
	\arrow[from=1-1, to=2-1]
	\arrow["p", from=1-2, to=2-2]
	\arrow[from=2-1, to=2-2]
\end{tikzcd}
\end{equation}
This means that $\Delta_D$ and $\Delta_{[D / \mu_n]}$ can be replaced up to weak equivalence by Kan complexes making $p$ into a Kan fibration $p'$ with fiber weakly equivalent to $\Delta_D$. To conclude, use the long exact homotopy sequence of the Kan fibration $p'$, see \cite[\S I Lemma 7.3]{SimplicialHomotopyTheory}, and the fact that $\Delta_D$ is contractible.
\end{proof}

In order to proceed to realize the simplicial version of the diagram \eqref{eq: X as pushout topologically}, we need to realize $\Sb^1$ as boundary of $\Delta_{[D/\mu_n]}$. Start again from the simplicial complex $D$, and consider its boundary $\partial D$.
\begin{equation}
\begin{tikzpicture}[scale=0.75, >=stealth]
  \coordinate (v_e_start) at (210:2.5);
  \coordinate (v_1) at (240:2.5);
  \coordinate (v_2) at (280:2.5);
  \coordinate (v_3) at (320:2.5);
  \coordinate (v_4) at (350:2.5);

  \draw[dashed, black] (350:2.5) arc (350:570:2.5);

  \draw[thick] (v_e_start) -- (v_1) node[midway, below left] {$l_{3n}$};
  \draw[thick] (v_1) -- (v_2) node[midway, below] {$l_1$};
  \draw[thick] (v_2) -- (v_3) node[midway, below right] {$l_2$};
  \draw[thick] (v_3) -- (v_4) node[midway, right] {$l_3$};

  \foreach \p in {v_e_start, v_1, v_2, v_3, v_4} {
    \filldraw[fill=black, draw=black] (\p) circle (1.5pt);
  }

  \node[below left] at (v_e_start) {$v_{3n}$};
  \node[below] at (v_1) {$v_1$};
  \node[below] at (v_2) {$v_2$};
  \node[below right] at (v_3) {$v_3$};
  \node[right] at (v_4) {$v_4$};
\end{tikzpicture}
\end{equation}
The action of $\mu_n$ on $\partial D$ is free and the quotient corresponds to the following triangulation of $\Sb^1$.
\begin{equation}\label{eq: dD/mue simplicial complex}
\begin{tikzpicture}[
    scale=1,
    dot/.style={circle, fill=black, inner sep=1.5pt}
  ]
    \coordinate (v1) at (0, 0);
    \coordinate (v2) at (1.5, 0);
    \coordinate (v3) at (0.75, 1.3);

    \draw (v1) -- (v2) node[midway, below=2pt] {$[l_1]$};
    \draw (v2) -- (v3) node[midway, right=2pt] {$[l_2]$};
    \draw (v3) -- (v1) node[midway, left=2pt] {$[l_3]$};

    \node[dot] at (v1) {};
    \node[dot] at (v2) {};
    \node[dot] at (v3) {};

    \node[below left=0pt] at (v1) {$[v_1]$};
    \node[below right=0pt] at (v2) {$[v_2]$};
    \node[above=2pt] at (v3) {$[v_3]$};

\end{tikzpicture}
\end{equation}
Once again, we pass functorially to simplicial sets by taking the nerve of the face poset. Then we get the boundary subsimplicial set $\Delta_{\partial D} \subseteq \Delta_D$, which has a free $\mu_n$-action such that the quotient $\Delta_{\partial D}/ \mu_n$ corresponds to the barycentric subdivision of \eqref{eq: dD/mue simplicial complex}.
\begin{rmk}
Since $\mu_n$ acts freely on $\Delta_{\partial D}$, the natural map
\begin{equation}
\Delta_{[\partial D/\mu_n]} \to \Delta_{\partial D}/\mu_n \cong \Delta_{\Sb^1}
\end{equation}
from the homotopy quotient to the strict quotient is a weak homotopy equivalence.
\end{rmk}
Indeed, the previous remark follows from the following more general statement.
\begin{lemma}
Let a group $G$ act on a simplicial set $S$. Denote by $\displaystyle [S/G]\coloneqq \holim_{\longrightarrow G} S$ the homotopy quotient and by $S/G$ the strict quotient. If the action of $G$ on $S$ is free, then the natural map
\begin{equation}
[S/G] \to S/G
\end{equation}
is a weak homotopy equivalence.
\end{lemma}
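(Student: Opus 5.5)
We model the homotopy quotient concretely by the Borel construction, $[S/G] \simeq EG \times_G S$, where $EG$ is the standard contractible simplicial set with free $G$-action (the nerve of the translation groupoid of $G$) and $G$ acts diagonally on $EG \times S$. This is the standard model for the homotopy colimit over the one-object category $G$ (cf.\ the construction used in \cite[\S IV]{SimplicialHomotopyTheory}). The natural map $[S/G] \to S/G$ is then induced by the $G$-equivariant projection $EG \times S \to S$, after passing to strict quotients:
\begin{equation}
q \colon (EG \times S)/G \longrightarrow S/G .
\end{equation}

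The key step is to show that $q$ is a Kan fibration, or at least a fibration after geometric realization, with contractible fibers. Since $G$ acts freely on $S$, each simplex $\tau$ of $S/G$ lifts to a simplex $\tilde\tau$ of $S$, and the lift is unique up to a unique element of $G$. We then compute the fiber of $q$ over $\tau$ by pulling back along the classifying map $\Delta^k \to S/G$ of $\tau$. Because the projection $S \to S/G$ is a principal $G$-bundle (a covering) when the action is free, this pullback is trivial: $\Delta^k \times G \to \Delta^k$. Consequently, the pullback of $q$ along $\Delta^k$ is
\begin{equation}
(EG \times \Delta^k \times G)/G \cong EG \times \Delta^k \longrightarrow \Delta^k .
\end{equation}
This is a trivial bundle with fiber $EG$, which is contractible. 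It follows that $q$ is a fiber bundle (in the sense of \cite[\S I]{SimplicialHomotopyTheory}) with contractible fiber $EG$, and a minimal-fibration or realization argument then gives that $q$ is a weak equivalence.

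As an alternative route that avoids fibration language, we compare long exact sequences. Consider the two principal $G$-bundles $EG \times S \to [S/G]$ and $S \to S/G$. The first is principal because $G$ acts freely on $EG$, the second because the action on $S$ is free by hypothesis, and both are Kan fibrations with fiber $G$ after realization. The equivariant projection $EG \times S \to S$ is a weak equivalence since $EG$ is contractible, and it covers the map $q$ while being the identity on the fibers $G$. The five lemma applied to the long exact homotopy sequences, with care at $\pi_0$ and $\pi_1$ via the usual action arguments, then shows that $q$ induces isomorphisms on all homotopy groups at every basepoint.

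The main obstacle is justifying that $S \to S/G$ is a covering, i.e.\ a Kan fibration with discrete fiber $G$, when the action is merely free on simplices. We would verify this directly: freeness means $g\sigma = \sigma$ forces $g = e$ for every simplex $\sigma$. Since a degenerate simplex is fixed exactly when its underlying nondegenerate simplex is, it suffices to check freeness on nondegenerate simplices. Freeness implies that the map is a trivial $G$-bundle over each simplex of $S/G$, which gives the required local triviality. Unique path lifting, and hence the Kan lifting property for horns, follows simplex by simplex.
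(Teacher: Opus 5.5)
Your proof is correct, but it takes a different route from the paper.

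The paper argues model-categorically. It identifies the model structure on $\SSet_G$ with the projective model structure on $[BG,\SSet]$. It then uses \cite[\S V, Cor.~2.10]{SimplicialHomotopyTheory} to say that free $G$-simplicial sets are exactly the projectively cofibrant diagrams. Finally it invokes the general fact \cite[Tag 03CJ]{kerodon} that $\holim_{\longrightarrow I} F \to \lim_{\longrightarrow I} F$ is a weak equivalence for projectively cofibrant $F$.

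You instead work with the explicit Borel model $EG\times_G S$ and verify the claim by hand. Freeness in every degree makes the pullback of $S\to S/G$ along any simplex $\tau\colon\Delta^k\to S/G$ equivariantly isomorphic to $\Delta^k\times G$. Since quotients by $G$ commute with base change in $\SSet$, the base change of $q$ along $\tau$ is $EG\times\Delta^k\to\Delta^k$.

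You can finish more directly than via minimal fibrations or the five-lemma comparison of your second route. Lifting problems against $\Lambda^m_i\subset\Delta^m$ and $\partial\Delta^m\subset\Delta^m$ factor through the base change to a simplex of the base. There $EG\times\Delta^k\to\Delta^k$ is a trivial fibration, because $EG\to *$ is one, so $q$ is a trivial Kan fibration. This same factoring argument, rather than ``unique path lifting'', is also what makes $S\to S/G$ a Kan fibration.

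The paper's argument is shorter and applies verbatim to any projectively cofibrant diagram over any indexing category. However, it rests on two black-box citations and on identifying the two model structures. Yours is elementary and self-contained, and gives the slightly stronger conclusion that $q$ is a trivial fibration. It also fits naturally with the Borel fibration $EG\times_G X\to BG$ that the paper uses to compute $\pi_1(\Delta_{[D/\mu_n]})$.
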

\begin{proof}
Let $\SSet_G$ be the category of simplicial sets with a $G$-action. Thinking of $G$ as a constant simplicial group, we can use \cite[\S V.2]{SimplicialHomotopyTheory} to study the closed model structure on $\SSet_G$ with weak equivalences and fibrations given by morphisms which are weak equivalences and fibrations in $\SSet$.
Then \cite[\S V Cor 2.10]{SimplicialHomotopyTheory} states that $S$ is cofibrant in $\SSet_G$ if and only the action of $G$ is free.

On the other hand, $\SSet_G$ can be thought of as the functor category $[BG, \SSet]$. Then the one specified above is exactly the projective model structure on the functor category $[BG, \SSet]$. The cofibrant objects with respect to this model structure are usually called projectively cofibrant.
With this notation, \cite[\S V, Cor 2.10]{SimplicialHomotopyTheory} says that $S$ is projectively cofibrant if and only if the action of $G$ on $S$ is free. Finally, we conclude the lemma using \cite[\href{https://kerodon.net/tag/03CJ}{Tag 03CJ}]{kerodon}, which states that if a diagram in a functor category $F \in [I, \SSet]$ is projectively cofibrant, then the natural map
\begin{equation}
\holim_{\longrightarrow I} F \to \lim_{\longrightarrow I} F
\end{equation}
is a weak equivalence.
\end{proof}

To realize the picture described in \eqref{eq: X as pushout topologically}, the last piece that we need is a simplicial model of $X \setminus \Db$. As before, to do this we take a triangulation of $X \setminus \Db$ such that the boundary $\Sb^1$ is triangulated as in \eqref{eq: dD/mue simplicial complex}. Call $Y$ the resulting simplicial complex and $\Delta_Y$ the corresponding simplicial set. By construction we have
\begin{equation}
\Delta_{\partial D/ \mu_n} = \Delta_{\partial Y} \subseteq \Delta_Y.
\end{equation}
Then the simplicial set realizing the stacky curve $\mc{X}$ is obtained as the pushout
\begin{equation} \label{eq: X as simplicial set}
\begin{tikzcd}
	{\Delta_{[\partial D / \mu_n]}} & {\Delta_{\partial D / \mu_n} = \Delta_{\partial Y}} & {\Delta_Y} \\
	{\Delta_{[D / \mu_n]}} && {\Delta_{\mc{X}}}
	\arrow["\simeq", from=1-1, to=1-2]
	\arrow["i", hook, from=1-1, to=2-1]
	\arrow["j", hook, from=1-2, to=1-3]
	\arrow[from=1-3, to=2-3]
	\arrow[from=2-1, to=2-3]
	\arrow["\lrcorner"{anchor=center, pos=0.125, rotate=180}, draw=none, from=2-3, to=1-2]
\end{tikzcd}
\end{equation}
Finally, we check that $\Delta_{\mc{X}}$ has the expected fundamental group.
\begin{prop}
The pushout diagram \eqref{eq: X as simplicial set} induces an isomorphism
\begin{equation}
\pi_1(\Delta_{\mc{X}}) \cong \left\langle a_i, \, b_i, \, c \mymid \prod_{i = 1}^g [a_i, b_i] = c, \; c^n = 1 \right\rangle .
\end{equation}
\end{prop}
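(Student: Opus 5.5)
The plan is to apply a simplicial Seifert--Van Kampen theorem to the pushout \eqref{eq: X as simplicial set}, mirroring the proof of \zcref{prop: fundamental group of stacky curve}.

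First, we replace the diagram by a homotopy pushout. The map $i\colon \Delta_{[\partial D/\mu_n]} \hookrightarrow \Delta_{[D/\mu_n]}$ is a monomorphism, so it is a cofibration. The square is then a homotopy pushout, since pushouts along cofibrations of simplicial sets are homotopy pushouts (left properness of $\SSet$). Composing with the horizontal weak equivalence $\Delta_{[\partial D/\mu_n]} \simeq \Delta_{\partial D/\mu_n}$, we may treat $\Delta_{\XX}$ as the homotopy pushout of $\Delta_{[D/\mu_n]} \leftarrow \Delta_{\Sb^1} \rightarrow \Delta_Y$. All three pieces are connected. Van Kampen for simplicial sets then applies: either pass to geometric realizations, which preserve pushouts and send monomorphisms to CW inclusions, or use the description of $\pi_1$ via the free groupoid on the path category. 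It gives
\[
\pi_1(\Delta_{\XX}) \cong \pi_1(\Delta_{[D/\mu_n]}) *_{\pi_1(\Delta_{\Sb^1})} \pi_1(\Delta_Y),
\]
with basepoints chosen at a vertex of the boundary circle.

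Next, we identify each term. By \zcref{fundamental group of homotopy quotient}, $\pi_1(\Delta_{[D/\mu_n]}) \cong \mu_n = \langle c \mid c^n = 1\rangle$. Since $\Delta_Y$ realizes a genus $g$ surface with one disk removed, $\pi_1(\Delta_Y)$ is free on $a_1,b_1,\dots,a_g,b_g$, and the boundary loop $d$ generating $\pi_1(\Delta_{\Sb^1}) \cong \Zb$ maps to $\prod_{i=1}^g [a_i,b_i]$. This is the classical computation for the triangulated surface, and we may choose the triangulation $Y$ (standard polygon model) to make it evident.

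The main obstacle is the remaining map $\pi_1(\Delta_{[\partial D/\mu_n]}) \to \pi_1(\Delta_{[D/\mu_n]})$: we must show it sends the generator $d$ to a generator of $\mu_n$, which we call $c$. For this, we compare the long exact sequences of the two homotopy fibrations over $B\mu_n$ from \zcref{fundamental group of homotopy quotient}. These are $\Delta_{\partial D} \to \Delta_{[\partial D/\mu_n]} \to B\mu_n$ and $\Delta_D \to \Delta_{[D/\mu_n]} \to B\mu_n$, and they are compatible since $i$ is $\mu_n$-equivariant. For the boundary, the sequence is $0 \to \pi_1(\Delta_{\partial D}) \cong \Zb \to \pi_1(\Delta_{[\partial D/\mu_n]}) \to \mu_n \to 0$. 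The group in the middle is $\Zb$, generated by the loop from $[v_1]$ around the quotient triangle (the path $l_1 l_2 l_3$ lifted). Its image in $\mu_n$ is the generator, because the lift runs from $v_1$ to $v_4$, i.e.\ it is translated by the generator of $\mu_n$. For the disk, the sequence gives $\pi_1(\Delta_{[D/\mu_n]}) \cong \mu_n$, and the map to $\mu_n$ is compatible with the boundary one. Hence $d \mapsto c$.

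Finally, the amalgamated product yields $\langle a_i, b_i, c \mid \prod [a_i,b_i] = c,\ c^n = 1\rangle$. This is the presentation in \eqref{eq: fundamental group of stacky curve}, so it agrees with \zcref{prop: fundamental group of stacky curve}.
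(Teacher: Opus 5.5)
Your proposal is correct and follows essentially the same route as the paper. Both apply Seifert--Van Kampen to the simplicial pushout, use the standard fundamental groups of the three pieces, take $j_*d = \prod_{i=1}^g [a_i,b_i]$ by construction, and get $i_*d = c$ by factoring through the projection to $B\mu_n$. Your comparison of the two homotopy fibration sequences over $B\mu_n$ (notably the surjectivity $\pi_1(\Delta_{[\partial D/\mu_n]}) \to \mu_n$ coming from $\pi_0(\Delta_{\partial D}) = *$) fills in the step the paper states in one line, and your left-properness remark justifies applying Van Kampen to a strict pushout.
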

\begin{proof}
Using Seifert-Van Kampen theorem for simplicial sets, see \cite[\S III, Theorem 1.4]{SimplicialHomotopyTheory}, we get the following pushout diagram of groups
\begin{equation}
\begin{tikzcd}
	{\pi_1(\Delta_{[\partial D / \mu_n]}) \cong \pi_1(\Sb^1)} & {} & {\pi_1(\Delta_Y) \cong \pi_1(X \setminus \Db)} \\
	{\pi_1(\Delta_{[D / \mu_n]}) = \mu_n} && {\pi_1(\Delta_{\mc{X}}).}
	\arrow["{j_*}", from=1-1, to=1-3]
	\arrow["{i_*}", from=1-1, to=2-1]
	\arrow[from=1-3, to=2-3]
	\arrow[from=2-1, to=2-3]
	\arrow["\lrcorner"{anchor=center, pos=0.125, rotate=180}, draw=none, from=2-3, to=1-2]
\end{tikzcd}
\end{equation}
Recall that
\begin{gather}
\pi_1(\Sb^1)=\left< d \right>, \quad \pi_1(\Delta_{[D / \mu_n]}) = \left< c \mid c^n=1 \right>,\\
\pi_1(X \setminus \Db) = \left< a_i, b_i \mid i=1,\dots,g \right>.
\end{gather}
We have that
\begin{equation}
j_* d = \prod_{i=1}^{g} [a_i, b_i]
\end{equation}
by construction of $j$. Finally notice that, under the isomorphism $\pi_1(\Delta_{[D/ \mu_n]}) \cong \pi_1(B \mu_n)$ shown in \zcref{fundamental group of homotopy quotient}, the map $i_*$ corresponds to the pushforward under the projection
\begin{equation}
\Delta_{[\partial D / \mu_n]} \to B \mu_n.
\end{equation}
Therefore, $i_* d = c$, concluding the proof.
\end{proof}

In conclusion, we constructed a simplicial set $\Delta_{\mc{X}}$ realizing a triangulation of the stacky curve $\mc{X}$, with the main property that
\begin{equation}
\pi_1(\mc{X}) \cong \pi_{1}(\Delta_{\mc{X}}).
\end{equation}
Substituting in \zcref{lemma: simplicial carachter variety}, we get the following result.
\begin{prop}\label{prop: simplicial carachter variety}
As in \zcref{lemma: simplicial carachter variety}, denote by $\Delta_{\XX}$ the constant simplicial scheme constructed from the simplicial set $\Delta_{\XX}$ and by $\Hom_{SSch}(\Delta_{\XX}, BG)$ the stack obtained by stackifying the presheaf of groupoids $\Hom_{SSch}(\Delta_{\XX}, BG)^{\text{pre}}$ described in \eqref{eq: Hom_SSch as presheaf}.
Then there is an isomorphism of stacks
\begin{equation}
\left[ \text{Hom}(\pi_1(\mc{X}, v_0), G) / G \right] \cong \Hom_{SSch}(\Delta_{\XX}, BG)
\end{equation}
between the stack of local systems on $\mc{X}$ and the stack of simplicial morphisms of $\Delta_{\mc{X}}$ into the classifying stack $BG$.
\end{prop}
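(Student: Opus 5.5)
The plan is to combine \zcref{lemma: simplicial carachter variety} with the computation of $\pi_1(\Delta_{\XX})$ just completed, and with \zcref{prop: fundamental group of stacky curve}. First, I check the hypotheses of \zcref{lemma: simplicial carachter variety} for $\Delta_{\XX}$. It is path-connected because it is a pushout of the connected simplicial sets $\Delta_{[D/\mu_n]}$ and $\Delta_Y$ along the connected $\Delta_{[\partial D/\mu_n]}$. Its fundamental group is finitely presented, since the previous proposition gives the explicit presentation with generators $a_i, b_i, c$ and relations $\prod_i[a_i,b_i]=c$, $c^n=1$. Applying the lemma with base vertex $v_0$ then gives an equivalence of stacks
\begin{equation}
[\Hom(\pi_1(\Delta_{\XX}, v_0), G)/G] \cong \Hom_{SSch}(\Delta_{\XX}, BG).
\end{equation}

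Second, I identify the left-hand side with the stack of local systems on $\XX$. Both $\pi_1(\XX, x)$, computed via Noohi's Seifert--Van Kampen in \zcref{prop: fundamental group of stacky curve}, and $\pi_1(\Delta_{\XX}, v_0)$ have the same presentation. I choose the isomorphism $\pi_1(\XX)\cong \pi_1(\Delta_{\XX})$ that sends generators to generators. This yields an isomorphism of affine varieties
\begin{equation}
\Hom(\pi_1(\XX), G)\cong \Hom(\pi_1(\Delta_{\XX}), G),
\end{equation}
both being the closed subvariety of $G^{2g+1}$ described in \zcref{Betti moduli space}. The isomorphism is $G$-equivariant for conjugation, so it descends to the quotient stacks. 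Changing the basepoint from $x$ to $v_0$ only changes $\pi_1$ by an inner isomorphism, and this does not affect the quotient stack $[\Hom(-,G)/G]$.

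The main obstacle is not formal. It is making sure that the abstract isomorphism of presentations is compatible with the two Van Kampen decompositions, so that the resulting map really identifies local systems on $\XX$ with simplicial maps $\Delta_{\XX}\to BG$. I would handle this by matching the pieces directly.
\begin{itemize}
\item The disk piece $[\Db/\mu_n]$ corresponds to $\Delta_{[D/\mu_n]}$. Both have fundamental group $\mu_n$ by \zcref{fundamental group of homotopy quotient}, with $c$ the deck generator.
\item The complement $X\setminus\Db$ corresponds to $\Delta_Y$ via the triangulation.
\item The circle $\Sb^1$ corresponds to $\Delta_{[\partial D/\mu_n]}\simeq\Delta_{\partial Y}$, with $d$ mapping to $c$ and to $\prod_i[a_i,b_i]$ in both settings.
\end{itemize}
Since $\pi_1$ of each pushout is the pushout of groups, these compatible identifications induce the isomorphism. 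For the statement as an isomorphism of stacks, it then suffices to note that it holds at the level of presheaves of groupoids on $Sch/\Cb$ before stackification, where everything is a constant construction in $T$.
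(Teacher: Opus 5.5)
Your proposal is correct and follows the paper's route. The paper simply substitutes the isomorphism $\pi_1(\XX)\cong\pi_1(\Delta_{\XX})$ into the simplicial character variety lemma, both groups having the presentation $\langle a_i,b_i,c \mid \prod_i[a_i,b_i]=c,\ c^n=1\rangle$, exactly as in your first two steps; your paragraph matching the two Van Kampen decompositions is not needed for the abstract isomorphism of stacks claimed here, though it is the compatibility implicitly used later when the universal object on $\inHom_{SSch}(\Delta_{\XX}, BGL_n)$ is identified with $\mc{E}^B_{\text{univ}}$.
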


Using this isomorphism, we can reformulate Shende's proof of \zcref{thm: shende weights} to compute the weight of the tautological classes on the Betti moduli space of $\XX.$
\begin{thm}
Consider the Riemann-Hilbert isomorphism
\begin{equation}
\RH^* \colon H^*(\mc{M}_{n, DR}^0(\XX)) \to H^*(\mc{M}_{n, B}(\XX)).
\end{equation}
Let $e_{i,j}$ be the tautological classes in $H^*(\mc{M}_{n, DR}^0(\XX))$. Then for all $j$ the pullback $\RH^* e_{i,j}$ has weight $2i$.
\end{thm}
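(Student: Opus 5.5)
The plan is to follow Shende's argument, using the simplicial model $\Delta_{\XX}$ from \zcref{prop: simplicial carachter variety} in place of a triangulation of a manifold.

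\textbf{Step 1: transport the universal bundle to the Betti side.} On $\XX \times \mc{M}_{n, DR}^0(\XX)$ the universal flat bundle $(\mc{E}_{\text{univ}}^{DR}, \nabla_{\text{univ}})$ has an analytic flat structure. Under $\RH$ it corresponds to the universal local system $\mathbb{L}$ on $\XX^{\text{an}} \times \mc{M}_{n,B}(\XX)$. The underlying topological vector bundle of $\mathbb{L}$ is isomorphic to that of $\mc{E}_{\text{univ}}^{DR}$. Hence
\[
(\id \times \RH)^* c_i(\mathbb{L}) = c_i(\mc{E}^{DR}_{\text{univ}})
\]
in $H^{2i}(\XX \times \mc{M}_{n,DR}^0(\XX);\Qb)$. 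Taking Künneth components against a fixed basis of $H^*(\XX;\Qb) \cong H^*(X;\Qb)$ reduces the theorem to a claim purely on the Betti side: the Künneth components of $c_i(\mathbb{L})$ in $H^*(\mc{M}_{n,B}(\XX))$ have weight $2i$.

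\textbf{Step 2: replace $\XX$ by $\Delta_{\XX}$ and pass to the simplicial scheme.} By \zcref{prop: simplicial carachter variety}, $\mc{M}_{n,B}(\XX) \cong \shHom_{SSch}(\Delta_{\XX}, BGL_n)$. Evaluation gives a universal morphism of simplicial stacks
\[
\mathrm{ev}\colon \Delta_{\XX} \times \shHom_{SSch}(\Delta_{\XX}, BGL_n) \to BGL_n .
\]
The geometric realization of $\Delta_{\XX}$ has the homotopy type of $\XX^{\text{an}}$ (at least through $\pi_1$, and on rational cohomology, which is what is needed for Künneth). Under this identification, $\mathrm{ev}^*$ of the universal Chern class $c_i \in H^{2i}(BGL_n)$ is $c_i(\mathbb{L})$, since both classify the same local system. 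The cohomology of the constant simplicial scheme $\Delta_{\XX} \times M$ is computed by a spectral sequence with $E_1^{p,q} = \prod_{\Delta_{\XX,p}} H^q(M)$. By Deligne's theory this spectral sequence carries mixed Hodge structures, and the degree-$p$ simplicial direction contributes weight $0$, because each simplex is $\Spec \Cb$.

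\textbf{Step 3: conclude weights.} The class $c_i \in H^{2i}(BGL_n)$ is pure of weight $2i$ (Deligne), and $\mathrm{ev}$ is algebraic, so $\mathrm{ev}^* c_i$ has weight $2i$ in the mixed Hodge structure on $H^{2i}(\Delta_{\XX} \times \mc{M}_{n,B})$. Decompose via Künneth: $H^*(\Delta_{\XX};\Qb)$ is concentrated in weight $0$, so the component $\RH^* e_{i,j}$ paired with a class of $H^{2i-j}(\Delta_{\XX})$ must itself have weight $2i$, for every $j$. Restricting to the component $n\cdot w(-d)$ is harmless, since it is open and closed.

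\textbf{Main obstacle.} I expect the main obstacle to be Step 2. One has to make the evaluation map and the mixed Hodge structure on the cohomology of the stackified mapping object rigorous, and check that the simplicial Künneth decomposition is compatible with the topological one used to define $e_{i,j}$. This compatibility needs the weak equivalence between $|\Delta_{\XX}|$ and $\XX^{\text{an}}$ on rational cohomology, not merely the isomorphism on $\pi_1$. Rationally both have the cohomology of $X$, because the finite stabilizer $\mu_n$ does not contribute. I would verify this first with a Mayer--Vietoris argument on the pushout \eqref{eq: X as simplicial set}, using the fact that $\Delta_{[D/\mu_n]}$ is rationally acyclic.
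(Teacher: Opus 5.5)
Your proposal is correct and follows essentially the same route as the paper: transport the universal flat bundle along $\RH$, identify $\mc{M}_{n,B}(\XX)$ with $\shHom_{SSch}(\Delta_{\XX}, BGL_n)$, and pull back the pure weight-$2i$ class $c_i \in H^{2i}(BGL_n)$ along the algebraic universal (evaluation) morphism. You then conclude exactly as the paper does, using that $\Delta_{\XX}$, built from copies of $\Spec \Cb$, contributes only weight $0$ in the Künneth decomposition. The compatibility you flag as the main obstacle is left implicit in the paper, namely that $|\Delta_{\XX}|$ and $\XX^{\text{an}}$ agree on rational cohomology, which you propose to check by Mayer--Vietoris on the pushout using the rational acyclicity of $\Delta_{[D/\mu_n]}$; this check is a useful refinement rather than a different argument.
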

\begin{proof}
Fix $G = GL_n$, so that
\begin{equation}
\mc{M}_{n, B}(\XX) \coloneqq \left[ \text{Hom}(\pi_1(\XX, v_0), GL_n) / GL_n \right]
\end{equation}
is the Betti moduli stack on $\XX$. Let
\begin{equation}
\RH \colon \mc{M}_{n, B}(\XX) \to \mc{M}_{n, DR}^0(\XX)
\end{equation}
be the analytic isomorphism induced by the Riemann-Hilbert correspondence.
As before, let $e_{i,j}$ be the K\"unneth components of the Chern classes of the universal bundle on $\mc{M}_{n,DR}^0(\XX)\times \XX$. We want to compute the weight of the classes
\begin{equation}
\varepsilon_{i,j} \coloneqq \RH^*(e_{i,j})
\end{equation}
in the cohomology ring $H^*(\mc{M}_{n,B}(\XX), \Qb)$. The universal flat bundle $\mc{E}_{\text{univ}}^{DR}$ on $\mc{M}_{n,DR}^0(\XX) \times \XX$ pulls back under $\RH$ to the flat bundle $\mc{E}_{\text{univ}}^{B}$ on $\mc{M}_{n,B}(\XX) \times \XX$, whose horizontal sections form the universal local system $L_{\text{univ}}$. We remark that $\RH$ is only defined analytically, as is the holomorphic vector bundle $\mc{E}_{\text{univ}}^B$. The classes $\varepsilon_{i,j}$ are exactly the Künneth components of $c_i(\mc{E}_{\text{univ}}^B)$.

As shown in \cite[\S 4]{Olsson-sheaves-on-artin-stacks}, any algebraic stack can be viewed as a simplicial algebraic space by taking iterated fibered products of an atlas and this construction induces an isomorphism in cohomology, preserving mixed Hodge structures. In particular, the weights of the classes $\varepsilon_{i,j}$ can be computed after replacing $\mc{M}_{n, B}(\XX)$ by a simplicial scheme.
Using \zcref{prop: simplicial carachter variety}, we have that
\begin{equation}
\mc{M}_{n, B}(\XX) \cong \Hom_{SSch}(\Delta_\XX, BGL_n),
\end{equation}
namely the Betti moduli stack is isomorphic to the stack of simplicial maps of $\Delta_{\XX}$ into $BGL_n$.
Therefore a simplicial scheme realizing $\mc{M}_{n,B}(\XX)$ is the internal $\inHom_{SSch}(\Delta_\XX, BGL_n)$. We have a universal morphism
\begin{equation}
\varphi_{\text{univ}}\colon \inHom_{SSch}(\Delta_\XX, BGL_n) \times \Delta_\XX \to BGL_n,
\end{equation}
namely the object in
\begin{multline}
\Hom_{SSch}(\inHom_{SSch}(\Delta_\XX, BGL_n) \times \Delta_\XX, BGL_n) \\
\cong \Hom_{SSch}(\inHom_{SSch}(\Delta_\XX, BGL_n), \inHom_{SSch}(\Delta_\XX, BGL_n))
\end{multline}
corresponding to the identity. This is an algebraic morphism and the corresponding vector bundle $\mc{E}_{\text{univ}}$ is an algebraic vector bundle on $\inHom_{SSch}(\Delta_{\XX}, BGL_n) \times \Delta_{\XX}$, which corresponds to $\mc{E}_{\text{univ}}^B$ after replacing $\XX$ by $\Delta_\XX$ and $\mc{M}_{n,B}(\XX)$ by $\inHom_{SSch}(\Delta_\XX, BGL_n)$.

Since the morphism $\varphi_{\text{univ}}$ is algebraic, the pullback
\begin{equation}
\varphi_{\text{univ}}^* \colon H^*(BGL_n, \Qb) \to H^*(\inHom_{SSch}(\Delta_\XX, BGL_n), \Qb) \otimes H^*(\Delta_\XX, \Qb)
\end{equation}
preserves the mixed Hodge structures. Recall that
\begin{equation}
H^*(BGL_n, \Qb) = \Qb[c_1, \dots, c_n]
\end{equation}
and that the Chern classes are exactly the pullbacks of the $c_i$'s, namely
\begin{equation}
c_i(\mc{E}_{\text{univ}}) = \varphi_{\text{univ}}^*(c_i).
\end{equation}
Since $c_i$ has pure weight $2i$, we get that
\begin{equation}
c_i(\mc{E}_{\text{univ}}) = \sum_{j} \varepsilon_{i,j} \otimes x_{i,j}
\end{equation}
has pure weight $2i$. Since the simplicial scheme $\Delta_\XX$ is formed by several copies of $\Spec \Cb$ in each degree, its mixed Hodge structure only has weight zero. Therefore the $x_{i,j}$'s have weight zero and $\varepsilon_{i,j}$ has pure weight $2i$ for all $j$.
\end{proof}

Restricting to the $n \cdot w(-d)$-stratum we get the following corollary.

\begin{cor}
Under the Riemann-Hilbert isomorphism
\begin{equation}
\RH^* \colon H^*(M_{n \cdot w(-d), DR}^0(\XX)) \to H^*(M_{n \cdot w(-d), B}(\XX))
\end{equation}
the tautological classes $e_{i,j}$ in $H^*(M_{n \cdot w(-d), DR}^0(\XX))$, which generate the cohomology ring and have weight $j$, are mapped to cohomology classes of weight $2i$.
\end{cor}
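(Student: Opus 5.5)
The plan is to deduce the corollary from the preceding theorem by restricting everything to the connected component indexed by $n \cdot w(-d)$, and then passing from stacks to coarse moduli spaces.

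\textbf{Step 1: restriction to the component.} The decomposition \eqref{eq: decomposition of MDR} is a disjoint union of open and closed substacks. By \zcref{prop: monodromy representation} and \zcref{cor: betti moduli space decomposition}, the Riemann--Hilbert isomorphism $\RH$ respects the indexing by multiplicity vectors. Hence the component $\mc{M}_{n \cdot w(-d), B}(\XX)$ is sent to $\mc{M}_{n\cdot w(-d),DR}^0(\XX)$. On the Betti side this component is the open and closed substack where $C$ is conjugate to $\mathrm{e}^{-\frac{2\pi \mathrm{i}}{n}d}Id$, so its inclusion is algebraic.

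Pullback along an algebraic open and closed inclusion is a morphism of mixed Hodge structures. The cohomology of a disjoint union is the direct product of the cohomologies of the pieces. Therefore the statement of the theorem, that $\RH^* e_{i,j}$ has weight $2i$, restricts to the same statement for the component. The tautological classes on the component are the restrictions of the global $e_{i,j}$, because the universal bundle restricts to the universal bundle.

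\textbf{Step 2: passage to coarse moduli spaces.} By \cite[Lemma 3.1]{Heinloth-Gm-gerbes}, for both the Betti and the De Rham component the coarse map $\mc{M}\to M$ is a $\Gb_m$-gerbe, since we assume $n$ and $d$ coprime. It therefore induces $H^*(\mc{M})\cong H^*(M)[z]$. The pullback $H^*(M)\hookrightarrow H^*(\mc{M})$ is injective and algebraic, so it is compatible with weights, and $\RH$ commutes with the coarse maps.

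The main obstacle is that the universal bundle lives only on the stack: the $e_{i,j}$ on $M$ must be defined using a normalized universal bundle, for instance one twisted so that it descends, or with $z$-dependence removed. I would handle this by choosing the normalization on the stack through a tensor twist by a line bundle pulled back from $BGL_n$-type data. Such a twist is algebraic on the Betti side as well, through the universal morphism $\varphi_{\text{univ}}$ to $BGL_n$ used in the theorem. This keeps weights pure of degree $2i$.

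An injective morphism of mixed Hodge structures detects the weight. Consequently a class on $M_B$ whose image has pure weight $2i$ itself has weight $2i$.

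\textbf{Step 3: the generation and weight-$j$ statements.} These are recalled from the previous section. Markman's theorem, transported through \zcref{thm: equivalence for Higgs} and \zcref{thm: isomorphic cohomologies}, gives that the $e_{i,j}$ generate the cohomology ring. Purity of the Dolbeault cohomology then gives that each $e_{i,j}$ has weight $j$.
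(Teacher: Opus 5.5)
Your Steps 1 and 3 are the paper's argument. The paper gets this corollary in one line, by restricting the stack-level theorem to the open and closed $n \cdot w(-d)$ component. It moves between stacks and coarse spaces via $H^*(\mc{M},\Qb)\cong H^*(M,\Qb)[z]$ without discussing normalizations. You are right that this passage needs care: the Künneth components of $c_i(\mc{E}_{\text{univ}})$ on the stack involve $z$ and are not pulled back from $M$. However, the mechanism you propose in Step 2 fails as stated. A line bundle obtained from $BGL_n$ through $\varphi_{\text{univ}}$, restricted to a vertex $v$ of $\Delta_{\XX}$, is a power $(\det \mc{E}_v)^{k}$. The generic stabilizer $\Gb_m$ acts on it with weight $nk$. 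Twisting the weight-one bundle $\mc{E}_{\text{univ}}$ by it gives weight $1+nk\neq 0$ for $n\geq 2$, so no such twist descends to $M\times\XX$.

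The repair is to normalize rationally. Put $z'=\tfrac{1}{n}c_1(\det\mc{E}_v)\in H^2(\mc{M},\Qb)$. Then use the Chern classes of the formal twist $\mc{E}_{\text{univ}}\otimes(\det\mc{E}_v)^{-1/n}$, namely $c_i^{\mathrm{norm}}=\sum_{k\leq i}\binom{n-k}{i-k}c_k(\mc{E}_{\text{univ}})\,(-z')^{i-k}$, or equivalently $\mathrm{ch}(\mc{E}_{\text{univ}})\exp(-z')$.

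Their Künneth components lie in $H^*(M,\Qb)\subset H^*(\mc{M},\Qb)$. On the Betti side, $n z'$ is the first Chern class of the algebraic line bundle attached to the character $\det$ of $GL_n$, so $z'$ has weight $2$. Hence every term of $c_i^{\mathrm{norm}}$ has weight $2i$ by the theorem, and your injectivity argument then finishes the proof.

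This choice is not cosmetic. Suppose you normalize instead by an honest weight-one line bundle built from a determinant of cohomology, which is the usual source of a universal family on the coarse De Rham space. Its first Chern class contains $\int_{\XX}\mathrm{ch}_2(\mc{E}_{\text{univ}})\in H^2$, which has weight $4$ on the Betti side. Purity of weight $2i$ can then fail already for the $H^2\otimes H^0$ component of $c_1$.
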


Finally, we can compose with the isomorphisms between bundles of degree $0$ on $\XX$ and bundles of degree $d$ on $X$ proven in \zcref{section: Higgs bundles and Dolbeault moduli space, section: De Rham and Betti moduli spaces} to get an alternative proof of Shende's computation of the Hodge weights for twisted character varieties.

\begin{cor}
The non abelian Hodge isomorphism for degree $d$ vector bundles on $X$
\begin{equation}
H^*(M_{n, Dol}^d(X)) \xrightarrow{\cong} H^*(M_{n,B}^d(X))
\end{equation}
maps the tautological classes $e_{i,j}$ on $H^*(M_{n, Dol}^d(X))$, which have weight $j$ and generate the cohomology ring, to classes of weight $2i$.
\end{cor}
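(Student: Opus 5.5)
The final corollary follows by composing isomorphisms already established and tracking, at each stage, where the tautological classes go and what weights they carry.

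\textbf{Step 1: the Dolbeault side.} By \zcref{thm: equivalence for Higgs}, the twisted pullback identifies $M_{n, Dol}^d(X)$ with $M_{n\cdot w(-d), Dol}^0(\XX)$. Under this identification, the universal Higgs bundle on $M_{n,Dol}^d(X)\times X$ pulls back along $\id\times\pi$ and is twisted by $\O(-\tfrac{d}{n}p)$, giving the universal bundle on the $\XX$ side. Twisting by a line bundle pulled back from the curve factor changes each $c_i$ only by adding products of lower Chern classes with classes from $H^*(\XX)$. Hence the Künneth components on the two sides span the same subring, and their weights correspond. Moreover, $H^*(\XX,\Qb)\cong H^*(X,\Qb)$ via $\pi$, since $\pi$ is a coarse moduli map with finite stabilizers, so Künneth decompositions can be compared directly. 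The generation statement itself (Markman's theorem) and the purity of weight $j$ transfer along this isomorphism. I would include a sentence noting that the normalization of the universal bundle, which is only defined up to a line bundle twist from the moduli factor, does not affect the generated ring; the $\Gb_m$-gerbe remark handles this.

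\textbf{Step 2: Dolbeault to De Rham.} Apply \zcref{thm: isomorphic cohomologies}. The composite $j^*\circ(i^*)^{-1}$ is an isomorphism of pure Hodge structures. It sends Chern classes of $\mc{E}^{Dol}_{\text{univ}}$ to those of $\mc{E}^{DR}_{\text{univ}}$, because both are restrictions of the universal $\lambda$-connection bundle on the Hodge moduli space. The classes $e_{i,j}$ therefore go to the corresponding De Rham tautological classes, still of weight $j$.

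\textbf{Step 3: De Rham to Betti.} Apply $\RH^*$ together with the preceding corollary for the $n\cdot w(-d)$-component, which says these classes land in weight $2i$. Finally, identify $M_{n\cdot w(-d),B}(\XX)$ with the twisted character variety $M_{n,B}^d(X)$ via the corollary of \S\ref{section: De Rham and Betti moduli spaces}. This identification is an algebraic isomorphism of varieties, since both are given by the same equations in $GL_n^{2g}$, so it preserves weights.

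The main obstacle is Step 1. One must check carefully that the twisted pullback respects the Künneth components and their weights. The cleanest route is to observe that $c(\mc{E}\otimes L)$ is a polynomial in $c(\mc{E})$ and $c_1(L)$, with $c_1(L)\in H^2(\XX)$ of weight $2$. Combined with the ring isomorphism, this shows that the two sets of generators define the same ring and that weights are matched.
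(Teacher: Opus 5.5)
Your route is the paper's: compose the twisted pullback equivalence, the isomorphism $j^*\circ(i^*)^{-1}$ and $\RH^*$, then invoke the weight-$2i$ corollary for the $n\cdot w(-d)$-component. The paper's proof is exactly this one-line composition. It silently identifies the Künneth components of the degree-$d$ universal bundle $\mc{E}$ on $M_{n,Dol}^d(X)\times X$ with those of the twisted universal bundle $\mc{E}'=(\id\times\pi)^*\mc{E}\otimes\O(-\tfrac{d}{n}p)$ on the $\XX$ side.

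Your Step~1 tries to justify this identification, and that is where the gap is. The twist is harmless for generation and for the pure Dolbeault weights, but not for the Betti weight $2i$. Since $c_1(\O(\tfrac dn p))=\tfrac dn[\mathrm{pt}]$ and its square vanishes, you get $c_i(\mc{E})=c_i(\mc{E}')+(n-i+1)\tfrac dn\,c_{i-1}(\mc{E}')\,[\mathrm{pt}]$. So the components with $j=2i$ and $j=2i-1$ agree. Taking Künneth components against $[\mathrm{pt}]$, however,
\[
e_{i,2i-2}=e'_{i,2i-2}+\tfrac{(n-i+1)d}{n}\,e'_{i-1,2i-2}.
\]
By the preceding corollary, $\RH^*e'_{i,2i-2}$ has weight $2i$. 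But $\RH^*e'_{i-1,2i-2}$ has weight $2i-2$, because it is $c_{i-1}$ of the algebraic tautological bundle given by the fiber at the base point. This class is nonzero in general. For $i=2$ on the stack, the generic stabilizer $\Gb_m$ acts on the fiber with weight $1$, so $c_1$ restricts to $n$ times the generator of $H^2(B\Gb_m)$. Hence for $d\neq 0$ and $n\ge 2$ the class $\RH^*e_{2,2}$ has mixed weight. Your bookkeeping ``$c_1(L)$ has weight $2$'' is Dolbeault-side bookkeeping. In Shende's argument the curve factor is replaced by $\Delta_{\XX}$, whose cohomology has weight $0$, so the term $c_{i-1}(\mc{E}')\,c_1(L)$ contributes weight $2i-2$, not $2i$.

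To close the gap, define the tautological classes on the $X$ side as the Künneth components of $c_i(\mc{E}')$, i.e.\ of the twisted universal bundle. This is what the paper's composition implicitly does, and with that convention the corollary follows immediately from the $\XX$-side statement, with no Chern-class computation needed. Alternatively, use a normalization insensitive to line-bundle twists, such as $\mathrm{ch}(\mc{E})\,e^{-c_1(\mc{E})/n}$.

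For the literal untwisted classes, your argument gives weight $2i$ only for $j=2i$ and $j=2i-1$. For $j=2i-2$ it does so only after replacing $e_{i,2i-2}$ by $e_{i,2i-2}-\tfrac{(n-i+1)d}{n}\,e_{i-1,2i-2}$.
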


We remark that the previous result was already present in \cite[]{Shende-weights}. There it was obtained by using both the $GL_n$ and the $PGL_n$-character varieties to deduce the result about the twisted character variety, following \cite[]{MixedHodgePolynomials}. However, our approach avoids going through the $PGL_n$ case by viewing the twist as monodromy in $\XX$.

\appendix

\section{Seifert--Van Kampen Theorem}\label{appendix: Seifert--Van Kampen Theorem}

\begin{thm}\label{van kampen}
Let $\XX$ be a topological stack and suppose that it is connected, locally path connected and semilocally $1$-connected. Let $\mc{U}_1 \xhookrightarrow{i_1} \mc{X}$ and $\mc{U}_2 \xhookrightarrow{i_2} \mc{X}$ be open substacks such that $\mc{X} = \mc{U}_1 \cup \mc{U}_2$. Let $\mc{V} \coloneqq \mc{U}_1 \times_{\mc{X}} \mc{U}_2$. Suppose that $\mc{U}_1, \mc{U}_2$ and $\mc{V}$ are connected. Then, for $x \in \mc{V}$, there is an isomorphism
\[
\pi_1(\mc{X}, x) \cong \pi_1(\mc{U}_1, x) *_{\pi_1(\mc{V}, x)} \pi_1(\mc{U}_2, x).
\]
Equivalently, the following diagram is cocartesian
\[\begin{tikzcd}
	{{\pi_1(\mc{V}, x)}} & {\pi_1(\mc{U}_1, x)} \\
	{\pi_1(\mc{U}_2, x)} & {\pi_1(\mc{X}, x).}
	\arrow[from=1-1, to=1-2]
	\arrow[from=1-1, to=2-1]
	\arrow[from=1-2, to=2-2]
	\arrow[from=2-1, to=2-2]
	\arrow["\lrcorner"{anchor=center, pos=0.125, rotate=180}, draw=none, from=2-2, to=1-1]
\end{tikzcd}\]
\end{thm}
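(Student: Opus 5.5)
Following the approach the paper points to, I will mimic the Galois-category proof of Seifert--Van Kampen (as in \cite[Theorem 4.7.7]{Douady2020}), using that by \cite[18.13, 18.19, 18.22]{Noohi-foundations} each of $\XX$, $\mc{U}_1$, $\mc{U}_2$, $\mc{V}$ has fundamental group equal to the automorphism group of the fiber functor at $x$ on its category of covering stacks. By Noohi this gives an equivalence $\Cov(\mc{Y}) \simeq \pi_1(\mc{Y},x)\text{-}\Set$ for each of them. The hypotheses on $\mc{U}_1,\mc{U}_2,\mc{V}$ (connected, and locally path connected and semilocally $1$-connected, being open in $\XX$) make this apply to every piece.

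The first and main step is descent for coverings. Restriction along $i_1$, $i_2$ should induce an equivalence
\begin{equation}
\Cov(\XX) \xrightarrow{\simeq} \Cov(\mc{U}_1) \times_{\Cov(\mc{V})} \Cov(\mc{U}_2),
\end{equation}
where the right side is the 2-fibered product: triples $(E_1, E_2, \alpha)$ with $\alpha\colon E_1|_{\mc{V}} \cong E_2|_{\mc{V}}$. Full faithfulness follows because morphisms of covering stacks over $\XX$ form a sheaf for the open cover $\{\mc{U}_1,\mc{U}_2\}$. Essential surjectivity follows because covering stacks are representable over the base and so satisfy effective descent for open covers. One glues $E_1$ and $E_2$ along $\alpha$ (no cocycle condition beyond $\alpha$ is needed with two opens) and checks that the result is a covering. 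Being a covering is local on the base: it can be tested on an atlas, and there it reduces to the classical statement for spaces.

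This is the step I expect to be the main obstacle. Gluing requires working with the topological stack via an atlas $X_0 \to \XX$ and its groupoid presentation. It also requires checking that Noohi's notion of covering map of topological stacks is stable under this gluing. I would reduce to the case of topological groupoids, where open subgroupoids, restriction and gluing of equivariant coverings are explicit.

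Second, translate to group actions. Since fiber functors are compatible with restriction, the equivalence becomes
\begin{equation}
\pi_1(\XX,x)\text{-}\Set \simeq \pi_1(\mc{U}_1,x)\text{-}\Set \times_{\pi_1(\mc{V},x)\text{-}\Set} \pi_1(\mc{U}_2,x)\text{-}\Set,
\end{equation}
and this equivalence commutes with the forgetful functors to $\Set$, since each action lives on the fiber over $x$. Take $E_1,E_2$ on the same set $F$ with $\alpha$ the identity, which is possible by transporting along $\alpha$. The right-hand side is then exactly the category of sets with an action of $P=\pi_1(\mc{U}_1,x) *_{\pi_1(\mc{V},x)} \pi_1(\mc{U}_2,x)$, because an action of a pushout is the same as two actions agreeing on $\pi_1(\mc{V},x)$.

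Third, conclude. We now have $\pi_1(\XX,x)\text{-}\Set \simeq P\text{-}\Set$ compatibly with the forgetful functors, and the induced map is the canonical $P \to \pi_1(\XX,x)$. Taking automorphisms of the forgetful functor recovers each group: apply Yoneda with the regular representation, noting that $P$ may be infinite so this must be done with discrete groups and arbitrary sets, not only finite ones. The canonical map is therefore an isomorphism, which is the cocartesian square.
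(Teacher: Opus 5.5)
Your proposal is correct and follows essentially the same route as the paper: restriction gives $\Cov_{\XX}\simeq\Cov_{\mc{U}_1}\times_{\Cov_{\mc{V}}}\Cov_{\mc{U}_2}$, and Noohi's fiber-functor equivalences turn this into the statement that restriction along the canonical map $v\colon \pi_1(\mc{U}_1,x)*_{\pi_1(\mc{V},x)}\pi_1(\mc{U}_2,x)\to\pi_1(\XX,x)$ is an equivalence of categories of sets with action, so $v$ is an isomorphism; you only add detail on the gluing of coverings (which the paper just asserts), and you prove via automorphisms of the forgetful functor the fact the paper cites as \cite[Proposition 4.7.1]{Douady2020}. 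One correction, which equally applies to the paper's silent use of Noohi's theorem for the pieces: semilocal $1$-connectedness is not inherited by open substacks (the cone on the Hawaiian earring is contractible and locally path connected, but removing its apex leaves an open subset that is not semilocally simply connected), so it must be assumed separately for $\mc{U}_1$, $\mc{U}_2$, $\mc{V}$, which is harmless in the orbifold situation where the theorem is applied.
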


\begin{proof}
In order to simplify the notation, let $G\coloneqq \pi_1(\mc{X}, x)$, $H \coloneqq \pi_1(\mc{V}, x)$ and $G_l \coloneqq \pi_1(\mc{U}_l, x)$ for $l=1,2$. The inclusions
\[\begin{tikzcd}
	{\mc{V}} & {\mc{U}_l} & {\mc{X}}
	\arrow["{j_l}", hook, from=1-1, to=1-2]
	\arrow["j", curve={height=-18pt}, hook, from=1-1, to=1-3]
	\arrow["{i_l}", hook, from=1-2, to=1-3]
\end{tikzcd}\]
give rise to a commutative diagram
\[\begin{tikzcd}
	H & {G_1} \\
	{G_2} & {G_1 *_H G_2} \\
	&& G,
	\arrow["{j_{1*}}", from=1-1, to=1-2]
	\arrow["{j_{2*}}"', from=1-1, to=2-1]
	\arrow["\iota_1", from=1-2, to=2-2]
	\arrow["{i_{1*}}", curve={height=-12pt}, from=1-2, to=3-3]
	\arrow["\iota_2", from=2-1, to=2-2]
	\arrow["{i_{2*}}"', curve={height=12pt}, from=2-1, to=3-3]
	\arrow["v", dashed, from=2-2, to=3-3]
\end{tikzcd}\]
where the dashed arrow can be filled out by the universal property of $G_1 *_H G_2$. We want to prove that $v$ is an isomorphism.

Consider the following $2$-commutative diagram
\[\begin{tikzcd}
	{\Cov_{\mc{X}}} &&& {G\mh\Set} \\
	\\
	{\Cov_{\mc{U}_1} \times_{\Cov_{\mc{V}}} \Cov_{\mc{U}_2}} &&& {G_1\mh\Set \times_{H\mh\Set} G_2\mh\Set,}
	\arrow["{{\theta = F_x^{\mc{X}}}}", from=1-1, to=1-4]
	\arrow["\alpha", from=1-1, to=3-1]
	\arrow["\beta"', from=1-4, to=3-4]
	\arrow["{{\varphi =F_x^{\mc{U}_1} \, \times_{F_x^{\mc{V}}} \, F_x^{\mc{U}_2}}}", from=3-1, to=3-4]
\end{tikzcd}\]
where $\alpha$ is induced by the restriction of covering spaces and $\beta$ is induced by $i_{1*}$ and $i_{2*}$.
Notice that $\alpha$ is an equivalence of categories, obtained by the gluing of covering spaces, $\varphi$ and $\theta$ are equivalences, as a consequence of \cite[Theorem 18.19]{Noohi-foundations}. Therefore, $\beta$ is an equivalence as well.

Notice moreover that the homomorphisms $(\iota_1, \iota_2)$ induce an equivalence of categories
\[
\gamma \colon (G_1 *_H G_2)\mh\Set \, \to \, G_1\mh\Set \, \times_{H\mh\Set} \,G_2\mh\Set,
\]
making the following diagram $2$-commute
\[\begin{tikzcd}
	{G\mh\Set} \\
	& {(G_1 *_H G_2)\mh\Set} \\
	{G_1\mh\Set \times_{H-set} G_2\mh\Set.}
	\arrow["{v^*}", from=1-1, to=2-2]
	\arrow["\beta"', from=1-1, to=3-1]
	\arrow["\gamma", from=2-2, to=3-1]
\end{tikzcd}\]
Since both $\beta$ and $\gamma$ are equivalences, this shows that $v^*$ is also an equivalence and therefore, see \cite[Proposition 4.7.1]{Douady2020}, that $v$ is an isomorphism of groups.
\end{proof}

\printbibliography
\end{document}